\theoremstyle{plain}
\newtheorem{theorem}{Theorem}[section]
\newtheorem{prop}[theorem]{Proposition}
\newtheorem{cor}[theorem]{Corollary}
\newtheorem{lemma}[theorem]{Lemma}
\newtheorem{conj}[theorem]{Conjecture}
\theoremstyle{definition}
\newtheorem{ex}[theorem]{Example}
\newtheorem{remark}[theorem]{Remark}
\newtheorem{notation}[theorem]{Notation}
\renewcommand{\a}[0]{\mathbf a}
\newcommand{\C}[0]{\mathbb C}
\newcommand{\Cu}[0]{\mathcal C}
\renewcommand{\d}[0]{\mathbf d}
\newcommand{\E}[0]{\mathcal E}
\newcommand{\I}[0]{\mathcal I}
\renewcommand{\L}[0]{\mathbb L}
\newcommand{\lbeta}[0]{\bar{\beta}}
\newcommand{\N}[0]{\mathbb N}
\renewcommand{\P}[0]{\mathbb P}
\newcommand{\Q}[0]{\mathbb Q}
\newcommand{\x}[0]{\mathbf x}
\newcommand{\bxi}[0]{\mathbf \xi}
\DeclareMathOperator{\lcm}{lcm}
\title[The monodromy conjecture for a space monomial curve]
{The monodromy conjecture for a space monomial curve with a plane semigroup}
\author[J.~Mart\'{\i}n-Morales]{Jorge Mart\'{\i}n-Morales}
\address[J.~Mart\'{\i}n-Morales]{Centro Universitario de la Defensa, IUMA \\
Academia General Militar \\ 
Ctra.~de Huesca s/n. \\ 
50090 Zaragoza, Spain}
\email{jorge@unizar.es}
\author[W.~Veys]{Willem Veys}
\author[L.~Vos]{Lena Vos}
\address[W.~Veys \and L. Vos]{
KU Leuven \\
Departement Wiskunde \\
Celestijnenlaan 200B, bus 2400 \\
3001 Leuven, Belgium}
\email{wim.veys@kuleuven.be, lena.vos@kuleuven.be}
\thanks{The first author is partially supported by MTM2016-76868-C2-2-P from the Departamento de Industria
e Innovaci\'on del Gobierno de Arag\'on and Fondo Social Europeo E22 17R Grupo Consolidado \'Algebra y Geometr\'ia, and by FQM-333 from Junta de Andaluc\'ia. The second author is partially supported by the Research Foundation - Flanders (FWO) project G.0792.18N. The third author is supported by a PhD Fellowship of the Research Foundation - Flanders (no. 71587).}
\begin{document}

\begin{abstract}
This article investigates the monodromy conjecture for a space monomial curve that appears as the special fiber of an equisingular family of curves with a plane branch as generic fiber. Roughly speaking, the monodromy conjecture states that every pole of the motivic, or related, Igusa zeta function induces an eigenvalue of monodromy. As the poles of the motivic zeta function associated with such a space monomial curve have been determined in earlier work, it remains to study the eigenvalues of monodromy. After reducing the problem to the curve seen as a Cartier divisor on a generic embedding surface, we construct an embedded $\Q$-resolution of this pair and use an A'Campo formula in terms of this resolution to compute the zeta function of monodromy. Combining all results, we prove the monodromy conjecture for this class of monomial curves.    
\end{abstract}

\footnote{\emph{2020 Mathematics Subject Classification.} Primary: 14E15; 
Secondary: 14H20, 
14J17,  
32S40.}  
\footnote{\emph{Key words and phrases.} monodromy conjecture, zeta functions, resolution of singularities, weighted blow-ups, curve singularities.}

\maketitle

\setcounter{tocdepth}{1}
\tableofcontents

\section*{Introduction} \label{Intro}

The classical \emph{monodromy conjecture} predicts a relation between two invariants of a polynomial, one originating from number theory and the other from differential topology. More precisely, it states that the poles of the motivic, or related, Igusa zeta function of a polynomial $f \in \C[x_0, \ldots, x_n]$ induce eigenvalues of the local monodromy action of $f$, seen as a function $f:\C^{n+1}\rightarrow \C$, on the cohomology of its Milnor fiber at some point $x \in f^{-1}(0) \subset \C^{n+1}$. Generalizing the motivic Igusa zeta function to an ideal and using the notion of Verdier monodromy, one can similarly formulate the monodromy conjecture for ideals. To date, both conjectures have only been proven in full generality for polynomials and ideals in two variables, see~\cite{L} and~\cite{VV}, respectively. In higher dimension, various partial results were shown for one polynomial (see for instance the introduction of~\cite{BV} for a list of references), but for multiple polynomials, the most general result so far is a proof for monomial ideals~\cite{HMY}. Very recently, Musta\c{t}\u{a}~\cite{Mu} showed that the monodromy conjecture for polynomials implies the one for general ideals. However, since the monodromy conjecture for one polynomial is still open in more than two variables, this does not provide an immediate solution of the monodromy conjecture for ideals. In the present article, the monodromy conjecture is investigated for a class of binomial ideals in arbitrary dimension that define space curves deforming to plane branches. As the poles of the motivic Igusa zeta function associated with these binomial ideals have already been studied in~\cite{MVV}, we concentrate on the eigenvalues of monodromy. A short summary of the main results of the present article and of~\cite{MVV} can be found in~\cite{MMVV}.

\vspace{14pt}

To construct the ideals of our interest, we start with a germ $\Cu:=\{f=0\}\subset (\C^2,0)$ of a complex plane curve defined by an irreducible series $f\in\C[[x_0,x_1]]$ with $f(0) = 0$. The semigroup $\Gamma(\Cu)$ of $\Cu$ is the image of the associated valuation \[\nu_{\C}: \frac{\C[[x_0,x_1]]}{(f)}\setminus \{0\} \longrightarrow \N: h \mapsto \dim_{\C}\frac{\C[[x_0,x_1]]}{(f,h)}.\] This semigroup is finitely generated and has a unique minimal generating set $(\lbeta_0,\ldots,\lbeta_g)$. Define $Y$ as the image of the monomial map $M:(\C,0) \rightarrow (\C^{g+1},0)$ given by $t\mapsto (t^{\bar{\beta}_0},\ldots,t^{\bar{\beta}_g}).$ This is an irreducible curve which is smooth outside the origin and whose semigroup is the \lq plane\rq\ semigroup $\Gamma(\Cu)$. Furthermore, it is the special fiber of an equisingular family $\eta:(\chi,0) \subset (\C^{g+1}\times \C,0)\rightarrow (\C,0)$ with generic fiber isomorphic to $\Cu$. The ideal $\I \subset \C[x_0,\ldots,x_g]$ defining $Y$ in $\C^{g+1}$ is generated by binomial equations of the form 
	\[\left\{ 
	\begin{array}{r c l l}
	f_1 := x_1^{n_1} & - & x_0^{n_0}  &  = 0 \\
	f_2 := x_2^{n_2}  &- & x_0^{b_{20}}x_1^{b_{21}} &= 0  \\
	& \vdots & &\\
	f_g := x_g^{n_g} &- & x_0^{b_{g0}}x_1^{b_{g1}}\cdots x_{g-1}^{b_{g(g-1)}} & = 0.\\
 	 \end{array}
	\right.\]
Here, $n_i > 1$ and $b_{ij} \geq 0$ are integers that can be expressed in terms of $(\bar{\beta}_0,\ldots, \bar{\beta}_g)$, see~\eqref{eq:ni-betai}. The curve $Y$ is called the \emph{monomial curve associated with $\Cu$}, but, to simplify the notation, we will refer to it as a \emph{(space) monomial curve} $Y \subset \C^{g+1}$. In this article, the case of interest is $g \geq 2$.

\vspace{14pt}

In~\cite{MVV}, it was shown that a complete list of poles of the motivic zeta function associated with a space monomial curve $Y \subset \C^{g+1}$ is given by \[\L^g, \qquad \L^{\frac{\nu_k}{N_k}}, \qquad k = 1,\ldots, g,\] where \[\frac{\nu_k}{N_k} = \frac{1}{n_k\lbeta_k}\bigg(\sum_{l=0}^k \lbeta_l - \sum_{l=1}^{k-1}n_l\lbeta_l\bigg) + (k-1) + \sum_{l=k+1}^g\frac{1}{n_l}.\] Here, $\L$ denotes the class of the affine line in the \emph{Grothendieck ring} of complex varieties.

\vspace{14pt}

It thus remains to investigate the monodromy eigenvalues of a space monomial curve $Y \subset \C^{g+1}$ and to show that every pole in the above list yields such an eigenvalue. To this end, we will make use of the following \emph{A'Campo formula} for the monodromy eigenvalues in terms of a principalization $\varphi: \tilde X \rightarrow \C^{g+1}$ of the ideal $\I$ defining $Y$. Let $E_j$ for $j \in J$ be the irreducible components of $\varphi^{-1}(Y)$, and denote by $N_j$ and $\nu_j - 1$ the multiplicity of $E_j$ in the divisor of $\varphi^{\ast}\I$ and $\varphi^{\ast}(dx_0 \wedge \cdots \wedge dx_g)$, respectively. Let $\sigma: X' \rightarrow \C^{g+1}$ be the blow-up of $\C^{g+1}$ along $Y$ with exceptional divisor $E' := \sigma^{-1}(Y)$. By the universal property of the blow-up, there exists a unique morphism $\psi: \tilde X \rightarrow X'$ such that $\sigma \circ \psi = \varphi$. Then, from~\cite{VV}, a complex number is a monodromy eigenvalue associated with $Y$ if and only if it is a zero or pole of the \emph{zeta function of monodromy} at a point $e \in E'$ given by 
\begin{equation}\label{eq:ACampo-princ}
Z_{Y,e}^{mon} (t)= \prod_{j \in J}(1-t^{N_j})^{\chi(E^{\circ}_j \cap \psi^{-1}(e))},
\end{equation}
where $\chi$ denotes the topological Euler characteristic and $E_j^\circ := E_j \backslash \cup_{i \neq j}(E_i\cap E_j)$ for every $j \in J$. This is a generalization of the original formula of A'Campo~\cite{AC} expressing the monodromy eigenvalues of one polynomial $f \in \C[x_0,\ldots, x_g]$ in terms of an embedded resolution $\varphi: \tilde X \rightarrow \C^{g+1}$ of $\{f = 0\}$, see~\eqref{eq:ACampo-poly}. Both A'Campo formulas can be generalized in a straightforward way to ideals and polynomials, respectively, defining a subscheme $Y$ of a general variety $X$ with $\text{Sing}(X) \subset Y$.

\vspace{14pt}

We will apply formula~\eqref{eq:ACampo-princ} to a specific point in the exceptional divisor $E'$ that we define by means of a \emph{generic embedding surface} of $Y$. For every set $(\lambda_2, \ldots, \lambda_g) $ of $g-1$ non-zero complex numbers, we introduce an affine scheme $S(\lambda_2,\ldots, \lambda_g)$ in $\C^{g+1}$ given by the equations
\[\left\{ 
	\begin{array}{c c l l l}
	f_1& + & \lambda_2f_2  &  = 0 \\
	f_2& + & \lambda_3f_3 &= 0  \\
	& \vdots & &\\
	f_{g-1} & +  & \lambda_gf_g & = 0.\\
    \end{array}
 	\right.\]
Every such scheme contains $Y$ as a Cartier divisor defined by one of the equations $f_i = 0$. For \emph{generic} coefficients $(\lambda_2,\ldots, \lambda_g)$, the scheme $S(\lambda_2,\ldots, \lambda_g)$ is a normal surface which is smooth outside the origin. If we denote by $S'$ the strict transform of such a generic embedding surface $S := S(\lambda_2,\ldots, \lambda_g)$ under the blow-up $\sigma$, then our interest goes to the monodromy zeta function $Z_{Y,p}^{mon}(t)$ at the point $p := S' \cap \sigma^{-1}(0)$. Using the above A'Campo formulas, it turns out that, for generic coefficients, $Z_{Y,p}^{mon}(t)$ is equal to the monodromy zeta function $Z_{Y,0}^{mon}(t)$ of $Y$ considered on $S$ at the origin; this will be shown in Theorem~\ref{thm:red-to-on-curve-surface}. In fact, this result will be stated and proven in a more general context, which makes it possibly useful for other instances of the monodromy conjecture.

\vspace{14pt}

To compute the monodromy zeta function $Z_{Y,0}^{mon}(t)$ of $Y \subset S$ at the origin, we will consider another generalization of A'Campo's formula in terms of an \emph{embedded $\Q$-resolution} of $Y \subset S$ that was proven in~\cite{Ma1}. Roughly speaking, a $\Q$-resolution is a resolution in which the final ambient space is allowed to have abelian quotient singularities, and the zeta function of monodromy at the origin can be written as \[Z^{mon}_{Y,0}(t) = \prod_{\underset{1 \leq l \leq s}{1 \leq j \leq r}} \left(1-t^{m_{j,l}}\right)^{\chi(E_{j,l}^{\circ})},\] where $\{E_{j,l}\}_{j=1,\ldots,r,l = 1,\ldots, s}$ is a finite stratification of the exceptional varieties $E_1,\ldots,E_r$ of the $\Q$-resolution such that the \emph{multiplicity} $m_{j,l}$ of $E_j$ along each $E_{j,l}$ is constant. To construct an embedded $\Q$-resolution of $Y \subset S$, we will compute $g$ \emph{weighted blow-ups}. After each blow-up, we will be able to eliminate one variable so that we obtain a situation very similar to the one we have started with, but with one equation in $Y$ and $S$ less. Therefore, in the last step, the problem will have been reduced to the resolution of a cusp in a Hirzebruch-Jung singularity of type $\frac{1}{d}(1,q)$, which can be solved with a single weighted blow-up. One can compare this process to the resolution of an irreducible plane curve with $g$ Puiseux pairs using toric modifications; after each weighted blow-up, the number of Puiseux pairs is lowered by one, and the last step coincides with the resolution of an irreducible plane curve with one Puiseux pair. Our case, however, will be more challenging as the strict transform of $Y$ after the first blow-up will pass in general through the singular locus of the ambient space. The resulting $\Q$-resolution is described in Theorem~\ref{thm:resolutionY}, and its resolution graph is a tree as in Figure~\ref{fig:dual-graph}. Stratifying the exceptional divisor of the resolution such that the multiplicity is constant along each stratum and computing the Euler characteristics of the strata yields \[Z^{mon}_{Y,0}(t) = \frac{\prod\limits_{k = 0}^g(1-t^{M_k})^{\frac{\lbeta_k}{M_k}}}{\prod\limits_{k = 1}^g(1-t^{N_k})^{\frac{n_k\lbeta_k}{N_k}}},\] where \[M_k := \lcm\Big(\frac{\lbeta_k}{\gcd(\lbeta_0,\ldots, \lbeta_k)},n_{k+1},\ldots, n_g\Big), \quad k = 0,\ldots, g,\] and \[N_k := \lcm\Big(\frac{\lbeta_k}{\gcd(\lbeta_0,\ldots, \lbeta_k)},n_k,\ldots, n_g\Big), \quad k = 1,\ldots, g.\] It follows that the monodromy zeta function $Z_{Y,p}^{mon}(t)$ of $Y \subset \C^{g+1}$ at $p = S' \cap \sigma^{-1}(0)$ is given by the same expression, see Theorem~\ref{thm:zeta-function-mon-Y}. 

\vspace{14pt}

With this expression for $Z_{Y,p}^{mon}(t)$, we will be able to prove (both the local and global version of) the monodromy conjecture for a space monomial curve $Y \subset \C^{g+1}$. More precisely, in Theorem~\ref{thm:mon-conj}, we will show for every pole $\L^{\frac{\nu_k}{N_k}}$ with $\frac{\nu_k}{N_k} \notin \N$ that $e^{-2\pi i\frac{\nu_k}{N_k}}$ is a pole of $Z^{mon}_{Y,p}(t)$. It follows that every pole $\L^{-s_0}$ of the motivic Igusa zeta function associated with $Y$ indeed yields a monodromy eigenvalue $e^{2\pi is_0}$ of $Y$. 

\vspace{14pt}

We end the introduction with fixing some notation used throughout this article. We let $\N$ be the set of non-negative integers. The greatest common divisor and lowest common multiple of a set of integers $m_1,\ldots,m_r \in \mathbb Z$ is denoted by $\gcd(m_1,\ldots, m_r)$ and $\lcm(m_1,\ldots,m_r)$, respectively. To shorten the notation, we will sometimes use $(m_1,\ldots, m_r)$ for the greatest common divisor. A useful relation between these two numbers for $m_1,\ldots, m_r$ a set of non-zero integers and $m$ a common multiple is
\begin{equation} \label{eq:rel-gcd-lcm}
	\gcd\Big(\frac{m}{m_1}, \ldots, \frac{m}{m_r} \Big) = \frac{m}{\lcm(m_1,\ldots, m_r)}.
\end{equation}
Finally, by a \emph{complex variety}, we mean a reduced separated scheme of finite type over $\C$, which is not necessarily irreducible. A \emph{curve} is a variety of dimension one, and a \emph{surface} a variety of dimension two.

\vspace{14pt}

\textbf{Acknowledgement.} 
We would like to thank Hussein Mourtada for the suggestion to investigate the monodromy conjecture for the class of space monomial curves with a plane semigroup. We would also like to warmly thank the referees for their useful and constructive comments. The first author would also like to thank the Fulbright Program (within the Jos\'e Castillejo grant by Ministerio de Educaci\'on, Cultura y Deporte) for its financial support while writing this paper and the University of Illinois at Chicago, especially Lawrence Ein, for the warm welcome and support in hosting him.


\section{Space monomial curves with a plane semigroup} \label{SpaceMonomial}

We start this article by introducing the class of monomial curves we are interested in. They arise in a natural way as the special fibers of equisingular families of curves whose generic fibers are isomorphic to a plane branch. More precisely, let ${\Cu := {\{f = 0\}} \subset (\C^2,0)}$ be the germ at the origin of an irreducible plane curve defined by a complex irreducible series $f \in \C[[x_0,x_1]]$ with $f(0)=0$. Carrying out a linear change of variables if necessary, we can assume that the curve $\{x_0=0\}$ is transversal to $\Cu$ and that the curve $\{x_1 = 0\}$ has maximal contact (among all smooth curves) with $\Cu$. For $h \in \C[[x_0,x_1]]$, the \emph{local intersection multiplicity} of $\Cu$ and the curve $\{h = 0\}$ is defined as \[(f,h)_0 := \dim_\C \frac{\C[[x_0,x_1]]}{(f,h)}.\] This induces a valuation \[\nu_\Cu: \frac{\C[[x_0,x_1]]}{(f)} \setminus \{0\} \longrightarrow \N: h \mapsto (f,h)_0.\] The image of this valuation is called the \emph{semigroup} of $\Cu$ and denoted by $\Gamma(\Cu)$. Because $\N \setminus \Gamma(\Cu)$ is finite, there exists a unique minimal system of generators $(\bar{\beta}_0,\ldots,\bar{\beta}_g)$ of $\Gamma(\Cu)$ satisfying $\lbeta_0 < \cdots < \lbeta_g$ and $\gcd(\lbeta_0, \ldots, \lbeta_g) = 1$, see for instance~\cite{Z}. Additionally, we introduce the integers $e_i:=\gcd(\bar{\beta}_0,\ldots,\bar{\beta}_i)$ for $i=0,\ldots,g$ and $n_i:=\frac{e_{i-1}}{e_i}$ for $i=1,\ldots,g.$ From the minimality of the generators $(\bar{\beta}_0,\ldots,\bar{\beta}_g)$, one can easily see that ${\lbeta_0 = e_0 > e_1 > \cdots > e_g = 1}$ and that $n_i \geq 2$ for all $i = 1,\ldots, g$. One can also show that every $n_i\bar{\beta}_i$ for $i=1,\ldots,g$ is contained in the semigroup generated by $\bar{\beta}_0,\ldots,\bar{\beta}_{i-1}$; this follows for example from~\cite{Az}. In other words, for each $i = 1, \ldots, g$, we can find non-negative integers $b_{ij}$ for $0 \leq j < i$ such that
	\begin{equation}\label{eq:ni-betai}
		n_i\bar{\beta}_i=b_{i0}\bar{\beta}_0+\cdots +b_{i(i-1)}\bar{\beta}_{i-1}.
	\end{equation}
If we require in addition that $b_{ij}<n_j$ for $j \neq 0$, then these integers are unique. For later purposes, we denote $n_0 := b_{10}$ and list some other properties used in this article: 
\begin{enumerate}
	\item[(i)] for $i = 0,\ldots, g-1$, we have that $e_i = n_{i+1}\cdots n_g$; 
	\item[(ii)] for $i = 0,\ldots, g-1$, we have that $n_j \mid \lbeta_i$ for all $j > i$;
	\item[(iii)] for $i = 1,\ldots, g$, we have that $\gcd(\frac{\lbeta_i}{e_i},n_i) = \gcd(\frac{\lbeta_i}{e_i},\frac{e_{i-1}}{e_i}) = 1$, and, in particular, that $\gcd(n_0,n_1) = \gcd(\frac{\lbeta_1}{e_1},n_1) = 1$; and
	\item[(iv)] for $i = 1,\ldots, g$, we have that $n_i\lbeta_i < \lbeta_{i+1}$.
\end{enumerate} 
In terms of the generators $(\lbeta_0,\ldots, \lbeta_g)$, the curve we will consider is defined as the image of the monomial map $M:(\C,0) \rightarrow (\C^{g+1},0)$ given by $t \mapsto (t^{\bar{\beta}_0},\ldots,t^{\bar{\beta}_g})$. We denote this curve by $Y$ and call it the \emph{monomial curve associated with} $\Cu$. It is an irreducible (germ of a) curve with $\Gamma(\Cu)$ as semigroup and which is smooth outside the origin, see~\cite{T1} for these and other properties of $Y$.

\vspace{14pt}

We can construct $Y$ as a deformation of $\Cu$ as follows. First of all, we can consider a \emph{system of approximate roots} or a \emph{minimal generating sequence} $(x_0, \ldots, x_g)$ of the valuation $\nu_{\C}$, which consists of elements $x_i \in \C[[x_0,x_1]]$ for $i=0,\ldots,g$ such that $\nu_{\C}(x_i)=\bar{\beta}_i$, see for instance~\cite{AM},~\cite{Sp} and~\cite{T1}.  For $i=0,1$, this condition is equivalent to the above assumptions on $x_0$ and $x_1$, respectively. These elements satisfy equations of the form \[x_{i+1}=x_i^{n_i}-c_ix_0^{b_{i0}}\cdots x_{i-1}^{b_{i(i-1)}}- \sum_{\gamma=(\gamma_0,\ldots,\gamma_i)} c_{i,\gamma }x_0^{\gamma_0}\cdots x_i^{\gamma_i}, \qquad  i = 0, \ldots, g,\] where $x_{g+1}=0, c_i \in \C\setminus \{0\},  c_{i,\gamma }\in \C,$ $0\leq \gamma_j <n_j$ for $1\leq j \leq i$, and $\sum_{j=0}^i \gamma_j\bar{\beta}_{j}>n_i\bar{\beta}_{i}.$ These equations realize $\Cu$ as a complete intersection in $(\C^{g+1},0)$. Even more, this complete intersection is Newton non-degenerate in the sense of~\cite{AGS} and~\cite{Te1}. It was proven (resp. conjectured) that such an embedding always exists in characteristic $0$~\cite{Te2} (resp. in positive characteristic~\cite{T2}). We now consider the following slight modification of the above equations in the variables $x_0,\ldots, x_g$ including an extra variable $v$: \[vx_{i+1}=x_i^{n_i}-c_ix_0^{b_{i0}}\cdots x_{i-1}^{b_{i(i-1)}}- \sum_{\gamma=(\gamma_0,\ldots,\gamma_i)} c_{i,\gamma }vx_0^{\gamma_0}\cdots x_i^{\gamma_i}, \qquad  i = 0, \ldots, g.\] For varying $v$ in $(\C,0)$, these equations define a family of germs of curves in $(\C^{g+1} \times \C,0)$, which is \emph{equisingular} for instance in the sense that $\Gamma(\Cu)$ is the semigroup of all curves in the family. We denote this family by $(\chi,0)$ and let $\eta:(\chi,0)\rightarrow (\C,0)$ be the restriction of the projection onto the second factor $(\C^{g+1}\times \C,0)\rightarrow (\C,0)$. The generic fiber $\eta^{-1}(v)$ for $v \neq 0$ is isomorphic to $\Cu$, and the special fiber $Y = \eta^{-1}(0)$ is defined in $(\C^{g+1},0)$ by the equations $x_i^{n_i} - c_ix_0^{b_{i0}} \cdots x_{i-1}^{b_{i(i-1)}} = 0$ for $i = 1, \ldots, g$. The coefficients $c_i$ are needed to see that any irreducible plane branch is a (equisingular) deformation of a such a curve. However, for simplicity, we will assume that every $c_i = 1$, which is always possible after a suitable change of coordinates. 

\vspace{14pt}

Clearly, we can also consider the global curve in $\C^{g+1}$ defined by the above binomial equations; from now on, we define a \emph{(space) monomial curve} $Y \subset \C^{g+1}$ as the complete intersection curve given by
	\begin{equation} \label{eq:equations-Y}
		\left\{\begin{array}{r c l l}
		f_1:= x_1^{n_1} & - & x_0^{n_0}  &  = 0 \\
		f_2:= x_2^{n_2}  &- & x_0^{b_{20}}x_1^{b_{21}} &= 0  \\
		& \vdots & &\\
		f_g := x_g^{n_g} &- & x_0^{b_{g0}}x_1^{b_{g1}}\cdots x_{g-1}^{b_{g(g-1)}} & = 0.\\
    	\end{array}\right.
	\end{equation}
This is still an irreducible curve which is smooth outside the origin. As such a monomial curve for $g = 1$ is just a cusp in the complex plane, of which the monodromy conjecture is well known, we will assume that $g \geq 2$. 


\section{The monodromy conjecture for ideals} \label{MonodromyGeneral}

This section provides a short introduction to the monodromy conjecture for ideals. Let $\I = (f_1, \ldots, f_r)$ be a non-trivial ideal in $\C[x_0, \ldots, x_n]$ and let $Y := V(\I)$ be its associated subscheme in the affine space $\C^{n+1}$. Assume that $Y$ contains the origin.

\vspace{14pt}

An important notion needed to introduce the monodromy conjecture for $\I$ is a \emph{principalization} (or \emph{log-principalization, log-resolution, monomialization}) of an ideal, which is a generalization of an \textit{embedded resolution} of a hypersurface. By Hironaka's Theorem~\cite{Hi}, a sequence of blow-ups can be used to transform a general ideal $\I = (f_1, \ldots, f_r)$ into a locally principal and monomial ideal. More formally, a \emph{principalization} of $\I$ is a proper birational morphism $\varphi: \tilde{X} \rightarrow \C^{n+1}$ from a smooth variety $\tilde{X}$ to $\C^{n+1}$ such that the total transform $\varphi^{\ast}\I$ is a locally principal and monomial ideal with support a simple normal crossings divisor, and such that the exceptional locus (or exceptional divisor) of $\varphi$ is contained in the support of $\varphi^{\ast}\I$. 

\vspace{14pt}

The motivic Igusa zeta function associated with $\I$ can be expressed in terms of a principalization $\varphi: \tilde{X} \rightarrow \C^{n+1}$ of $\I$ as follows. Let $E_j$ for $j \in J$ be the irreducible components (with their reduced scheme structure) of the total transform $\varphi^{-1}(Y)$. Among these, the components of the exceptional divisor are called the \emph{exceptional varieties}; the other components are components of the \emph{strict transform} of $Y$. Denote by $N_j$ the multiplicity of $E_j$ in the divisor on $\tilde{X}$ of $\varphi^{\ast}\I$, that is, the divisor of $\varphi^{\ast}\I$ is given by $\sum_{i \in J}N_jE_j$. Similarly, let $\nu_j-1$ be the multiplicity of $E_j$ in the divisor on $\tilde{X}$ of $\varphi^{\ast}(dx_0 \wedge \cdots \wedge dx_n)$. The numbers $(N_j,\nu_j)$ for $j \in J$ are called the \emph{numerical data} of the principalization. For every subset $I \subset J$, we also define $E_I^\circ := (\cap_{i \in I} E_i) \backslash (\cup_{l \not \in I}E_l)$. In terms of this notation, the \emph{local motivic Igusa zeta function} associated with the ideal $\I$ (or with the scheme $Y$) is given by \[Z^{mot}_{\I}(T) = \L^{-(n+1)}\sum_{I\subset J} [E^{\circ}_I \cap\varphi^{-1}(0)]\prod_{i \in I} \frac{(\L-1)\L^{-\nu_i}T^{N_i}}{1-\L^{-\nu_i}T^{ N_i}} \in \mathcal{M}_{\C}[[T]].\] Here, $[E^{\circ}_I\cap\varphi^{-1}(0)]$ and $\L := [\C]$ are the class of $E^{\circ}_I\cap\varphi^{-1}(0)$ and of the affine line, respectively, in the \emph{Grothendieck ring} of complex varieties $K_0(\text{Var}_\C)$, and $\mathcal{M}_\C$ is the localization of $K_0(\text{Var}_\C)$ with respect to $\L$. The precise definition of the Grothendieck ring of complex varieties can be found for instance in~\cite{MVV}. In the \emph{global version} of the motivic zeta function, we replace $[E^{\circ}_I\cap\varphi^{-1}(0)]$ by $[E^{\circ}_I]$. From this expression, it is immediate that both the local and the global motivic zeta function are rational functions in $T$, and that all candidate poles are of the form $\L^{\frac{\nu_j}{N_j}}$ for some $j \in J$. In concrete examples `most' of these candidate poles cancel; a phenomenon that the monodromy conjecture tries to explain.

\begin{remark}
In~~\cite{DL2}, Denef and Loeser introduced the motivic Igusa zeta function for a polynomial $f$ using the \emph{jet schemes} of $\{f = 0\}$, instead of an embedded resolution. However, in the same article, they showed the equivalence between both expressions. Similarly, one can write the motivic zeta function associated with a general ideal $\I$ in terms of the jet schemes of its corresponding scheme $V(\I)$. In fact, this is the definition used to compute the motivic zeta function of a space monomial curve $Y \subset \C^{g+1}$ in~\cite{MVV}. 
\end{remark}

The monodromy eigenvalues associated with the ideal $\I$ can also be expressed in terms of a principalization of $\I$. Before elaborating on this, we first briefly discuss the original definition by Verdier. For more details, we refer to~\cite{Di} and~\cite{VV}. For one polynomial $f \in \C[x_0,\ldots, x_n]$, there are two equivalent definitions for its eigenvalues of monodromy: the original definition in terms of the \emph{Milnor fibration}~\cite{Mi}, and a more abstract description by Deligne~\cite{De} using the notion of the \emph{complex of nearby cycles} on $Y = \{f=0\}$. While the original definition does not have a (straightforward) generalization to ideals, Deligne's description was the inspiration for Verdier~\cite{Ver} to define monodromy eigenvalues for an ideal by introducing the notion of the \emph{specialization complex} as follows. For a scheme $Z$, we denote by $D^b_c(Z)$ the full subcategory of the derived category $D(Z)$ consisting of complexes of sheaves of $\C$-vector spaces with bounded and constructible cohomology, and by $\C^{\boldsymbol\cdot} \in D^b_c(Z)$ the complex concentrated in degree zero induced by the constant sheaf $\C_Z$ on~$Z$. For one polynomial $f$, we can associate with $\C^{\boldsymbol\cdot} \in D^b_c(\C^{n+1})$ the \emph{complex of nearby cycles} $\psi_f\C^{\boldsymbol{\cdot}} \in D^b_c(Y)$ equipped with a \emph{monodromy transformation} $M^k_y: \mathcal H^k(\psi_f\C^{\boldsymbol{\cdot}})_y \rightarrow \mathcal H^k(\psi_f\C^{\boldsymbol{\cdot}})_y$ for each $y \in Y$ and $k\geq 0$, where $\mathcal H^k(\psi_f\C^{\boldsymbol{\cdot}})_y$ denotes the stalk at $y$ of the $k$th cohomology sheaf of $\psi_f\C^{\boldsymbol{\cdot}}$. An \emph{eigenvalue of monodromy} or \emph{monodromy eigenvalue} of $f$ (or of $Y$) is an eigenvalue of such a transformation $M^k_y$ for some $y \in Y$ and $k \geq 0$. For a general ideal $\I$, Verdier considered the \emph{normal cone} $C_Y\C^{n+1}$ of $Y = V(\I)$ in $\C^{n+1}$ defined as \[C_Y\C^{n+1} := \text{Spec}(\oplus_{k\geq 0}\I^k/\I^{k+1}),\] and related to $\C^{\boldsymbol{\cdot}} \in D^b_c(\C^{n+1})$ the \emph{specialization complex} $\text{Sp}_Y\C^{\boldsymbol{\cdot}} \in D^b_c(C_Y\C^{n+1})$ with a monodromy transformation $M^k_y: \mathcal H^k(\text{Sp}_Y\C^{\boldsymbol{\cdot}})_y \rightarrow \mathcal H^k(\text{Sp}_Y\C^{\boldsymbol{\cdot}})_y$ for each $y \in C_Y\C^{n+1}\setminus Y$ and $k\geq 0$. The \emph{(Verdier) monodromy eigenvalues} of $\I$ (or of $Y$) are the eigenvalues of these automorphisms. Despite the fact that the specialization complex lives on the normal cone of $Y$ instead of on $Y$ itself, where the complex of nearby cycles lives, it turns out that these two definitions for the monodromy eigenvalues in the hypersurface case are equivalent. 

\vspace{14pt}

In~\cite{AC}, A'Campo proved a formula for the monodromy eigenvalues of a polynomial $f$ in terms of an embedded resolution of $\{f=0\}$. This formula was generalized to ideals in~\cite{VV}. Later in this article, we will make use of an A'Campo formula in the more general context of a Cartier divisor on a normal surface. In fact, the notion of monodromy eigenvalues can be generalized in a straightforward way to any ideal sheaf $\I$ on a general variety $X$. Therefore, we state the formula in the following general context. Let $\I$ be a sheaf of ideals on a variety $X$, let $Y := V(\I)$ be the associated subscheme in $X$, and suppose that $\text{Sing}(X) \subset Y$. Consider the blow-up $\sigma: X' \rightarrow X$ of $X$ with center $Y$, and let $E'$ be its exceptional divisor, that is, the inverse image $\sigma^{-1}(Y)$ (with its non-reduced scheme structure). One can show that $E'$ is the projectivization $P(C_YX)$ of the normal cone $C_YX$ of $Y$ in $X$. Denote the corresponding projectivization map by $p:C_YX \setminus Y \rightarrow E' = P(C_YX)$. For a point $e\in E'$, we define the monodromy eigenvalues of $\I$ at $e$ as the eigenvalues of the monodromy transformation $M^k_y$ for some $y \in C_YX\setminus Y$ mapped to $e$ under $p$; this is independent of the choice of $y$. Hence, we can define the \emph{zeta function of monodromy} or \emph{monodromy zeta function} of $\I$ at $e \in E'$ as \[Z_{\I,e}^{mon}(t) := \prod_{k \geq 0}\det(\text{Id} - tM^k_y)^{(-1)^k},\] where $y \in C_YX\setminus Y$ is an arbitrary point in $p^{-1}(e)$. For one polynomial $f$, Denef~\cite[Lemma 4.6]{Den2} showed that every monodromy eigenvalue associated with $f$ is a zero or pole of the monodromy zeta function of $f$ at some point $e \in E'$. This result can easily be generalized to ideals. 

\begin{theorem}\cite{VV} \label{thm:ACampo-Princ}
Let $\I$ be a sheaf of ideals on a variety $X$. Let $Y = V(\I)$ be the associated subscheme in $X$, and suppose that $\text{Sing}(X) \subset Y$. Consider a principalization $\varphi: \tilde X \rightarrow X$ of $\I$. Denote by $E_j$ for $j \in J$ the irreducible components of $\varphi^{-1}(Y)$ with numerical data $(N_j,\nu_j)$, and define $E^{\circ}_j = E_j \setminus \cup_{i \neq j}(E_i \cap E_j)$ for every $j \in J$. Let ${\sigma: X' \rightarrow X}$ be the blow-up of $X$ with center $Y$ and let $E' = \sigma^{-1}(Y)$ be its exceptional divisor. By the universal property of the blow-up, there exists a unique morphism $\psi: \tilde X \rightarrow X'$ such that $\sigma \circ \psi = \varphi$. For a point $e \in E'$, the zeta function of monodromy of $\I$ at $e$ is given by \[Z_{\I,e}^{mon} (t)= \prod_{j \in J}(1-t^{N_j})^{\chi(E^{\circ}_j \cap \psi^{-1}(e))},\] where $\chi$ denotes the topological Euler characteristic. 
\end{theorem}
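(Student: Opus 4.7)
The plan is to reduce the statement to A'Campo's classical formula for one analytic function, via the blow-up $\sigma:X'\to X$ along $Y$. By construction, $\sigma^{-1}\I\cdot\mathcal{O}_{X'}$ is invertible and equals the ideal sheaf of the Cartier divisor $E'$ on $X'$. Consequently, around any point $e\in E'$, one can pick a local analytic equation $g$ for $E'$, and the strategy is to prove the identification
\[ Z^{mon}_{\I,e}(t) \;=\; Z^{mon}_{g,e}(t), \]
where the right-hand side is the ordinary hypersurface monodromy zeta function of $g$ at $e$, computed from the complex of nearby cycles $\psi_g\C^{\boldsymbol{\cdot}}$.

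The first main step is this identification. By Verdier's definition, $Z^{mon}_{\I,e}(t)$ is read off from the stalk of the specialization complex $\text{Sp}_Y\C^{\boldsymbol{\cdot}}$ at any point $y\in C_YX\setminus Y$ with $p(y)=e$. The bridge to ordinary nearby cycles is the deformation to the normal cone, which realizes $C_YX$ as the special fiber of a one-parameter family interpolating between $X$ and $C_YX$; in these terms, $\text{Sp}_Y\C^{\boldsymbol{\cdot}}$ is itself a complex of nearby cycles. Transporting this picture through $\sigma$ and using Verdier's compatibility of the specialization functor with proper direct images, one matches $\text{Sp}_Y\C^{\boldsymbol{\cdot}}$ (along the projection $p$) to $\psi_g\C^{\boldsymbol{\cdot}}$ equivariantly with respect to the monodromy. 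The hypothesis $\text{Sing}(X)\subset Y$ ensures that $\sigma$ and $p$ are isomorphisms outside $E'$, so that the base-change arguments apply on stalks.

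Once this reduction is in place, the second step is a direct application of the classical A'Campo formula~\cite{AC} to $g$ with the morphism $\psi:\tilde X\to X'$. Because $\varphi=\sigma\circ\psi$ is a principalization of $\I$, one has $\varphi^{\ast}\I=\mathcal{O}_{\tilde X}(-\sum_{j\in J}N_jE_j)$, and since $\sigma^{\ast}\I=\mathcal{O}_{X'}(-E')$, it follows that $\psi^{\ast}g$ cuts out exactly the divisor $\sum_{j\in J}N_jE_j$ on $\tilde X$. In particular, $\psi$ is an embedded resolution of $\{g=0\}$ with exactly the same irreducible components $E_j$ and multiplicities $N_j$ as in the principalization of $\I$. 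A'Campo's formula at $e$ then yields
\[ Z^{mon}_{g,e}(t) \;=\; \prod_{j\in J}(1-t^{N_j})^{\chi(E_j^{\circ}\cap\psi^{-1}(e))}, \]
which, combined with the identification above, gives the desired expression.

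The main obstacle is the first step: translating Verdier's specialization complex, which is an intrinsic object on the normal cone, into the nearby-cycles complex of an explicit local function on $X'$. The deformation-to-the-normal-cone construction is natural, but one must carefully track the monodromy action through base changes and proper pushforwards, and verify that the automorphisms on $\mathcal{H}^k(\text{Sp}_Y\C^{\boldsymbol{\cdot}})_y$ and $\mathcal{H}^k(\psi_g\C^{\boldsymbol{\cdot}})_e$ agree up to conjugation for every $k$. Once this sheaf-theoretic comparison is established, the remaining content is a routine transcription of the classical A'Campo stratification argument to the present setting.
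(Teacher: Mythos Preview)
The paper does not give its own proof of this theorem: it is stated with the attribution \cite{VV} and used as a black box. There is therefore no proof in the present paper to compare your proposal against.

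That said, your outline matches the natural strategy and is essentially the argument in \cite{VV}. The two-step reduction---first identifying $Z^{mon}_{\I,e}$ with the classical hypersurface monodromy zeta function of a local generator $g$ of $\sigma^{\ast}\I$ on $X'$, then applying A'Campo's formula to $g$ via the embedded resolution $\psi$---is exactly how the result is obtained. Your identification of the main technical point (the sheaf-theoretic comparison between Verdier's specialization complex on $C_YX$ and $\psi_g\C^{\boldsymbol{\cdot}}$ on $X'$, equivariant for the monodromy) is accurate; this is the content behind Verdier's construction and the observation in the paper that the specialization and nearby-cycles definitions agree for hypersurfaces. One small caveat: to invoke the classical A'Campo formula you need $X'$ to be smooth (or at least to have $\text{Sing}(X')\subset E'$) near $e$, which follows from $\text{Sing}(X)\subset Y$ together with the fact that $\sigma$ is an isomorphism over $X\setminus Y$; you use this implicitly but it is worth stating.
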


When $\I = (f)$ is a principal ideal, we can consider the blow-up $\sigma$ as the identity so that $\varphi = \psi$ and 
	\begin{equation}\label{eq:ACampo-poly}
		Z_{f,y}^{mon} (t)= \prod_{j \in J}(1-t^{N_j})^{\chi(E^{\circ}_j\cap \varphi^{-1}(y))},
	\end{equation}
which is the classical A'Campo formula for $y \in Y = \{f=0\}$. In the next section, we will introduce another generalization of this formula in which the final ambient space $\tilde X$ of the embedded resolution $\varphi: \tilde X \rightarrow X$ of $\{f = 0\}$ is allowed to have abelian quotient singularities. Such a resolution is called an \emph{embedded $\Q$-resolution}, and it is this formula that we will use to compute the monodromy eigenvalues associated with a space monomial curve $Y \subset \C^{g+1}$ by considering it as a Cartier divisor on a generic embedding surface.

\vspace{14pt}

After having introduced the two invariants of an ideal that are investigated in the monodromy conjecture, we can now state this conjecture in more detail. 

\begin{conj}
Let $\I = (f_1,\ldots, f_r)$ be an ideal in $\C[x_0,\ldots, x_n]$ whose associated subscheme $Y = V(\I)$ in $\C^{n+1}$ contains the origin. Let $\sigma: X' \rightarrow \C^{n+1}$ be the blow-up of $\C^{n+1}$ with center $Y$. If $\L^{-s_0}$ is a pole of the local motivic Igusa zeta function associated with $\I$, then $e^{2\pi i s_0}$ is a zero or pole of the monodromy zeta function of $\I$ at a point in $\sigma^{-1}(B \cap Y)$ for $B \subset \C^{n+1}$ a small ball around the origin.
\end{conj}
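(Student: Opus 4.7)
The plan is to prove this conjecture for the space monomial curve $Y \subset \C^{g+1}$ from Section~\ref{SpaceMonomial}, with $g \geq 2$. Since \cite{MVV} already provides the complete list of poles as $\L^g$ and $\L^{\nu_k/N_k}$ for $k = 1,\ldots,g$, and since the pole $\L^g$ automatically produces the eigenvalue $1$ (which arises as a zero or pole of the monodromy zeta function at any point of $\sigma^{-1}(0)$ because $0 \in Y$), the task reduces to showing: for each $k$ with $\nu_k/N_k \notin \N$, the complex number $e^{-2\pi i \nu_k/N_k}$ is a zero or pole of $Z^{mon}_{\I,q}(t)$ for some $q \in \sigma^{-1}(0)$. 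The candidate point will be $q := p = S' \cap \sigma^{-1}(0)$, where $S'$ is the strict transform under $\sigma$ of a generic embedding surface $S := S(\lambda_2,\ldots,\lambda_g)$ containing $Y$ as a Cartier divisor. A first reduction compares principalizations of $\I$ on $\C^{g+1}$ and of $Y$ on $S$ via a common refinement, and uses Theorem~\ref{thm:ACampo-Princ} to establish the equality $Z^{mon}_{\I,p}(t) = Z^{mon}_{Y,0}(t)$ (the content of Theorem~\ref{thm:red-to-on-curve-surface}), where the right-hand side is now the monodromy zeta function of $Y$ viewed as a Cartier divisor on the surface $S$ at the origin.

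The computation of $Z^{mon}_{Y,0}(t)$ relies on constructing an embedded $\Q$-resolution of $Y \subset S$ by a sequence of $g$ weighted blow-ups, mirroring the toric resolution of an irreducible plane branch with $g$ Puiseux pairs. Each blow-up is designed to eliminate one of the equations $f_i$, reducing the problem to a configuration of the same shape but with one fewer generator in the semigroup; the final step amounts to resolving a cusp on a Hirzebruch-Jung singularity of type $\frac{1}{d}(1,q)$. The main technical obstacle lies in tracking the strict transform of $Y$ through the abelian quotient singularities of the intermediate ambient surfaces; indeed, unlike in the plane branch case, this strict transform will generically pass through a singular point after the first blow-up. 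This bookkeeping uses the arithmetic properties (i)--(iv) of Section~\ref{SpaceMonomial} and the relation~\eqref{eq:rel-gcd-lcm} to determine the numerical data $(N_j,\nu_j)$ of each exceptional component $E_j$ and the combinatorics of the resolution graph. Applying the $\Q$-resolution A'Campo formula of \cite{Ma1}, stratifying each $E_j$ into loci of constant multiplicity, and summing the Euler characteristic contributions then yields the product expression
\[Z^{mon}_{Y,0}(t) = \frac{\prod_{k = 0}^g (1 - t^{M_k})^{\lbeta_k/M_k}}{\prod_{k = 1}^g (1 - t^{N_k})^{n_k \lbeta_k / N_k}}.\]

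With this expression in hand, the final step is to verify that, for each $k$ with $\nu_k/N_k \notin \N$, the root of unity $\zeta := e^{-2\pi i \nu_k/N_k}$ is a genuine pole of $Z^{mon}_{Y,p}(t)$. Concretely, one must compute the difference between the total multiplicity of $\zeta$ as a root of the denominator $\prod_{k'}(1-t^{N_{k'}})^{n_{k'}\lbeta_{k'}/N_{k'}}$ and as a root of the numerator $\prod_{k'}(1-t^{M_{k'}})^{\lbeta_{k'}/M_{k'}}$, and show that this difference is strictly positive. I expect this non-cancellation argument to be the main obstacle: it must delicately separate the contributions of distinct indices $k'$ and use the strict inequalities $n_i\lbeta_i < \lbeta_{i+1}$ together with the precise arithmetic interplay between the $M_k$, $N_k$ and the divisibility $n_j \mid \lbeta_i$ for $j > i$ to rule out any accidental coincidences among the orders of the relevant roots of unity.
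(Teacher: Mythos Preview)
Your outline is correct and mirrors the paper's strategy closely: reduce to the Cartier divisor $Y\subset S$ via Theorem~\ref{thm:red-to-on-curve-surface}, compute $Z^{mon}_{Y,0}(t)$ from the embedded $\Q$-resolution of Section~\ref{Resolution} and the A'Campo formula of Theorem~\ref{thm:ACampo-QEmb}, and then verify non-cancellation of each $e^{-2\pi i\nu_k/N_k}$.

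The one place where the paper is more precise than your sketch is the non-cancellation step. Rather than comparing total multiplicities of $\zeta$ in the full numerator and denominator of $Z^{mon}_{Y,p}(t)$, the paper passes to the characteristic polynomial $\Delta(t)=(t-1)/Z^{mon}_{Y,0}(t)$ and exhibits an explicit factorization $\Delta(t)=\prod_{k=1}^g P_k(t)$, where each
\[
P_k(t)=\frac{(t^{N_k}-1)^{n_k\lbeta_k/N_k}\,(t^{L_{k+1}}-1)^{e_k/L_{k+1}}}{(t^{M_k}-1)^{\lbeta_k/M_k}\,(t^{L_k}-1)^{e_{k-1}/L_k}},\qquad L_k:=\lcm(n_k,\ldots,n_g),
\]
is individually a polynomial. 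The candidate $\lambda_k$ is then shown to be a zero of the single factor $P_k(t)$ via a four-case analysis (depending on whether $\lambda_k$ is a root of $t^{M_k}-1$ and/or $t^{L_k}-1$), using the inequalities $n_k>1$ and $\lbeta_k>\lvert\sum_{l<k}\lbeta_l-\sum_{l<k}n_l\lbeta_l\rvert$. This localization to a single $P_k$ is what makes the arithmetic tractable; a direct global multiplicity count across all $k'$ as you suggest would be considerably messier, since the same root of unity can a priori appear in several factors $(1-t^{N_{k'}})$ and $(1-t^{M_{k'}})$.
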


So far, this conjecture has only been proven for ideals in two variables~\cite{VV}. In this article, we will show the conjecture for the space monomial curves introduced in Section~\ref{SpaceMonomial}; this solves it for an interesting class of binomial ideals in arbitrary dimension. Along the same lines, we will also prove the global version of the monodromy conjecture.


\section{Monodromy zeta function formula for embedded \texorpdfstring{$\Q$}{Q}-resolutions} \label{MonodromyWeighted}

As mentioned earlier in this article, we will make use of an A'Campo formula for the monodromy zeta function of a polynomial $f \in \C[x_0,\ldots, x_n]$ in terms of an \emph{embedded $\Q$-resolution} of $\{f=0\}$. Roughly speaking, this is a resolution $\varphi: \tilde X \rightarrow \C^{n+1}$ in which we allow $\tilde X$ to have abelian quotient singularities and the divisor $\varphi^{-1}(\{f=0\})$ to have normal crossings on such a variety. In this section, we briefly introduce all concepts needed to understand this formula. We refer to~\cite{AMO1} for more details.

\vspace{14pt}

We start with the notion of a \emph{$V$-manifold} of dimension $n$ which was introduced by Satake~\cite{Sa} as a complex analytic space admitting an open covering $\{U_i\}$ in which each $U_i$ is analytically isomorphic to some quotient $B_i/G_i$ for $B_i \subset \C^n$ an open ball and $G_i$ a finite subgroup of $GL(n,\C)$. We are interested in $V$-manifolds in which every $G_i$ is a finite abelian subgroup of $GL(n,\C)$. In fact, every quotient $\C^n/G$ for $G \subset GL(n,\C)$ a finite abelian group is isomorphic to a specific kind of quotient space, called a \emph{quotient space of type $(\d;A)$} in which $\d$ is an $r$-tuple of positive integers and $A$ is an $(r\times n)$-matrix over the integers. More precisely, we can write $G = \mu_{d_1} \times \cdots \times \mu_{d_r}$ as a product of finite cyclic groups, where $\mu_{d_i}$ is the cyclic group of the $d_i$th roots of unity. We will denote $G$ by $\mu_{\d}$, where $\d$ is the $r$-tuple $(d_1,\ldots, d_r)$, and an element in $\mu_{\d}$ by $\bxi_{\d} := (\xi_{d_1},\ldots, \xi_{d_r})$. For a matrix $A = (a_{ij})_{i,j} \in \mathbb Z^{r\times n}$, we can define an action of $\mu_{\d}$ on $\C^n$ by 
	\begin{equation}\label{eq:action}
		\mu_{\d} \times \C^n  \longrightarrow  \C^n:(\bxi_{\d} , \x)  \mapsto (\xi_{\d}^{\a_1}x_1,\ldots, \xi_{\d}^{\a_n}x_n) =  (\xi_{d_1}^{a_{11}} \cdots \xi_{d_r}^{a_{r1}}\, x_1,\, \ldots\, , \xi_{d_1}^{a_{1n}} \cdots \xi_{d_r}^{a_{rn}}\, x_n ),
	\end{equation}
where $\a_j := (a_{1j},\ldots, a_{rj})^t$ is the $j$th column of $A$. Note that we can always consider the $i$th row $(a_{i1},\ldots, a_{in})$ of $A$ modulo $d_i$. The resulting quotient space $\C^n/\mu_{\d}$ is called the \emph{quotient space of type $(\d;A)$} and denoted by \[ X(\d; A) := X \left( \begin{array}{c|ccc} d_1 & a_{11} & \cdots & a_{1n}\\ \vdots & \vdots & \ddots & \vdots \\ d_r & a_{r1} & \cdots & a_{rn} \end{array} \right).\] If $r = 1$, the quotient space $X(d;a_1,\ldots, a_n)$ is said to be \emph{cyclic}. The class of an element $\x = (x_1,\ldots, x_n) \in \C^n$ under an action $(\d;A)$ will be denoted by $[\x]_{(\d;A)} = [(x_1,\ldots, x_n)]_{(\d;A)}$, where the subindex is omitted if there is no possible confusion. The image of each coordinate hyperplane $\{x_i = 0\}$ in $\C^n$ for $i = 1,\ldots, n$ under the natural projection $\C^n \rightarrow X(\d;A)$ will still be denoted by $\{x_i = 0\}$ and called a \emph{coordinate hyperplane} in $X(\d;A)$. One can show that the original quotient space $\C^n/G$ is isomorphic to $X(\d;A)$ for some matrix $A$, and that every space $X(\d;A)$ is a normal irreducible algebraic variety of dimension $n$ with its singular locus, which is of codimension at least two, situated on the coordinate hyperplanes. Hence, a $V$-manifold with abelian quotient singularities is a normal variety which can locally be written like $X(\d;A)$. 

\begin{ex}\label{ex:quotient-space}
If $n = 1$, then each quotient space $X((d_1,\ldots, d_r);(a_{11},\ldots, a_{r1})^t)$ is isomorphic to $\C$: let  \[l = \lcm\bigg(\frac{d_1}{\gcd(d_1,a_{11})},\ldots, \frac{d_r}{\gcd(d_r,a_{r1})}\bigg),\] then   $X((d_1,\ldots, d_r);(a_{11},\ldots, a_{r1})^t) \rightarrow \C:[x] \rightarrow x^l$ is an isomorphism.
\end{ex} 

Different types $(\d;A)$ can induce isomorphic quotient spaces: for example, if $k$ divides $d,a_2,\ldots, a_n$, then $X(d;a_1,\ldots, a_n)$ is isomorphic to $X(\frac{d}{k};a_1,\frac{a_2}{k},\ldots, \frac{a_n}{k})$ under the isomorphism defined by 
	\begin{equation}\label{eq:normalizing}
 		[(x_1,x_2,\ldots, x_n)]\longmapsto [(x_1^k,x_2,\ldots,x_n)].
	\end{equation}
A particularly interesting kind of types are the \emph{normalized} types. These are types $(\d;A)$ in which the group $\mu_{\d}$ is small as subgroup of $GL(n,\C)$ (i.e., it does not contain rotations around hyperplanes other than the identity) and acts freely on $(\C^{\ast})^n$. In this case, we will also say that the quotient space $X(\d;A)$ is \emph{written in a normalized form}. Equivalently, a space $X(\d;A)$ is written in a normalized form if and only if for all $\x \in \C^n$ with exactly $n-1$ coordinates different from $0$, the stabilizer subgroup is trivial. Note that in the cyclic case, the stabilizer subgroup of a point $(x_1, \ldots, x_n) \in \C^n$ with only $x_i = 0$ has order $\gcd(d,a_1,\ldots, \hat{a}_i,\ldots, a_n)$. 

\begin{ex}
The space $X(d;a_1,a_2)$ is written in a normalized form if and only if both $\gcd(d,a_1)$ and $\gcd(d,a_2)$ are equal to $1$. We can normalize it with the isomorphism (assuming that $\gcd(d,a_1,a_2) = 1$) \[X(d;a_1,a_2)  \longrightarrow X \bigg( \frac{d}{(d,a_1)(d,a_2)}; \frac{a_1}{(d,a_1)}, \frac{a_2}{(d,a_2)}\bigg): [(x_1,x_2)] \mapsto \big[ (x_1^{(d,a_2)},x_2^{(d,a_1)}) \big],\] which is the composition of two isomorphisms of the form~\eqref{eq:normalizing}.
\end{ex}

In general, it is possible to convert any type into a normalized form. Especially in the cyclic case, this is not hard, using isomorphisms such as~\eqref{eq:normalizing}. See~\cite[Lemma 1.8]{AMO1} for a list of some other useful isomorphisms.

\vspace{14pt}

An \emph{analytic function} $f: X(\d;A) \rightarrow \C$ on a quotient space of some type $(\d;A)$ is a holomorphic function $f: \C^n \rightarrow \C$ compatible with the action, that is, $f({\bf\xi}_{\d}\cdot \x)  = f(\x)$ for all ${\bf\xi}_{\d}\in \mu_{\d}$ and $\x \in \C^n$. To compute the local equation of the divisor defined by $f:(X(\d;A), [{\bf p}]) \rightarrow (\C,0)$ as a germ of functions at ${\bf p} = (p_1, \ldots, p_n) \in \C^n \setminus \{0\}$, one would naturally use the change of coordinates $x_i \mapsto x_i + p_i$. However, this coordinate change induces an isomorphism on $X(\d;A)$ if and only if the $i$th row of $A$ is zero (modulo $d_i$) for all $i$ for which $p_i \neq 0$. Hence, we first need to find an isomorphism $(X(\d;A),[{\bf p}]) \simeq (X(\d';A'),[{\bf p}])$ with $(\d';A')$ having this property. One can show that this is satisfied by $(\d';A')$ with $X(\d';A') = \C^n/(\mu_{\d})_{\bf p}$, where $(\mu_{\d})_{\bf p}$ is the stabilizer subgroup of ${\bf p}$. In particular, if $X(d;a_1,\ldots, a_n)$ is cyclic, then the order of the stabilizer subgroup of ${\bf p}$ is $m = \gcd(d,\{a_i \mid p_i \neq 0\})$ so that $(\d';A') = (m;a_1,\ldots,a_n)$ in which $a_i$ modulo $m$ will be zero if $p_i \neq 0$. On $X(\d';A')$, we can apply the usual change of coordinates $x_i \mapsto x_i + p_i$ to find the local equation of $f$ at ${\bf p}$. This method will be very useful for the description of the $\Q$-resolution of a space monomial curve seen as a Cartier divisor on a generic embedding surface in Section~\ref{Resolution}.

\vspace{14pt}

An important class of $V$-manifolds are the \emph{weighted projective spaces}. Consider a weight vector $\omega = (p_0,\ldots, p_n)$ of positive integers. The \emph{weighted projective space of type $\omega$}, denoted by $\P^n_{\omega}$, is the set of orbits $(\C^{n+1}\setminus\{0\})/\C^{\ast}$ under the action \[\C^{\ast} \times (\C^{n+1}\setminus \{0\}) \longrightarrow \C^{n+1}\setminus \{0\}: (t,(x_0,\ldots,x_n)) \mapsto (t^{p_0}x_0,\ldots, t^{p_n}x_n).\] We denote the class of an element $\x = (x_0,\ldots, x_n)\in\C^{n+1}\setminus\{0\}$ by $[\x]_{\omega} = [x_0:\ldots:x_n]_{\omega}$, where we again omit $\omega$ if possible. Note that for the trivial weight vector $\omega =(1,\ldots,1)$, we obtain the classical projective space $\P^n$. Furthermore, one can show that $\P^1_{\omega}$ is always isomorphic to $\P^1$, cf. Example~\ref{ex:quotient-space}. As for the classical projective space, we can define an open covering $\P^n_{\omega} = V_0 \cup \cdots \cup V_n,$ where $V_i := \{x_i \neq 0\}$. It is easy to see that for every $i$, the map \[X(p_i;p_0, \ldots, \hat{p}_i, \ldots, p_n) \longrightarrow V_i: (x_0, \ldots, \hat{x}_i, \ldots,x_n) \mapsto [x_0: \ldots:x_{i-1}:1:x_{i+1}: \ldots:x_n]_{\omega}\] is an isomorphism. It follows that $\P^n_{\omega}$ contains cyclic quotient singularities. Even more, each weighted projective space $\P^n_{\omega}$ is a normal irreducible projective variety of dimension $n$ whose singular locus, which is of codimension at least two, consists of quotient singularities lying on the intersection of at least two coordinate hyperplanes. For more information on weighted projective spaces, see for instance~\cite{Dol}.

\vspace{14pt}

Another notion we need is a \emph{$\Q$-normal crossings divisor}, which was first introduced by Steenbrink~\cite{Ste}. Let $X$ be a $V$-manifold with abelian quotient singularities and $D$ a hypersurface on $X$. We say that $D$ has \emph{$\Q$-normal crossings} if it is locally isomorphic to the quotient of a normal crossings divisor under an action $(\d;A)$. More precisely, for every point $p \in X$, there exists an isomorphism of germs $(X,p) \simeq (X(\d;A),[0])$ such that $(D,p) \subseteq (X,p)$ is identified with a germ of the form \[(\{[\x] \in X(\d;A)\mid x_1^{m_1}\cdots x_k^{m_k} = 0\},[0]).\] The \emph{multiplicity} of a $\Q$-normal crossings divisor $D$ at a point $p \in D$ is defined as follows. Suppose that $p$ is contained in only one irreducible component of $D$; we will only consider this situation, see~\cite{Ma2} for a more general definition in case $p$ is possibly contained in multiple irreducible components. In this case, the local equation of $D$ at $p$ is of the form $x_i^m: X(\d;A) \rightarrow \C$ for $x_i$ a local coordinate of $X$ at $p$. The \emph{multiplicity} $m(D,p)$ of $D$ at $p$ is defined as 
	\begin{equation}\label{eq:def-mult}
		m(D,p) := \frac{m}{l_i}, \qquad l_i := \lcm\bigg(\frac{d_1}{\gcd(d_1,a_{1i})},\ldots, \frac{d_r}{\gcd(d_r,a_{ri})}\bigg).
	\end{equation}
One can show that this definition is independent of the type $(\d;A)$.

\vspace{14pt}

We can now define an \emph{embedded $\Q$-resolution}, see for instance~\cite{AMO2}. Let $X$ be an abelian quotient space and $Y \subseteq X$ an analytic subvariety of codimension one. An \emph{embedded $\Q$-resolution} of $(Y,0) \subseteq (X,0)$ is a proper analytic map $\varphi: \tilde{X} \rightarrow (X,0)$ such that the following properties hold:
\begin{enumerate}
	\item[(i)] $\tilde{X}$ is a $V$-manifold with abelian quotient singularities;
	\item[(ii)] $\varphi$ is an isomorphism over $\tilde{X} \setminus \varphi^{-1}(\text{Sing}(Y))$; and
	\item[(iii)] the total transform $\varphi^{-1}(Y)$ is a hypersurface with $\Q$-normal crossings on $\tilde{X}$.
\end{enumerate}
As for usual embedded resolutions, we can use the operation of blowing up to construct an embedded $\Q$-resolution, but in this case, we use \emph{weighted blow-ups}. Since we will only use weighted blow-ups at a point in this article, we restrict to explaining this kind of blow-ups.

\vspace{14pt}

We first briefly recall the classical blow-up of $\C^{n+1}$ at the origin. We use the notation $\x := (x_0,\ldots,x_n) \in \C^{n+1}$ and $[{\bf u}] := [u_0:\ldots:u_n] \in \P^n$. Define \[\hat{\C}^{n+1}:= \big\{(\x,[{\bf u}])\in \C^{n+1} \times \P^n \mid {\bf x } \in \overline{[{\bf u}]}\big\},\] where ${\bf x } \in \overline{[{\bf u}]}$ means that $u_ix_j=u_jx_i$ for all  $i,j = 0, \ldots, n$. The \emph{blow-up of $\C^{n+1}$ at $0$} is given by the projection $\pi: \hat{\C}^{n+1} \rightarrow \C^{n+1}$. This is a proper birational morphism inducing an isomorphism $\hat{\C}^{n+1} \setminus \pi^{-1}(0) \simeq \C^{n+1} \setminus \{0\}$. The exceptional divisor $\pi^{-1}(0)$ can be identified with $\P^n$, and $\hat{\C}^{n+1}$ can be covered by $n+1$ charts ${U_i := \{u_i \neq 0\}}$ which are isomorphic to $\C^{n+1}$ under maps of the form \[\C^{n+1} \longrightarrow U_i: {\bf x } \mapsto \big((x_0x_i, \ldots,x_i,\ldots, x_nx_i),[x_0: \ldots:x_{i-1}:1:x_{i+1}: \ldots:x_n] \big).\] 

\vspace{14pt}

The weighted blow-up of $\C^{n+1}$ at the origin with respect to a weight vector $\omega = (p_0,\ldots,p_n)$ of positive integers is defined similarly. Let \[\hat{\C}_{\omega}^{n+1}:= \big\{(\x, [{\bf u}]_{\omega})\in \C^{n+1} \times \P^{n}_{\omega} \mid \x \in \overline{[{\bf u}]}_{\omega}\big\},\] then \emph{the $\omega$-weighted blow-up of $\C^{n+1}$ at $0$} is the projection $\pi: \hat{\C}_{\omega}^{n+1} \rightarrow \C^{n+1}$. In this case, the condition $\x \in \overline{[{\bf u}]}_{\omega}$ can be rewritten as $x_i=t^{p_i}u_i$ for all $i = 0, \ldots, n$ and some fixed $t \in \C \setminus \{0\}$. This blow-up is again a proper birational morphism and it is an isomorphism on $\hat{\C}_{\omega}^{n+1} \setminus \pi^{-1}(0)$. The exceptional divisor can now be identified with the weighted projective space $\P_{\omega}^n$, and $\hat{\C}_{\omega}^{n+1}$ can be covered by $n+1$ charts $U_i := \{u_i \neq 0\}$ where each $U_i$ is isomorphic to $X(p_i;p_0, \ldots,p_{i-1},-1,p_{i+1}, \ldots, p_n)$ under the morphism $X(p_i;p_0, \ldots,-1, \ldots, p_n)\rightarrow U_i$ defined by
	\begin{equation}\label{eq:chart-blow-up}
		\x \longmapsto  \big((x_0x_i^{p_0}, \ldots, x_i^{p_i}, \ldots,x_nx_i^{p_n}),[x_0: \ldots:1:\ldots: x_n]_{\omega} \big).
	\end{equation}
These charts are compatible with the charts $V_i$ of $\P^n_{\omega}$ described above in the following sense: in $U_i$, the exceptional divisor is described by $x_i = 0$, and the $i$th chart of $\P^n_{\omega}$ is $X(p_i;p_0,\ldots, \hat{p}_i,\ldots, p_n)$. 

\vspace{14pt}

For a general abelian quotient space $X(\d;A) = \C^{n+1}/\mu_{\d}$, the weighted blow-up at $0$ with respect to $\omega =(p_0,\ldots, p_n)$ can be obtained from the $\omega$-weighted blow-up of $\C^{n+1}$ at $0$ as follows. The action of $\mu_{\d}$ on $\C^{n+1}$ extends in a natural way to an action on $\hat{\C}^{n+1}_{\omega}$ by \[\bxi_{\d} \cdot (\x,[{\bf u}]_{\omega}) = \big((\bxi_{\d}^{\a_0}\, x_0,\, \ldots\, ,\bxi_{\d}^{\a_n}\, x_n),\, [\bxi_{\d}^{\a_0}\, u_0:\, \ldots\, :\bxi_{\d}^{\a_n}\, u_n]_{\omega}\big).\] The \emph{$\omega$-weighted blow-up of $X(\d;A)$ at $0$} is defined as the projection \[\pi: \hat{X}(\d;A)_{\omega} := \hat{\C}^{n+1}_{\omega} / \mu_{\d} \longrightarrow X(\d;A):[(\x, [{\bf u}]_{\omega})]_{(\d;A)} \mapsto [\x]_{(\d;A)},\] which is once more a proper birational morphism. It induces an isomorphism on $\hat{X}(\d;A)_{\omega} \setminus \pi^{-1}(0)$, and the exceptional divisor is identified with $\P^n_{\omega}/{\mu_{\d}}$, which we will also write as $\P^n_w(\d;A)$. Because the action of $\mu_{\d}$ on $\hat{\C}^{n+1}_{\omega}$ respects the charts $U_i = \{u_i \neq 0\}$ of $\hat{\C}^{n+1}_{\omega}$, we can cover $\hat{X}(\d;A)_{\omega}$ with the $n+1$ charts $\hat{U}_i := U_i/\mu_{\d}$. Using the isomorphisms $U_i \simeq X(p_i;p_0, \ldots, -1,\ldots, p_n)$, one can show that each $\hat{U}_i$ is also isomorphic to an abelian quotient space. For example, under the isomorphism $U_0 \simeq X(p_0;-1,p_1, \ldots,p_n)$, the action of $\mu_{\d} = \mu_{d_1} \times \cdots \times \mu_{d_r}$ on $U_0$ can be identified with the action of $\mu_{\d p_0}/(\mu_{p_0}\times \cdots \times \mu_{p_0})$ on $X(p_0;-1,p_1, \ldots,p_n)$ given by \[[{\bf\xi}] \cdot [\x]_{(\d;A)} = [(\bxi^{\a_0}\,x_0,\bxi^{p_0\a_1-p_1\a_0}\,x_1, \ldots, \xi^{p_0\a_n-p_n\a_0}\,x_n)]_{(\d;A)}.\] Hence, the quotient space 
	\begin{equation}\label{eq:chart-blow-up-quotient-space}
		X\left(\begin{array}{c | cccc} 
		p_0 & -1 & p_1 & \cdots & p_n \\ 
		\d p_0 & \a_0 & p_0\a_1-p_1\a_0 & \cdots &p_0\a_n-p_n\a_0 
		\end{array} \right)
	\end{equation}
is isomorphic to $\hat{U}_0$ under the map \[[\x] \longmapsto \big[\big((x_0^{p_0}, x_0^{p_1}x_1, \ldots, x_0^{p_n}x_n),[1:x_1: \ldots:x_n]_{\omega}\big)\big]_{(\d;A)}.\] The other charts are similar. The charts of $\hat{X}(\d;A)_{\omega}$ are again compatible with those of the exceptional divisor: we can cover $\P^n_{\omega}/{\mu_{\d}} = \hat{V}_0\cup \cdots \cup \hat{V}_k$ with $\hat{V}_i := V_i/\mu_{\d}$ and $\hat{V}_i = \hat{U}_i\vert_{\{x_i=0\}}$. It follows, for example, that the space 
	\begin{equation}\label{eq:chart-weighted-proj-space-with-action}
		X\left(\begin{array}{c | ccc} p_0 &  p_1 & \cdots & p_n \\ 
		\d p_0 & p_0\a_1-p_1\a_0 & \cdots &p_0\a_n-p_n\a_0  \end{array} \right)
	\end{equation}
is isomorphic to $\hat{V}_0$.

\vspace{14pt}

We are finally ready to introduce the generalization of A'Campo's formula in terms of an embedded $\Q$-resolution. As in the previous section, we again work in a slightly more general situation; let $f: (X,0) \rightarrow (\C,0)$ be a non-constant regular function on a variety $X$ and let $(Y,0)$ be the hypersurface defined by $f$. Consider an embedded $\Q$-resolution $\varphi: \tilde X \rightarrow X$ of $(Y,0)$, and denote by $E_0$ and $E_j$ for $j = 1,\ldots, r$ the strict transform of $Y$ and the exceptional varieties, respectively. Define $E_I^{\circ} := ( \cap_{i \in I} E_i) \setminus ( \cup_{l\notin I} E_l)$ for every $I \subset \{0,\ldots,r\}$. Let $\tilde X = \sqcup_{l = 1 }^s Q_l$ be a finite stratification of $\tilde X$ given by its quotient singularities so that for every $I$ and $l$, there exist a fixed abelian group $G$ and positive integers $m_1,\ldots, m_k$ such that the local equation of $f \circ \varphi$ at a point $p \in E_I^{\circ} \cap Q_l$ is of the form $x_1^{m_1}\cdots x_k^{m_k}: B/G \rightarrow \C$ for $B$ an open ball around $p$ on which $G$ acts diagonally such as in~\eqref{eq:action}, and $x_1,\ldots, x_k$ local coordinates of $\tilde X$ at $p$. Lastly, for every $j = 1,\ldots, r$ and $l = 1,\ldots, s$, put $E_{j,l}^{\circ} := E_j^{\circ} \cap Q_l$ and $m_{j,l} := m(E_j,p)$ for a point $p \in E_{j,l}^{\circ}$, where the multiplicity defined as in~\eqref{eq:def-mult} is independent of the chosen point $p$. 

\begin{theorem}\cite{Ma1}\label{thm:ACampo-QEmb}
Let $f: (X,0) \rightarrow (\C,0)$ be a non-constant regular function on a variety $X$. Let $Y = \{f=0\}$ be its associated hypersurface in $X$, and suppose that $\text{Sing}(X) \subset Y$. Consider an embedded $\Q$-resolution $\varphi: \tilde X \rightarrow X$ of $(Y,0)$. Using the notation above, the zeta function of monodromy of $f$ at $0$ is given by \[Z^{mon}_{f,0}(t) = \prod_{\underset{1 \leq l \leq s}{1 \leq j \leq r}} \left(1-t^{m_{j,l}}\right)^{\chi(E_{j,l}^{\circ})},\] where $\chi$ denotes the topological Euler characteristic.
\end{theorem}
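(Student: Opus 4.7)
The plan is to prove the formula by reduction to the classical A'Campo formula~\eqref{eq:ACampo-poly} via a further resolution of singularities. Starting from the $\Q$-resolution $\varphi: \tilde X \to X$, I would choose a proper birational morphism $\rho: \hat X \to \tilde X$ with $\hat X$ smooth, resolving the abelian quotient singularities of $\tilde X$ (for instance by equivariant toric resolutions of the local models $X(\d;A)$) and refined so that $\hat\varphi := \varphi \circ \rho$ is an honest embedded resolution of $(Y,0)$. Applying~\eqref{eq:ACampo-poly} on $\hat X$ yields
\[
Z^{mon}_{f,0}(t) \;=\; \prod_{k \in K}\bigl(1-t^{\hat N_k}\bigr)^{\chi(\hat E_k^\circ \cap \hat\varphi^{-1}(0))},
\]
where the $\hat E_k$ are the components of $\hat\varphi^{-1}(Y)$ with multiplicities $\hat N_k$ in $\hat\varphi^\ast f$. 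These components are of two kinds: strict transforms $\rho^{-1}_\ast(E_j)$ of the $E_j$ from $\tilde X$, and new exceptional divisors of $\rho$ supported over $\text{Sing}(\tilde X)$.

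I would then regroup this product by strata of $\tilde X$. By the hypothesis that the triple $(G, m_1, \ldots, m_k)$ describing the local equation of $f \circ \varphi$ is constant along each $E_I^\circ \cap Q_l$, the restriction of $\rho$ to an analytic neighbourhood of a stratum $E_{j,l}^\circ$ is, in the $V$-manifold sense, a locally trivial family over $E_{j,l}^\circ$ whose fibre is a fixed resolution of the transverse abelian quotient singularity. Using multiplicativity of the topological Euler characteristic in Zariski-locally trivial fibrations, the combined contribution from all $\hat E_k$ mapping into $E_{j,l}^\circ$ assembles into the form $\bigl(Z_0^{(j,l)}(t)\bigr)^{\chi(E_{j,l}^\circ)}$, where $Z_0^{(j,l)}(t)$ is the local monodromy zeta function, at the origin of the local model $B/G$, of the monomial $x_1^m$ with $m = m_{j,l}\, l_1$.

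The core step then reduces to the local identity
\[
Z_0^{(j,l)}(t) \;=\; 1 - t^{m/l_1} \;=\; 1 - t^{m_{j,l}},
\]
which I would prove by direct computation. The quotient map $B \to B/G$ is ramified along $\{x_1 = 0\}$ with ramification index precisely $l_1 = \lcm\bigl(d_i/\gcd(d_i,a_{1i})\bigr)$, which is the denominator appearing in~\eqref{eq:def-mult}. Consequently the Milnor fibre $\{x_1^m = \varepsilon\}$ upstairs, consisting of $m$ disjoint transverse slices, descends to $m/l_1$ connected components downstairs cyclically permuted by the monodromy, producing exactly the factor $1 - t^{m/l_1}$. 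Equivalently, one may apply classical A'Campo to a toric resolution of $B/G$ and verify that the Euler characteristic contributions from the new exceptional divisors vanish, leaving only the strict-transform contribution $(1 - t^{m/l_1})$. Multiplying the resulting local factors over all strata gives the formula in the theorem.

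The main obstacle lies in the last computation in the general abelian case: one must check carefully that the many exceptional divisors of a toric resolution of $B/G$ combine, via classical A'Campo and Euler-characteristic arithmetic, to produce the factor $1 - t^{m/l_1}$ and nothing else. Two ingredients make this tractable: the independence of $m(D,p)$ from the presentation $(\d;A)$, which lets one normalise the action via isomorphisms such as~\eqref{eq:normalizing}, and the straightforward one-variable case $X(d;a) \cong \C$ of Example~\ref{ex:quotient-space}, from which the general case can be bootstrapped by induction on the dimension and on the number of cyclic factors of $\mu_\d$.
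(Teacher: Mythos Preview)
The paper does not contain a proof of this theorem: it is quoted from~\cite{Ma1}, and the only justification given in the present article is the remark immediately following the statement, namely that in~\cite{Ma1} the formula was proven for $f$ a non-constant analytic function germ on a quotient space $\C^{n+1}/\mu_{\d}$ and that ``by exactly the same arguments'' it holds in the slightly more general setting used here. So there is no proof in the paper to compare your proposal against.

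That said, your outline is the natural strategy and is essentially the one used in~\cite{Ma1}: pass to a genuine embedded resolution dominating the $\Q$-resolution, apply the classical A'Campo formula there, and then push the contributions down stratum by stratum. Your identification of the key local computation --- that the transverse monomial $x_1^m$ on $B/G$ contributes exactly $1 - t^{m/l_1}$ --- is correct and is precisely what the definition of multiplicity in~\eqref{eq:def-mult} is designed to encode. The honest gap in your sketch is the ``locally trivial family'' claim: the resolution $\rho$ need not be a Zariski-locally trivial fibration over each stratum $E_{j,l}^\circ$, only analytically or \'etale-locally so, and you need that level of triviality (together with the multiplicativity of $\chi$ for such fibrations) to factor the Euler characteristics as you do. This is not hard to arrange once stated carefully, but it is the place where a reader would want more than a sentence.
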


In~\cite{Ma1}, this formula was proven for $f: (M,0) \rightarrow (\C,0)$ a non-constant analytic function germ on a quotient space $M = \C^{n+1}/\mu_{\d}$; by exactly the same arguments, this result can be obtained in our setting. Furthermore, for plane curve singularities in $\C^2$, this theorem was proven earlier in~\cite{Vey}, and if $\varphi: \tilde X \rightarrow X$ is an embedded resolution of $(Y,0)$, then we recover the classical formula~\eqref{eq:ACampo-poly} of A'Campo.


\section{Monodromy via generic embedding surfaces} \label{RedCurveSurface}

In this section, we will elaborate on how we can simplify the problem of computing the Verdier monodromy eigenvalues associated with a space monomial curve $Y \subset \C^{g+1}$ with $g \geq 2$ by considering $Y$ as a Cartier divisor on a generic embedding surface. As the results in this section are true for curves defined by a larger class of ideals, we state them in the following generalized setting; this makes them possibly useful to investigate the monodromy eigenvalues associated with other ideals in this class.

\vspace{14pt}

Consider a complete intersection curve $Y = V(\I)$ in $\C^{g+1}$ whose ideal $\I = (f_1, \ldots, f_g)$ is generated by a regular sequence $f_1, \ldots, f_g \in \C[x_0, \ldots, x_g]$, and whose singular set is $\text{Sing}(Y) = \{0\}$. We start with the construction of a \emph{generic embedding surface} of $Y$. For every set $(\lambda_2, \ldots, \lambda_g) $ of $g-1$ non-zero complex numbers, we introduce an affine scheme $S(\lambda_2,\ldots, \lambda_g)$ in $\C^{g+1}$ defined by
	\begin{equation} \label{eq:equations-S}
		\left\{\begin{array}{c c l l l}
		f_1& + & \lambda_2f_2  &  = 0 \\
		f_2& + & \lambda_3f_3 &= 0  \\
		& \vdots & &\\
		f_{g-1} & +  & \lambda_gf_g & = 0.\\
    	\end{array}\right.
	\end{equation}
The curve $Y$ is contained in every such $S(\lambda_2,\ldots, \lambda_g)$ and, because all $\lambda_i$ are non-zero, can be defined by just one equation $f_i = 0$ for some $i \in \{1,\ldots, g\}$. In other words, $Y$ is a Cartier divisor in $S(\lambda_2,\ldots, \lambda_g)$. Since every $S(\lambda_2,\ldots, \lambda_g)$ is given by $g-1$ equations in $\C^{g+1}$, the dimension of each of its irreducible components, as well as its own dimension, is at least two. The next proposition shows that for \emph{generic} coefficients $(\lambda_2,\ldots, \lambda_g)$ (i.e., the point $(\lambda_2,\ldots, \lambda_g)$ is contained in the non-empty complement of a specific closed subset of $(\C \setminus \{0\})^{g-1}$), the dimension of the scheme $S(\lambda_2,\ldots, \lambda_g)$ is exactly two. Even more, it is a surface, and we can call it a \emph{generic (embedding) surface} of $Y$. We also prove some extra properties which are needed later on. 

\begin{prop} \label{prop:S-surface}
For generic $(\lambda_2, \ldots, \lambda_g) \in (\C \setminus \{0\})^{g-1}$, the scheme $S(\lambda_2,\ldots, \lambda_g)$ is a normal equidimensional surface which is smooth outside the origin. 
\end{prop}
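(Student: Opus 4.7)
I would first observe that for every $\lambda\in(\C^*)^{g-1}$, the scheme $S:=S(\lambda_2,\ldots,\lambda_g)$, cut out by the equations $g_i:=f_i+\lambda_{i+1}f_{i+1}$ for $i=1,\ldots,g-1$, is equidimensional of dimension $2$ and Cohen--Macaulay. Krull's principal ideal theorem already gives that every irreducible component of $V(g_1,\ldots,g_{g-1})$ has dimension at least~$2$. For the reverse inequality, the crucial point is the set-theoretic identity
\[ V(g_1,\ldots,g_{g-1})\cap V(f_g)\;=\;V(f_1,\ldots,f_g)\;=\;Y, \]
obtained by triangular back-substitution: $f_g=0$ together with $g_{g-1}=f_{g-1}+\lambda_g f_g=0$ forces $f_{g-1}=0$, then $g_{g-2}=0$ forces $f_{g-2}=0$, and so on. Since $\dim Y=1$ and cutting by one extra equation can drop dimension by at most one, no component of $V(g_1,\ldots,g_{g-1})$ can have dimension $\geq 3$, so all components have dimension exactly~$2$. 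Thus $g_1,\ldots,g_{g-1}$ is a regular sequence, and $S$ is an equidimensional Cohen--Macaulay scheme of dimension~$2$.

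Next I would show that $S$ is smooth outside the origin for generic $\lambda$. Along $Y\setminus\{0\}$ smoothness holds for every $\lambda$: the Jacobian factorization
\[ \mathrm{Jac}_p(g_1,\ldots,g_{g-1})\;=\;M\cdot\mathrm{Jac}_p(f_1,\ldots,f_g), \]
where $M$ is the $(g-1)\times g$ matrix with $1$'s on the diagonal and $\lambda_{i+1}$ just to the right, has rank $g-1$ at $p\in Y\setminus\{0\}$, since $M$ has rank $g-1$ and $\mathrm{Jac}_p(f_1,\ldots,f_g)$ has rank $g$ by smoothness of $Y$ at~$p$. The main obstacle is the complement $S\setminus Y$, and I would handle it via generic smoothness. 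A fibre dimension count using $F^{-1}(0)=Y$ shows that the map $F=(f_1,\ldots,f_g):\C^{g+1}\to\C^g$ is dominant, so the projectivization
\[ \tilde F:\C^{g+1}\setminus Y\longrightarrow\P^{g-1},\qquad p\longmapsto[f_1(p):\cdots:f_g(p)] \]
is a dominant regular morphism. Generic smoothness in characteristic zero yields a dense open $U\subseteq\P^{g-1}$ over which every fibre of $\tilde F$ is smooth. As $\lambda$ varies over $(\C^*)^{g-1}$, the direction $[c(\lambda)]=[(-1)^{g-1}\lambda_2\cdots\lambda_g:\cdots:-\lambda_g:1]$ sweeps out the open dense subset of $\P^{g-1}$ with all projective coordinates non-zero, so the condition $[c(\lambda)]\in U$ defines a non-empty Zariski open subset of $(\C^*)^{g-1}$. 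Since the $c_i$ are non-zero, the ideal $(g_1,\ldots,g_{g-1})$ coincides with the ideal of $2\times 2$ minors of $\left(\begin{smallmatrix} c_1 & \cdots & c_g \\ f_1 & \cdots & f_g \end{smallmatrix}\right)$ in $\C[x_0,\ldots,x_g]$, which gives $\tilde F^{-1}([c(\lambda)])=S\setminus Y$ scheme-theoretically. Hence $S\setminus Y$ is smooth for such $\lambda$; in particular $S$ is generically reduced, and therefore reduced (being $S_1$).

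Finally, Serre's criterion $R_1+S_2$ yields normality: $S$ is $S_2$ since it is Cohen--Macaulay, and it is $R_1$ since its singular locus is contained in the zero-dimensional set $\{0\}$, which has codimension~$2$ in the surface~$S$. The delicate point in the whole argument is the Bertini step — both the scheme-theoretic identification $\tilde F^{-1}([c(\lambda)])=S\setminus Y$ and the verification that the family of directions $[c(\lambda)]$ genuinely meets the smoothness open~$U$ — both routine in spirit but needing care.
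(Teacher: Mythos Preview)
Your proof is correct and follows the same overall strategy as the paper: establish smoothness of $S\setminus Y$ via a Bertini-type argument, check smoothness along $Y\setminus\{0\}$ by the Jacobian criterion, and conclude normality from Serre's criterion. The execution differs in two respects worth noting. First, you prove equidimensionality of $S(\lambda)$ for \emph{every} $\lambda\in(\C^*)^{g-1}$ by the clean observation $S\cap V(f_g)=Y$, whereas the paper obtains equidimensionality only for generic $\lambda$ as a byproduct of the affine Bertini theorem it invokes; your argument is more elementary and gives a slightly stronger statement. Second, for the smoothness of $S\setminus Y$ the paper applies affine Bertini to the map $x\mapsto(-f_1/f_2,\ldots,-f_{g-1}/f_g)$ from $\C^{g+1}\setminus\bigcup_{i\geq 2}\{f_i=0\}$ to $\C^{g-1}$, whose fibre over $\lambda$ is exactly $S(\lambda)\setminus Y$, while you use generic smoothness for the projectivised map $\tilde F=[f_1:\cdots:f_g]$ to $\P^{g-1}$ and identify the fibre over $[c(\lambda)]$ with $S\setminus Y$ via the $2\times 2$ minor ideal. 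These two maps are essentially reparametrisations of one another, so the difference is cosmetic; your identification of the scheme-theoretic fibre is carefully done and correct.
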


\begin{proof}
We use the following affine version of Bertini's Theorem, which can be found in~\cite[Cor. 6.7]{J}.

\vspace{14pt}

\noindent \emph{Let $X$ be a smooth equidimensional variety of dimension $m$ and let $f: X \rightarrow \C^n$ be a dominant morphism of $\C$-schemes. Then, for a generic point $\xi \in \C^n$, the inverse image $f^{-1}(\xi)$ is a smooth equidimensional variety of dimension $m-n$.}

\vspace{14pt}

\noindent Consider $X := \C^{g+1} \setminus \bigcup_{i=2}^g\{f_i = 0\}$ and the morphism \[f: X \longrightarrow \C^{g-1}: x \mapsto \left(-\frac{f_1(x)}{f_2(x)},-\frac{f_2(x)}{f_3(x)}, \ldots, -\frac{f_{g-1}(x)}{f_g(x)}\right).\] Clearly, $X$ is a smooth irreducible variety of dimension $g+1$. To check that $f$ is dominant, it is enough to show that its image contains a dense subset of $\C^{g-1}$. Note that for every $\lambda = (\lambda_2, \ldots, \lambda_g) \in (\C \setminus \{0\})^{g-1}$, the inverse image $f^{-1}(\lambda)$ is exactly the scheme $S(\lambda_2,\ldots, \lambda_g)$ without the curve $Y$, which is never empty as $S(\lambda_2,\ldots, \lambda_g)$ is at least two-dimensional. Hence, the image $f(X)$ contains $(\C \setminus \{0\})^{g-1}$, and we can apply the above version of Bertini's Theorem; for generic $(\lambda_2, \ldots, \lambda_g) \in (\C \setminus \{0\})^{g-1}$, the scheme $S(\lambda_2,\ldots, \lambda_g) \setminus Y$ is a smooth equidimensional variety of dimension two. Because all irreducible components of $S(\lambda_2,\ldots, \lambda_g)$ have at least dimension two, it immediately follows that $S(\lambda_2,\ldots, \lambda_g)$ itself is also equidimensional of dimension two. Furthermore, using the Jacobian criterion, one can check that $S(\lambda_2,\ldots, \lambda_g)$ is smooth at every point in $Y \setminus \{0\}.$ These two facts together imply that $S$ is a complete intersection in $\C^{g+1}$ which is regular in codimension one (i.e., its singular locus has codimension at least two). As being regular in codimension one is equivalent to being normal for a complete intersection in $\C^{g+1}$ (see, e.g.,~\cite[Ch. II, Prop. 8.23]{Ha}), we can conclude that $S$ is indeed a normal equidimensional surface which is smooth outside the origin. 
\end{proof}

\begin{remark}\label{rmk:S-irreducible}
It is possible that a generic $S(\lambda_2,\ldots, \lambda_g)$ is irreducible; we did not find an easy argument or counterexample. This will, nevertheless, not have any influence on the results in this article: as $S(\lambda_2,\ldots, \lambda_g)$ is a normal equidimensional surface and smooth outside the origin, its irreducible components are pairwise disjoint surfaces, all smooth except for the single component containing the curve $Y$. Hence, because we are only interested in the behavior of $S(\lambda_2,\ldots, \lambda_g)$ around the curve $Y$, we can, in some sense, only consider the one component containing $Y$ and forget about the other components.
\end{remark}

We will now explain the relation between the monodromy eigenvalues of $Y$ considered in $\C^{g+1}$ and the monodromy eigenvalues of $Y$ considered on a generic surface $S(\lambda_2,\ldots, \lambda_g)$. At several places in this section, we will impose extra conditions on $(\lambda_2,\ldots, \lambda_g)$, but it will still represent a generic point of $(\C\setminus\{0\})^{g-1}$ in the end. To shorten the notation, from now on, we will denote a generic surface by $S$. 

\vspace{14pt}

Let $\varphi: \tilde{X} \rightarrow \C^{g+1}$ be a principalization of $\I$. We can assume that $\varphi$ consists of two parts: 
\begin{enumerate}
	\item[(i)] a composition of blow-ups $\varphi_1: \tilde{X}_1 \rightarrow \C^{g+1}$ above $0$ to desingularize the strict transform of $Y$, and to make it have normal crossings with one exceptional variety and no intersection with all other components of $\varphi_1^{-1}(0)$; and
	\item[(ii)] one last blow-up $\varphi_2: \tilde{X} \rightarrow \tilde{X}_1$ along the strict transform of $Y$ to change it into a locally principal divisor.
\end{enumerate}
The exceptional variety coming from the last blow-up is denoted by $\tilde{E}$ and has numerical data $(1,g)$. The other irreducible components of the total transform $\varphi^{-1}(Y)$ are denoted by $E_j$ for $j \in J$, and their corresponding data by $(N_j,\nu_j)$.  Note that $\tilde{E}$ is mapped surjectively onto $Y$ under $\varphi$ and that $\varphi^{-1}(0) = \cup_{j\in J} E_j$. Let $\sigma: X'  \rightarrow \C^{g+1}$ be the blow-up of $\C^{g+1}$ with center $Y$, let $E'$ be the corresponding exceptional variety, and let $\psi: \tilde{X} \rightarrow X'$ be the unique morphism such that $\sigma \circ \psi = \varphi$. It immediately follows that $\psi$ is a surjective proper birational morphism inducing an isomorphism $\tilde{X} \setminus \varphi^{-1}(Y) \simeq X' \setminus E'$. Because of the specific construction of the principalization, the morphism $\psi$ even induces an isomorphism $\tilde{X} \setminus \cup_{j \in J} E_j \simeq X' \setminus \sigma^{-1}(0)$; indeed, because $Y \setminus \{0\}$ remains unchanged during the first series of blow-ups, both $\sigma$ and $\varphi$ restricted to $\C^{g+1} \setminus \{0\}$ are just the blow-up along $Y \setminus \{0\}$, and they are thus equal up to an isomorphism. Furthermore, $\tilde{E}$ is sent surjectively onto $E'$ under $\psi$, while every other exceptional variety $E_j$ is mapped onto a closed subset of $\sigma^{-1}(0)$.

\vspace{14pt}

With this notation, the zeta function of monodromy associated with $Y \subset \C^{g+1}$ at a point $e \in \sigma^{-1}(0) \subset E'$ is given by 
	\begin{equation} \label{eq:zeta-function-mon-Y}
		Z_{Y,e}^{mon} (t)= \prod_{j \in J}(1-t^{N_j})^{\chi(E^{\circ}_j \cap \psi^{-1}(e))},
	\end{equation}
where $E_j^{\circ} = E_j \backslash \cup_{i \neq j}(E_i \cap E_j)$ for all $j \in J$, see Theorem~\ref{thm:ACampo-Princ}. We will show that this zeta function for a generic point $e \in \sigma^{-1}(0)$ is equal to the zeta function of monodromy at the origin associated with the Cartier divisor $Y$ on a generic surface $S$. 

\vspace{14pt}
 
We begin by considering the strict transform $S' :=  \overline{\sigma^{-1}(S \setminus Y)}$ of $S$ under $\sigma$. By the behavior of a subvariety under a blow-up, the restriction of $\sigma$ to this strict transform is the blow-up of $S$ along the Cartier divisor $Y \subset S$. Consequently, $S'$ is a surface isomorphic to $S$, and $Y' := E' \cap S'$ is a curve on $S'$ isomorphic to $Y$. This can also be deduced from the equations of the blow-up as follows. Because $\I$ is generated by a regular sequence, the blow-up of $\C^{g+1}$ with center $Y = V(\I)$ is given by the projection 
 	\begin{equation} \label{eq:equations-X'}
 	 	\sigma: X' = \text{Proj} \frac{\C[x_0, \ldots, x_g][X_1, \ldots, X_g]}{(f_iX_j - f_jX_i: ~i,j = 1, \ldots, g)} \longrightarrow \C^{g+1},
 	\end{equation}
 see for instance~\cite[Section IV.2]{EH}. In other words, $X'$ is the closed subscheme of $\text{Proj}~ \C[x_0, \ldots, x_g][X_1, \ldots, X_g] \simeq \C^{g+1} \times \P^{g-1}$ defined by the equations $f_iX_j - f_jX_i$ for $i,j = 1, \ldots, g$. The exceptional variety $E'$ is locally on $X_k \neq 0$ given by the principal ideal $(f_k)$ and glues globally to $Y \times \P^{g-1}$. Finally, the strict transform $S'$ is \[\text{Proj}~ \frac{\C[x_0, \ldots, x_g][X_1, \ldots, X_g]}{(f_iX_j - f_jX_i,~ X_k + \lambda_{k+1}X_{k+1} ; ~ i,j = 1, \ldots, g, ~ k = 1, \ldots, g-1)}.\] Since all $\lambda_i$ are non-zero, the system of equations $X_k + \lambda_{k+1}X_{k+1} = 0$ for $ k = 1, \ldots, g-1$ has a unique homogeneous solution, say $P = [p_1: \ldots : p_g] \in \P^{g-1}$. Note that all $p_i \neq 0$ and that $\frac{p_i}{p_{i+1}} = -\lambda_{i+1}$ for $i = 1,\ldots, g-1$. Hence, $S'$ can be rewritten as \[\text{Spec}~ \frac{\C[x_0,\ldots, x_g]}{(f_ip_j - f_jp_i; ~ i, j = 1,\ldots, g)} \times \{P\}  \subseteq \C^{g+1} \times \P^{g-1}.\] Using the relations between the numbers $p_i$, it is easy to see that this is the same as $S \times \{P\}$, so that $S'$ is indeed isomorphic to $S$ under $\sigma$. From this argument, it also follows that $Y' = Y \times \{P\}$ is isomorphic to $Y$.
 
 \vspace{14pt}
 
 The point $P \in \P^{g-1}$ is completely determined by the generic coefficients $(\lambda_2,\ldots, \lambda_g)$ and corresponds to a unique point $p := (0,P) = S' \cap \sigma^{-1}(0)$ on $S'$. We will call $p$ the \textit{generic point} associated with the generic surface $S$. As $\text{Sing}(S) = \text{Sing}(Y) = \{0\}$, we have $\text{Sing}(S') = \text{Sing}(Y') = \{p\}$, and we can use the classical formula~\eqref{eq:ACampo-poly} of A'Campo for the monodromy zeta function $Z^{mon}_{Y',p}(t)$ at $p$ of the Cartier divisor $Y'$ on the surface $S'$. We claim that this zeta function is equal to the monodromy zeta function $Z_{Y,p}^{mon} (t)$ given in~\eqref{eq:zeta-function-mon-Y} at the generic point $p \in \sigma^{-1}(0) \subset E'$. As a direct consequence, the latter zeta function of monodromy is equal to the zeta function of monodromy $Z^{mon}_{Y,0}(t)$ at the origin associated with $Y \subset S$.
 
 \vspace{14pt}

To compute the monodromy zeta function $Z^{mon}_{Y',p}(t)$ with A'Campo's formula, we need an embedded resolution of $Y'$ on $S'$. To construct such a resolution, we consider the strict transform $\tilde{S} := \overline{\varphi^{-1}(S \setminus Y)}$ of $S$ under the principalization $\varphi$, and we put $\tilde{Y} := \tilde E \cap \tilde S$. 

\begin{lemma}\label{lemma:tildeS-surface}
For generic $(\lambda_2,\ldots, \lambda_g) \in (\C\setminus \{0\})^{g-1}$, the strict transform $\tilde S$ of $S$ under $\varphi$ is a smooth equidimensional surface.
\end{lemma}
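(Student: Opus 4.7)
I would split the lemma into equidimensionality and smoothness, treating smoothness over the origin separately from smoothness over $\C^{g+1}\setminus\{0\}$. For equidimensionality, since $\varphi$ is an isomorphism over $\C^{g+1}\setminus Y$, the open subset $\tilde S\setminus\varphi^{-1}(Y)$ is isomorphic to $S\setminus Y$, which is equidimensional of dimension two by Proposition~\ref{prop:S-surface}, and taking closures preserves this. Similarly, smoothness of $\tilde S$ over $\C^{g+1}\setminus Y$ follows immediately from smoothness of $S\setminus Y$.

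Over a point $q \in Y\setminus \{0\}$, the first part $\varphi_1$ is a local isomorphism and $\varphi_2$ is the blow-up of $\C^{g+1}$ along the smooth curve $\tilde Y_1 \cong Y$. The strict transform of $S$ under this blow-up is the blow-up of $S$ along $Y$, and since $Y$ is a curve on the smooth surface $S$ it is a Cartier divisor there; blowing up along a Cartier divisor is an isomorphism, so $\tilde S\cong S$ locally near $\varphi^{-1}(q)$ and thus smooth.

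Smoothness at points of $\varphi^{-1}(0)$ is the main obstacle, which I would address via a Bertini--Sard (generic smoothness) argument applied to the incidence variety
\[\mathcal S := \{(x,\lambda) \in \tilde X \times (\C\setminus\{0\})^{g-1} \mid x \in \tilde S(\lambda)\},\]
whose projection $\pi_2 : \mathcal S \to (\C\setminus\{0\})^{g-1}$ has fiber $\tilde S(\lambda)$ over $\lambda$. Locally on $\tilde X$, using that $\varphi^*\I$ is principal with some generator $m$, I write $\varphi^* f_i = m\, u_i$ for regular functions $u_i$; the strict transform $\tilde S(\lambda)$ is then cut out by the $g-1$ equations $u_i + \lambda_{i+1}u_{i+1} = 0$ for $i = 1,\ldots, g-1$. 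Since $\pi_2$ is surjective ($\tilde S(\lambda)$ is nonempty for each $\lambda$), generic smoothness in characteristic zero reduces the lemma to showing that $\mathcal S$ itself is smooth.

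The core step, which is the heart of the argument, is the claim that at any $(p,\lambda_0)\in \mathcal S$, every $u_i(p)$ is nonzero. If some $u_j(p) = 0$ with $2 \leq j \leq g$, propagating the relations $u_i(p)+\lambda_{0,i+1}u_{i+1}(p) = 0$ both upward and downward (using $\lambda_{0,k}\neq 0$) forces $u_i(p) = 0$ for every $i$, contradicting $(u_1,\ldots,u_g) = \mathcal O_{\tilde X,p}$, which holds because $\varphi^*\I = (m)$ is generated by $(m\,u_1,\ldots,m\,u_g)$. The remaining case $u_1(p) = 0$, $u_2(p)\neq 0$ is excluded by the first defining equation $u_1+\lambda_2 u_2 = 0$. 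With every $u_i(p)$ nonzero, the Jacobian of the defining equations of $\mathcal S$ with respect to $(\lambda_2,\ldots,\lambda_g)$ is the invertible diagonal matrix $\mathrm{diag}(u_2(p),\ldots, u_g(p))$, establishing smoothness of $\mathcal S$ and completing the argument.
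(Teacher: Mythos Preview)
Your argument is correct and follows the same route as the paper's. Both proofs hinge on the observation that, writing $f_i^*=m\,u_i$ for a local generator $m$ of $\varphi^*\I$, the $u_i$ generate the unit ideal and hence have no common zero; your propagation argument then upgrades this to ``all $u_i\neq 0$ on $\tilde S$'', which is precisely the content of the paper's remark that $\bigcup_{i\geq 2}\{\tilde f_i=0\}\cap\tilde S=\varnothing$. The paper feeds this into the affine Bertini theorem quoted in Proposition~\ref{prop:S-surface} on each local chart, while you package the same step as an incidence variety plus generic smoothness with an explicit Jacobian check---these are equivalent formulations. Your separate treatment of points over $Y\setminus\{0\}$ (via ``blow-up along a Cartier divisor is an isomorphism'') is a pleasant geometric observation but not strictly needed, since the Bertini argument already handles all points of~$\tilde X$.
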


\begin{proof}
We first determine the local defining equations of $\tilde{S}$. After the principalization $\varphi$, the ideal $\I = (f_1, \ldots, f_g)$ is transformed into the locally principal ideal $\varphi^{\ast}\I = (f^*_1, \ldots, f^*_g)$ with $f_i^{\ast} = f_i \circ \varphi$ for $i =1,\ldots, g$. This means that in every point $x \in \tilde{X}$, we have local coordinates $y = (y_0,\ldots, y_g)$ such that $(f^*_1(y), \ldots, f^*_g(y)) = (h(y))$ for some generator $h(y)$. Then, on the one hand, there exist regular functions $\tilde f_1(y), \ldots, \tilde f_g(y)$ such that $f_i^*(y) = \tilde f_i(y)h(y)$ for all $i = 1, \ldots, g$, and, on the other hand, there exist regular functions $h_1(y), \ldots, h_g(y)$ such that $h(y) = \sum_{i=1}^g h_i(y)f^*_i(y)$. We can deduce that $1 = \sum_{i=1}^gh_i(y)\tilde f_i(y)$, and, in particular, that $\tilde f_1(y), \ldots, \tilde f_g(y)$ do not have common zeros. In addition, it follows that $\tilde S$ is locally given by equations of the form $\tilde f_1(y) + \lambda_2\tilde f_2(y) = \cdots = \tilde f_{g-1}(y) +\lambda_g \tilde f_g(y) = 0$, where the $\tilde{f}_i(y)$ have no common zeros. Now, locally around each point $x \in \tilde S$ in the smooth irreducible ${(g+1)}$-dimensional variety $\tilde X$, we can repeat the proof of Proposition~\ref{prop:S-surface} to conclude that $\tilde S \setminus \bigcup_{i=2}^g\{\tilde f_i(y) = 0\}$ for generic $(\lambda_2,\ldots, \lambda_g) \in (\C\setminus \{0\})^{g-1}$ is a smooth equidimensional variety of dimension two. Because the set $\bigcup_{i=2}^g\{\tilde f_i(y) = 0\}$ on $\tilde S$ is equal to the empty set of common zeros $\{\tilde f_1(y) = \tilde f_2(y) = \cdots = \tilde f_g(y) = 0\}$, we indeed found that $\tilde S$ is a smooth equidimensional surface for generic coefficients $(\lambda_2,\ldots, \lambda_g)$.
\end{proof}

\begin{remark}\label{rmk:tildeS-irreducible}
It is again not important whether $\tilde S$ for generic $(\lambda_2,\ldots, \lambda_g)$ is irreducible, cf. Remark~\ref{rmk:S-irreducible}. Even more, the surface $\tilde S$ is irreducible if and only if $S$ is. It is, however, important that there is only one component of $\tilde S$ which intersects $\psi^{-1}(Y')$.
\end{remark}

Throughout the rest of this section, we assume that the coefficients $(\lambda_2,\ldots, \lambda_g)$ are generic in $(\C \setminus \{0\})^{g-1}$ such that $S$ and $\tilde S$ satisfy the properties of Proposition~\ref{prop:S-surface} and Lemma~\ref{lemma:tildeS-surface}, respectively. To recapitulate, we visualize all morphisms and varieties in the following diagram: 

\begin{figure}[htb!] 
	\centering
	\includegraphics[width=\textwidth]{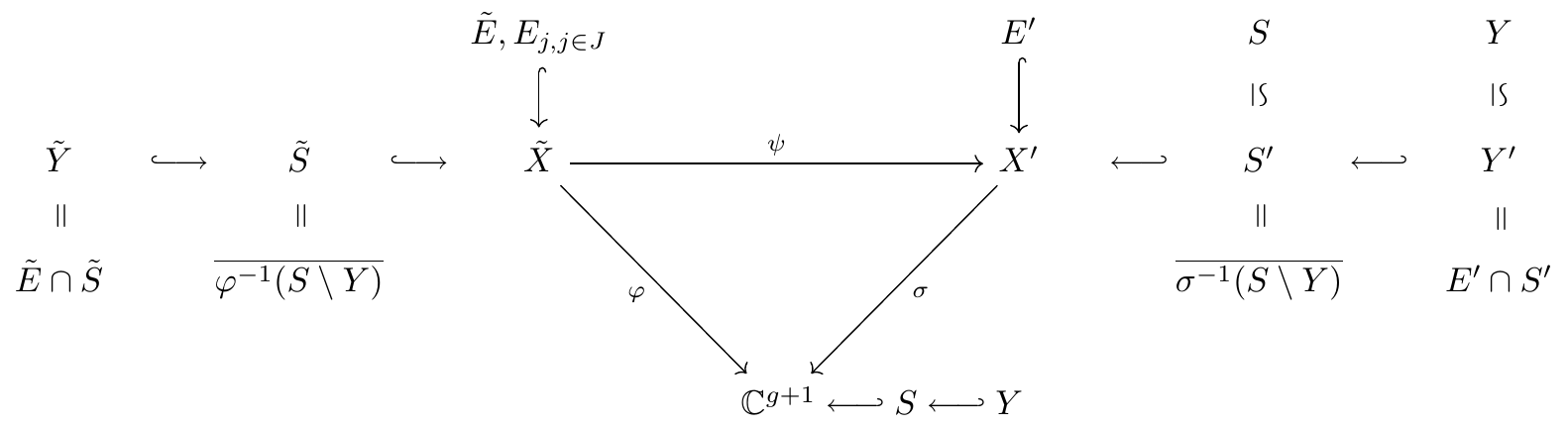}
\end{figure}

We will show that, under some extra conditions on $(\lambda_2,\ldots, \lambda_g)$, the restriction ${\rho: \tilde S \rightarrow S'}$ of $\psi$ to $\tilde S$ is an embedded resolution of $Y'$ on $S'$. Recall that every $E_j$ for $j \in J$ is mapped onto a closed subset of $\sigma^{-1}(0)$ under $\psi$. Let $J_1 \subset J$ be the set of indices $j \in J$ such that $E_j$ is mapped surjectively onto $\sigma^{-1}(0)$. Note that $J_1 \neq \emptyset$: the second last exceptional variety $E_k$ of $\varphi$, which is the only one intersecting $\tilde E$, will always be mapped surjectively onto $\sigma^{-1}(0)$ since $\tilde E$ is mapped surjectively onto $E'$ and $\tilde E \setminus (\tilde E \cap E_k) \simeq E' \setminus \sigma^{-1}(0)$. Then, every $E_j$ for $j \in J_2 := J \setminus J_1$ is mapped onto a proper closed subset $\psi(E_j)$ of $\sigma^{-1}(0) \simeq \P^{g-1}$, and the set $\sigma^{-1}(0)\setminus \cup_{j \in J_2} \psi(E_j)$ is non-empty. The next result tells us, among others, that for a generic surface $S$ corresponding to a generic point $p$ in the latter set, the surface $\tilde S$ is equal to $\psi^{-1}(S')$. This implies that the map $\rho: \tilde S \rightarrow S'$ is a well-defined proper surjective morphism from a smooth surface $\tilde S$ to $S'$, or thus, that $\rho$ is a good candidate for an embedded resolution of $Y'$ on $S'$. 

\begin{lemma} \label{lemma:generic-p} For a generic point $p \in \sigma^{-1}(0)\setminus \cup_{j \in J_2} \psi(E_j)$, we have that
\begin{enumerate} [wide, labelindent=0pt]
	\item[(i)] for all $j \in J_1$, the inverse image $\psi_j^{-1}(p)$ of $p$ under $\psi_j: E_j \rightarrow \sigma^{-1}(0)$ is smooth and equidimensional of dimension one; and
	\item[(ii)] the total inverse image $\psi^{-1}(p)$ of $p$ under $\psi: \tilde X \rightarrow X'$ is connected and equidimensional of dimension one. 
\end{enumerate}
Furthermore, for each surface $S$ corresponding to such a generic point $p$, the strict transform $\tilde S$ of $S$ under $\varphi$ is equal to $\psi^{-1}(S')$.
\end{lemma}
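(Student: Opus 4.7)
The plan is to establish the three assertions in turn, exploiting the explicit description of $\psi$ coming from the universal property of the blow-up $\sigma$.

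For statement (i), I would invoke generic smoothness. For each fixed $j \in J_1$, the map $\psi_j : E_j \to \sigma^{-1}(0) \cong \P^{g-1}$ is a surjective morphism between smooth irreducible varieties of dimensions $g$ and $g-1$, respectively; hence, in characteristic zero, the fiber $\psi_j^{-1}(p)$ is smooth of pure dimension $g-(g-1)=1$ over a dense open subset of $\sigma^{-1}(0)$. Intersecting these finitely many open sets over the finite index set $J_1$, and then with the non-empty open $\sigma^{-1}(0) \setminus \bigcup_{j \in J_2} \psi(E_j)$, produces a dense open locus of points $p$ satisfying (i) and the equidimensionality part of (ii), since for such $p$ we have $\psi^{-1}(p) = \bigcup_{j \in J_1} \psi_j^{-1}(p)$.

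Next I would prove the final identity $\tilde S = \psi^{-1}(S')$, which will also reduce the connectedness part of (ii) to a question on the surface. In a local chart of $\tilde X$ where $\varphi^{\ast}\I = (h)$ and $f_i^{\ast} = \tilde f_i \, h$, the map $\psi$ sends $q$ to $\bigl(\varphi(q),\, [\tilde f_1(q) : \cdots : \tilde f_g(q)]\bigr)$. Combining this with the description of $S'$ as $S \times \{P\}$ with $P = [p_1 : \cdots : p_g]$ satisfying $p_k + \lambda_{k+1} p_{k+1} = 0$, the condition $\psi(q) \in S'$ unwinds projective-coordinate-wise to the system $\tilde f_k(q) + \lambda_{k+1} \tilde f_{k+1}(q) = 0$ for $k = 1, \ldots, g-1$, which are exactly the local defining equations of $\tilde S$ established in the proof of Lemma~\ref{lemma:tildeS-surface}. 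The fact that the $\tilde f_i$ have no common zero forces $\tilde f_g(q) \neq 0$ at every common solution $q$, so $\varphi(q) \in S$ follows automatically and no extra irreducible components lurking inside $\varphi^{-1}(Y) = V(h)$ appear; together with the generic smoothness furnished by Lemma~\ref{lemma:tildeS-surface}, this yields $\psi^{-1}(S') = \tilde S$.

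For the connectedness of $\psi^{-1}(p)$ in (ii), I would restrict $\psi$ to $\tilde S$. Since $\psi^{-1}(p) \subseteq \tilde S$ by the previous paragraph, we have $\psi^{-1}(p) = (\psi|_{\tilde S})^{-1}(p)$, and $\psi|_{\tilde S} : \tilde S \to S'$ is a proper birational morphism from the smooth surface $\tilde S$ (Lemma~\ref{lemma:tildeS-surface}) to the normal surface $S' \cong S$ (Proposition~\ref{prop:S-surface}). Zariski's connectedness theorem --- equivalently, Stein factorization, whose finite factor is birational onto a normal target and hence an isomorphism --- then forces every fiber of $\psi|_{\tilde S}$ to be connected. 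The step I expect to be the main obstacle is the verification in the second paragraph that the zero scheme $V(\tilde f_k + \lambda_{k+1} \tilde f_{k+1} : k = 1, \ldots, g-1)$ has no spurious irreducible components inside the exceptional divisor $\varphi^{-1}(Y)$; the argument relies both on the absence of common zeros of the $\tilde f_i$ and on the generic smoothness and dimension count already built into Lemma~\ref{lemma:tildeS-surface}.
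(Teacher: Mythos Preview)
Your proposal is correct and takes a genuinely different route from the paper. The paper first proves connectedness of $\psi^{-1}(p)$ by applying Zariski's Main Theorem to the ambient map $\psi:\tilde X\to X'$ (after checking that $X'$, being a local complete intersection regular in codimension one, is normal), and only then deduces $\tilde S=\psi^{-1}(S')$ via a three-step topological argument: show $\psi^{-1}(p)\cap\tilde S\neq\emptyset$, then propagate component by component using the smoothness of each $\psi_j^{-1}(p)$ from (i), and finally invoke connectedness. You reverse the logic: you read off $\psi^{-1}(S')$ directly from the local formula $\psi(q)=(\varphi(q),[\tilde f_1(q):\cdots:\tilde f_g(q)])$, identify it with the zero locus of the $\tilde f_k+\lambda_{k+1}\tilde f_{k+1}$, and then apply Zariski's connectedness to the restricted map $\tilde S\to S'$.

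Your approach is arguably more transparent for the identity $\tilde S=\psi^{-1}(S')$ and sidesteps the paper's somewhat delicate component-chasing. The one point that needs to be made explicit is the step you flag as the main obstacle: that the zero locus $V$ of the $\tilde f_k+\lambda_{k+1}\tilde f_{k+1}$ has no components contained in $\varphi^{-1}(Y)$. The cleanest way to close this is to observe that $V\cap\varphi^{-1}(Y)=\psi^{-1}(S')\cap\psi^{-1}(E')=\psi^{-1}(Y')$, which decomposes as $\psi^{-1}(Y'\setminus\{p\})\cup\psi^{-1}(p)$; the first piece is isomorphic to the curve $Y'\setminus\{p\}$ and the second is one-dimensional by the equidimensionality already secured in your first paragraph. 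Since $V$ is equidimensional of dimension two by Lemma~\ref{lemma:tildeS-surface}, no component of $V$ lies in $\varphi^{-1}(Y)$, whence $V=\tilde S$. Note that the ``absence of common zeros of the $\tilde f_i$'' is what makes the projective coordinate $[\tilde f_1:\cdots:\tilde f_g]$ well-defined everywhere, but it is the dimension count on $\psi^{-1}(p)$, not the non-vanishing of $\tilde f_g$, that actually excludes spurious components. One minor caveat shared with the paper: since $S'$ and $\tilde S$ need not be irreducible (Remarks~\ref{rmk:S-irreducible} and~\ref{rmk:tildeS-irreducible}), your application of Zariski's connectedness should be made on the unique component containing $Y'$.
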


\begin{proof}
To prove items (i) and (ii), we will again apply a kind of Bertini's Theorem; this time, we use the following projective version obtained from~\cite[Cor. 6.11]{J}.

\vspace{14pt}

\noindent \emph{Let $X$ be a complex scheme of finite type which is equidimensional of dimension $m$, and let $f: X \rightarrow \P^n$ be a dominant morphism of $\C$-schemes. Then, for a generic point $\xi \in \P^n$, the inverse image $f^{-1}(\xi)$ is equidimensional of dimension $m-n$. If $X$ is in addition smooth, then the inverse image $f^{-1}(\xi)$ for a generic point $\xi$ is also smooth.}

\vspace{14pt}

The statement in (i) for each $j \in J_1$ follows immediately from this version of Bertini's Theorem applied to the surjective morphism $\psi_j: E_j \rightarrow \sigma^{-1}(0) \simeq \P^{g-1}$, where $E_j$ is a smooth irreducible hypersurface in $\tilde X$ of dimension $g$. For (ii), we consider the surjective morphism $\psi: \varphi^{-1}(0) \rightarrow \sigma^{-1}(0)$. As the irreducible components of $\varphi^{-1}(0)$ are the $g$-dimensional exceptional varieties $E_j$ for $j \in J$, this version of Bertini tells us that $\psi^{-1}(p)$ for a generic point $p$ is equidimensional of dimension one. To show the connectedness, we make use of Zariski's Main Theorem stating that a proper birational morphism ${f: X \rightarrow X'}$ between irreducible varieties with $X'$ normal has connected fibers. From the equations~\eqref{eq:equations-X'} of $X'$, it is easy to see that $X'$ is locally a complete intersection in $\C^{g+1}\times \P^{g-1}$. In fact, the blow-up of an affine space $\C^n$ along any subscheme defined by a regular sequence is a local complete intersection. Because $Y \setminus \{0\}$ is smooth, we know that $X' \setminus \sigma^{-1}(0)$ is smooth. Therefore, $X'$ is a local complete intersection in $\C^{g+1} \times \P^{g-1}$ which is regular in codimension one, and we can conclude that $X'$ is normal (see, e.g.,~\cite[Ch. II, Prop. 8.23]{Ha}). Hence, Zariski's Main Theorem for the proper birational morphism $\psi: \tilde X \rightarrow X'$ assures that every fiber is connected. In particular, the fiber of a generic point $p \in \sigma^{-1}(0)\setminus \cup_{j \in J_2} \psi(E_j)$ is connected, which ends the proof of (ii).

\vspace{14pt}

Let $S$ be a generic surface corresponding to such a generic point $p$. To show that $\tilde S = \psi^{-1}(S')$, we first rewrite $\tilde S = \overline{\varphi^{-1}(S \setminus Y)}$ as follows: \[\tilde S = \overline{\psi^{-1}(S' \setminus Y')} = \overline{\psi^{-1}(S' \setminus \{p\})}.\] The first equality immediately comes from the fact that $S'\setminus Y' = \sigma^{-1}(S \setminus Y)$ by the properties of the blow-up, together with the commutativity of the above diagram. The second equality can be seen from the next small argument. It is trivial that $\overline{\psi^{-1}(S'\setminus Y')}\subset \overline{\psi^{-1}(S' \setminus \{p\})}$. For the other inclusion, we remark that the closure of $S'\setminus Y'$ in  $X' \setminus \sigma^{-1}(0)$ is equal to $S'\setminus \{p\}$. Since $\psi$ induces an isomorphism $\tilde{X} \setminus \cup_{j\in J} E_j \simeq X' \setminus \sigma^{-1}(0)$, this implies that the closure of $\psi^{-1}(S' \setminus Y')$ in $\tilde X \setminus \cup_{j\in J} E_j$ must be equal to $\psi^{-1}(S'\setminus\{p\})$, which in turn implies the reverse inclusion $\overline{\psi^{-1}(S' \setminus \{p\})} \subset \overline{\psi^{-1}(S' \setminus Y')}$. The inclusion $\tilde S \subset \psi^{-1}(S')$ follows now easily from the continuity of $\psi$: \[\tilde S = \overline{\psi^{-1}(S' \setminus Y')} \subseteq \psi^{-1}(\overline{S' \setminus Y'}) =  \psi^{-1}(S').\] Using the third description of $\tilde S$ and the fact that $\psi$ is an isomorphism above $X' \setminus \sigma^{-1}(0)$, one can see that $\tilde S \setminus \psi^{-1}(p) = \psi^{-1}(S' \setminus \{p\})$. Hence, it remains to show that $\psi^{-1}(p) \subset \tilde S$. We do this in three steps.

\vspace{14pt}

First, we show that $\psi^{-1}(p) \cap \tilde S \neq \emptyset$. To this end, it is enough to show that $\overline{\psi^{-1}(Y'\setminus \{p\})}$ is not equal to $\psi^{-1}(Y'\setminus \{p\})$; indeed, both sets are contained in $\tilde S$, and the complement $\overline{\psi^{-1}(Y'\setminus \{p\})} \setminus \psi^{-1}(Y'\setminus \{p\})$ is contained in $\psi^{-1}(p)$ since $\psi$ is an isomorphism outside $\varphi^{-1}(0)$ and $\sigma^{-1}(0)$. Suppose that $\overline{\psi^{-1}(Y'\setminus \{p\})} = \psi^{-1}(Y'\setminus \{p\})$, or in other words, that $\psi^{-1}(Y'\setminus \{p\})$ is closed in $\tilde X$.  Then, the restriction $\psi\vert_{\psi^{-1}(Y'\setminus \{p\})}: \psi^{-1}(Y'\setminus \{p\})\rightarrow Y'$ of $\psi$ is proper so that $Y'\setminus \{p\} = \psi(\psi^{-1}(Y'\setminus \{p\}))$ is closed in $Y'$. This is a contradiction. Second, let $A$ be an irreducible component of $\psi^{-1}(p)$ such that $A \cap \tilde S \neq \emptyset$. We prove that $A$ is contained in $\tilde S$. Because $A \subset \psi^{-1}(p) \subset \cup_{j\in J_1} E_j$ is irreducible, there exists a component $E_j$ with $j \in J_1$ such that $A \subset E_j$. Then, the intersection $E_j \cap \tilde S$ is non-empty, and there exists an irreducible component $B$ of $E_j \cap \tilde S$ such that $A\cap B \neq \emptyset$. Note that both $A$ and $B$ are contained in $E_j \cap \psi^{-1}(p)= \psi_j^{-1}(p)$. We claim that they are also both irreducible components of $\psi_j^{-1}(p)$. Because $\psi_j^{-1}(p)$ is equidimensional of dimension one by (i), it is enough to show that $A$ and $B$ are one-dimensional. For $A$, this is trivial as it is an irreducible component of $\psi^{-1}(p)$. For $B$, this follows from the general intersection theory in the smooth $(g+1)$-dimensional variety $\tilde X$: the single component of $\tilde S$ that intersects $E_j$ (see Remark~\ref{rmk:tildeS-irreducible}) is two-dimensional and not contained in $E_j$. Hence, every irreducible component of the intersection of the surface $\tilde S$ and the hypersurface $E_j$ is one-dimensional. We thus found that $A$ and $B$ are irreducible components of $\psi_j^{-1}(p)$ that are intersecting. Because $\psi_j^{-1}(p)$ is smooth, this is only possible if $A = B$ is contained in $\tilde S$. Finally, as $\psi^{-1}(p)$ is connected, the whole of $\psi^{-1}(p)$ must be contained in $\tilde S$. 
\end{proof}

As a generic condition on the point $p \in \sigma^{-1}(0)$ translates into a generic condition on $(\lambda_2,\ldots, \lambda_g) \in (\C \setminus \{0\})^{g-1}$, we can rephrase Lemma~\ref{lemma:generic-p} in terms of generic $(\lambda_2,\ldots, \lambda_g)$, and consider $S$ and its strict transforms $S'$ and $\tilde S$ corresponding to such coefficients. In the next proposition, we show that $\rho: \tilde S \rightarrow S'$ is indeed an embedded resolution of $Y'$ on $S'$. We also determine the exceptional varieties and the part of their numerical data appearing in the formula of A'Campo.

\begin{prop} \label{prop:emb-res}
For generic $(\lambda_2,\ldots, \lambda_g) \in (\C \setminus \{0\})^{g-1}$, the restriction $\rho: \tilde S \rightarrow S'$ of $\psi$ to $\tilde S$ is an embedded resolution of $Y'$ on $S'$. The strict transform of $Y'$ is $\tilde Y$, and the exceptional varieties are the irreducible components of $E_j \cap \tilde S$ for $j \in J_1$. Furthermore, the pull-back of $Y'$ seen as a Cartier divisor on $S'$ is given by \[ \rho^{\ast}Y' = \tilde Y + \sum_{j\in J_1}N_j (E_j\cap \tilde S),\] which yields (the needed) part of the numerical data associated with this resolution. 
\end{prop}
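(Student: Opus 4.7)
The plan is to combine the two preceding lemmas with a transversality argument and a local computation of the pullback divisor. First I invoke Lemma~\ref{lemma:generic-p}: since $\tilde S = \psi^{-1}(S')$, the restriction $\rho = \psi|_{\tilde S} \colon \tilde S \to S'$ is automatically well-defined, proper, and surjective, and Lemma~\ref{lemma:tildeS-surface} makes $\tilde S$ smooth. The isomorphism $\tilde X \setminus \bigcup_{j \in J} E_j \simeq X' \setminus \sigma^{-1}(0)$ restricts to an isomorphism $\tilde S \setminus \rho^{-1}(p) \xrightarrow{\sim} S' \setminus \{p\} = S' \setminus \text{Sing}(Y')$, so conditions (i) and (ii) of an embedded resolution hold automatically; what remains is to identify $\rho^{-1}(Y')$ componentwise, to verify normal crossings, and to compute the multiplicities.

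Next, because $\psi(E_j) \subset \sigma^{-1}(0)$ and $S' \cap \sigma^{-1}(0) = \{p\}$, one has $E_j \cap \tilde S \subset \psi^{-1}(p)$; combined with the inclusion $\psi^{-1}(p) \subset \tilde S$ from Lemma~\ref{lemma:generic-p} this yields $E_j \cap \tilde S = \psi_j^{-1}(p)$, which is smooth and equidimensional of dimension one for $j \in J_1$ by Lemma~\ref{lemma:generic-p}(i), and empty for $j \in J_2$. For $\tilde E$ the situation is dual: $\psi^{-1}(Y' \setminus \{p\})$ is entirely contained in $\tilde E$ because the other exceptional components map into $\sigma^{-1}(0)$, so the strict transform of $Y'$ sits inside $\tilde Y := \tilde E \cap \tilde S$, and generically $\tilde Y \cap \rho^{-1}(p)$ is finite — only the unique $E_k$ meeting $\tilde E$ contributes, $\tilde E \cap E_k$ has codimension two in $\tilde X$, and a generic transversal $\tilde S$ cuts it in a finite set — so $\tilde Y$ has no component inside $\rho^{-1}(p)$ and coincides with the strict transform.

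For the normal-crossings property and the multiplicities I argue that for generic $(\lambda_2,\ldots,\lambda_g)$ the smooth surface $\tilde S$ meets every stratum of the simple-normal-crossings divisor $\tilde E + \sum_{j \in J} E_j$ on $\tilde X$ transversally, by iterating the Bertini-type argument of Lemmas~\ref{lemma:tildeS-surface} and~\ref{lemma:generic-p} on each intersection stratum; in particular $\tilde S$ avoids the codimension-three strata and cuts the codimension-two strata in a finite set. This transversality makes $\tilde Y$ and the irreducible components of $\psi_j^{-1}(p)$ for $j \in J_1$ smooth curves on $\tilde S$ meeting pairwise transversally, so $\rho^{-1}(Y')$ has normal crossings, and it ensures that every irreducible component of $E_j \cap \tilde S$ carries the same multiplicity $N_j$ as $E_j$ in any local Cartier equation pulled back from $\tilde X$ to $\tilde S$. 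The hard part will be exactly this step: controlling with a single generic choice of $\lambda$ both the transversality of $\tilde S$ to every intersection stratum of $\varphi^{-1}(Y)$ and the absence of components of $\tilde Y$ inside $\rho^{-1}(p)$.

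Finally, for the pullback divisor, $Y'$ is cut out on the chart $\{X_k \neq 0\}$ of $X'$ by $f_k \circ \sigma$, so $\rho^{*}Y'$ has local equation $f_k^{*}|_{\tilde S}$ with $f_k^{*} := f_k \circ \varphi$. Writing $f_k^{*} = \tilde f_k \cdot h$ for $h$ a local generator of $\varphi^{*}\I$, one has $\mathrm{div}(h) = \tilde E + \sum_{j \in J} N_j E_j$ on $\tilde X$, and the equations $\tilde f_i + \lambda_{i+1}\tilde f_{i+1} = 0$ of $\tilde S$ from the proof of Lemma~\ref{lemma:tildeS-surface} force each $\tilde f_k|_{\tilde S}$ to be a nowhere-vanishing scalar multiple of $\tilde f_g|_{\tilde S}$, since the $\tilde f_k$ have no common zero on $\tilde X$. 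Hence $\mathrm{div}(\tilde f_k|_{\tilde S}) = 0$, and combining this with the multiplicity count of the previous paragraph and the vanishing $E_j \cap \tilde S = \emptyset$ for $j \in J_2$ yields $\rho^{*}Y' = \tilde Y + \sum_{j \in J_1} N_j (E_j \cap \tilde S)$, as claimed.
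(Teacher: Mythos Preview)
Your argument is correct and follows the same overall architecture as the paper's proof, but diverges in two places. For the finiteness of $\tilde Y \cap \rho^{-1}(p)$ (equivalently, of $\tilde E \cap E_k \cap \tilde S$), you appeal to a generic-transversality Bertini argument on the codimension-two stratum $\tilde E \cap E_k$; the paper instead exploits the explicit two-stage construction of $\varphi$ to identify $\tilde E \cap E_k$ with the exceptional $\P^{g-1}$ of the last blow-up and to observe that $\psi\vert_{\tilde E \cap E_k}:\P^{g-1}\to\sigma^{-1}(0)\simeq\P^{g-1}$ has finite fibres, so $\tilde E \cap E_k \cap \psi^{-1}(p)$ is finite without any further genericity condition on $\lambda$. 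For the pull-back, you work locally with $f_k^* = \tilde f_k\cdot h$ and use the equations of $\tilde S$ to see that $\tilde f_k\vert_{\tilde S}$ is a unit; the paper bypasses this by the one-line functoriality $\rho^*Y' = \rho^*(E'\vert_{S'}) = (\psi^*E')\vert_{\tilde S}$ together with $\psi^*E' = \tilde E + \sum_{j\in J}N_jE_j$. Your route has the advantage that it makes the normal-crossings condition explicit (the paper is silent on this point), at the cost of having to impose and justify one more layer of genericity on $(\lambda_2,\ldots,\lambda_g)$ to get transversality along every stratum of $\varphi^{-1}(Y)$; the paper's route is shorter but tacitly uses that $E_j\vert_{\tilde S}$ is reduced, which is exactly the transversality you spell out.
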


\begin{proof}
The previous lemma already implies that $\rho: \tilde S \rightarrow S'$ is a well-defined surjective proper birational morphism from the smooth surface $\tilde S$ to $S'$. Additionally, $\rho$ induces an isomorphism $\tilde S \setminus \rho^{-1}(Y) \simeq S' \setminus Y'$: even more, because $\psi$ is an isomorphism above $X' \setminus \sigma^{-1}(0)$, its restriction $\rho$ gives an isomorphism $\tilde S \setminus \psi^{-1}(p) = \psi^{-1}(S'\setminus \{p\}) \simeq S'\setminus \{p\}$. The first equality follows from the third description of $\tilde S$ in the proof of Lemma~\ref{lemma:generic-p}. From the same lemma, we know that $E_j \cap \rho^{-1}(p) =  E_j \cap \tilde S$ for every $j \in J_1$, and that $E_j \cap \rho^{-1}(p)= \emptyset$ for $j \in J_2$. In other words, we have that $\rho^{-1}(p) = \cup_{j\in J_1}(E_j \cap \tilde S)$ or, thus, the irreducible components of $E_j \cap \tilde S$ for $j \in J_1$ are indeed the exceptional varieties of $\rho$. To show that $\tilde Y = \tilde E \cap \tilde S$ is the strict transform $\overline{\rho^{-1}(Y' \setminus \{p\}})$ of $Y'$ under $\rho$, we first remark that $Y' \setminus \{p\} \simeq \tilde Y \setminus \rho^{-1}(p) = (\tilde E \cap \tilde S) \setminus (\tilde E \cap E_k \cap \tilde S)$, where $E_k$ denotes the second last exceptional variety of $\varphi$, which is the only one intersecting $\tilde E$. Similarly as in Lemma~\ref{lemma:generic-p}, one can see that every irreducible component of $\tilde E \cap \tilde S$ is one-dimensional. Therefore, it suffices to show that $\tilde E \cap E_k \cap \tilde S$ only consists of a finite number of points. To this end, we recall the specific construction of the principalization $\varphi$ and let $\check{E}_k$ be the last exceptional variety of the first part $\varphi_1$, of which $E_k$ is the strict transform under the last blow-up $\varphi_2$. By the properties of the blow-up, we know that the restriction $\varphi_2\vert_{E_k}: E_k \rightarrow \check{E}_k$ is the blow-up of $\check{E}_k$ along its intersection with the strict transform of $Y$ under $\varphi_1$. As the latter intersection consists of a single point, the exceptional divisor of this blow-up is given by $\tilde E \cap E_k \simeq \P^{g-1}$. It follows that each fiber of the surjective morphism $\psi\vert_{\tilde E \cap E_k}: \tilde E \cap E_k \simeq \P^{g-1} \rightarrow \sigma^{-1}(0) \simeq \P^{g-1}$ is finite. In particular, we find that $\tilde E \cap E_k \cap \psi^{-1}(p) = \tilde E \cap E_k \cap \tilde S$ consists of a finite number of points. Finally, for the last claim, we consider the commutative diagram
\begin{figure}[h!]
	\centering
	\begin{tikzcd}
		\tilde{X} \arrow{r}{\psi} & X' \\
		\tilde{S} \arrow[hookrightarrow]{u} \arrow{r}{\rho}
		& S' \arrow[hookrightarrow]{u}.
	\end{tikzcd}
\end{figure} \\

From the properties of the pull-back, we know that $\rho^{\ast}Y' = \rho^{\ast}(E'\vert_{S'}) = (\psi^{\ast}E')\vert_{\tilde{S}}$. Because the inverse images $\psi^{-1}(E')$ and $\varphi^{-1}(Y)$ are equal, the pull-back of the Cartier divisor $E'$ is \[\psi^{\ast}E' = \tilde{E} + \sum_{j\in J}N_jE_j.\] Then, indeed, \[\rho^{\ast}Y' =  \tilde E\vert_{\tilde S} + \sum_{j\in J}N_jE_j\vert_{\tilde S} = \tilde Y + \sum_{j\in J_1} N_j(E_j \cap \tilde S),\] where we used that $E_j \cap \tilde S = E_j \cap \rho^{-1}(p) = \emptyset$ for $j \notin J_1$.
\end{proof}

We are now ready to apply A'Campo's formula for the monodromy zeta function $Z^{mon}_{Y',p}(t)$ of $Y' \subset S'$, and to show the main result of this section.  

\begin{theorem}\label{thm:red-to-on-curve-surface}
Consider a complete intersection curve $Y = V(\I) \subset \C^{g+1}$ whose ideal $\I = (f_1, \ldots, f_g)$ is generated by a regular sequence $f_1, \ldots, f_g \in \C[x_0, \ldots, x_g]$, and whose singular set is $\text{Sing}(Y) = \{0\}$. Let $S = S(\lambda_2,\ldots, \lambda_g)$ be a generic embedding surface of $Y$ defined by the equations~\eqref{eq:equations-S}, where the coefficients $(\lambda_2,\ldots, \lambda_g) \in (\C \setminus \{0\})^{g-1}$ are generic such that all previous results hold. Denote by $\sigma: X' \rightarrow \C^{g+1}$ the blow-up of $\C^{g+1}$ with center $Y$ and by $S'$ the strict transform of $S$ under $\sigma$. Then, the monodromy zeta function $Z^{mon}_{Y,p}(t)$ of $Y$ considered in $\C^{g+1}$ at the generic point $p = S' \cap \sigma^{-1}(0)$ is equal to the monodromy zeta function $Z^{mon}_{Y,0}(t)$ of $Y$ considered as a Cartier divisor on $S$ at the origin. Therefore, we refer to both zeta functions as the monodromy zeta function of $Y$. 
\end{theorem}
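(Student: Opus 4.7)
The plan is to compute both monodromy zeta functions via A'Campo-type formulas and to compare them factor by factor. First I would apply Theorem~\ref{thm:ACampo-Princ} to $Y \subset \C^{g+1}$ at the generic point $p$: observing that $E_j \cap \psi^{-1}(p) = \emptyset$ for $j \in J_2$ by the defining condition on $p$, this gives
\[
Z^{mon}_{Y,p}(t) = (1-t)^{\chi(\tilde E^\circ \cap \psi^{-1}(p))} \prod_{j \in J_1}(1-t^{N_j})^{\chi(E^\circ_j \cap \psi^{-1}(p))}.
\]
Next, since $S'$ is a normal surface with singular locus contained in $Y'$, the generalized A'Campo formula~\eqref{eq:ACampo-poly} applies to the Cartier divisor $Y'$ on $S'$; using the embedded resolution $\rho: \tilde S \to S'$ and its numerical data from Proposition~\ref{prop:emb-res}, this would yield
\[
Z^{mon}_{Y',p}(t) = (1-t)^{\chi(\tilde Y^\circ \cap \rho^{-1}(p))} \prod_{j \in J_1} \prod_{C}(1-t^{N_j})^{\chi(C^\circ \cap \rho^{-1}(p))},
\]
where $C$ runs over the irreducible components of $E_j \cap \tilde S$ and the superscript ``$\circ$'' denotes removal of intersections with all other components of $\rho^{-1}(Y')$ on $\tilde S$.

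The central step is to match the two expressions factor by factor using the equality $\tilde S = \psi^{-1}(S')$ from Lemma~\ref{lemma:generic-p}, which forces $\psi^{-1}(p) \subset \tilde S$ and hence $E^\circ_j \cap \psi^{-1}(p) = E^\circ_j \cap \tilde S \cap \psi^{-1}(p)$, and similarly for $\tilde E$. Since $\tilde E$ meets only the second-last exceptional variety $E_k$ among the other components of $\varphi^{-1}(Y)$, a direct set-theoretic comparison should give $\tilde E^\circ \cap \tilde S = \tilde Y^\circ$, matching the leading factors. For each $j \in J_1$, the sets $E^\circ_j \cap \tilde S$ and $\bigsqcup_C C^\circ$ differ only at self-intersection points of $E_j \cap \tilde S$ that lie on no other component of $\varphi^{-1}(Y)$; any such point in $\psi^{-1}(p)$ would make the fiber $\psi_j^{-1}(p) = E_j \cap \psi^{-1}(p)$ singular or of the wrong dimension, contradicting the smoothness and equidimensionality in Lemma~\ref{lemma:generic-p}(i). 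So the two sets agree on $\psi^{-1}(p)$, and the corresponding exponents match. Therefore $Z^{mon}_{Y,p}(t) = Z^{mon}_{Y',p}(t)$, and the isomorphism $\sigma\vert_{S'}: S' \to S$ identifying $Y'$ with $Y$ and $p$ with the origin finally gives $Z^{mon}_{Y',p}(t) = Z^{mon}_{Y,0}(t)$.

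The hard part will be the stratum comparison: the two A'Campo formulas are indexed differently---by $g$-dimensional exceptional hypersurfaces on $\tilde X$ versus by $1$-dimensional curves on $\tilde S$---so the nontrivial task is to verify that no correction terms appear when passing between them. The genericity of $(\lambda_2, \ldots, \lambda_g)$, controlled through Lemma~\ref{lemma:generic-p}(i), is precisely the tool that rules out the pathological intersection points which would otherwise spoil the factor-by-factor match.
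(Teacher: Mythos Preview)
Your proposal is correct and follows essentially the same route as the paper: both compute $Z^{mon}_{Y',p}(t)$ via the classical A'Campo formula using the embedded resolution $\rho:\tilde S\to S'$ of Proposition~\ref{prop:emb-res}, then identify the exponents with those in~\eqref{eq:zeta-function-mon-Y} via the equality $E_j\cap\tilde S=E_j\cap\psi^{-1}(p)$ coming from Lemma~\ref{lemma:generic-p}, and finally transport the result to $Y\subset S$ through the isomorphism $\sigma|_{S'}$. Your treatment is in fact slightly more explicit than the paper's on one point: you single out the potential discrepancy between $E_j^\circ\cap\tilde S$ and $\bigsqcup_C C^\circ$ caused by self-intersections of $E_j\cap\tilde S$, and you correctly rule it out using the smoothness of $\psi_j^{-1}(p)$ from Lemma~\ref{lemma:generic-p}(i), whereas the paper absorbs this into its definition of $(E_j\cap\tilde S)^\circ$ without comment; note also that the $(1-t)$ factors you carry for $\tilde E$ and $\tilde Y$ are harmless since $\tilde E^\circ\cap\psi^{-1}(p)=\emptyset=\tilde Y^\circ\cap\rho^{-1}(p)$, as $\psi^{-1}(p)\cap\tilde E\subset\tilde E\cap E_k$.
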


\begin{proof}
Let $E_k$ be the second last exceptional variety of the principalization $\varphi$ or, thus, the only one intersecting $\tilde E$. Then, the formula~\eqref{eq:ACampo-poly} of A'Campo with the embedded resolution $\rho: \tilde S \rightarrow S'$ of $Y'\subset S'$ from Proposition~\ref{prop:emb-res} gives \[Z^{mon}_{Y',p}(t) = \prod_{j\in J_1}(1-t^{N_j})^{\chi((E_j \cap \tilde S)^{\circ} \cap \rho^{-1}(p))} = \prod_{j\in J_1}(1-t^{N_j})^{\chi((E_j \cap \tilde S)^{\circ})},\] where 
\[(E_j \cap \tilde S)^{\circ} 
	= \left\{\begin{array}{ll} (E_j \cap \tilde S) \setminus \cup_{i\neq j} (E_i\cap E_j \cap \tilde S) 
	& \text{for } j \neq k \\(E_k \cap \tilde S) \setminus (\cup_{i\neq k} (E_i\cap E_k \cap \tilde S) \cup (\tilde E \cap E_k \cap \tilde S)) & \text{for } j = k.
\end{array}\right.\] 
By the choice of the generic point $p \in \sigma^{-1}(0) \setminus \cup_{j\in J_2}\psi(E_j)$ satisfying $E_j \cap \psi^{-1}(p) = E_j \cap \tilde S$ for $j \in J_1$, this is the same as the monodromy zeta function $Z^{mon}_{Y,p}(t)$ given in~\eqref{eq:zeta-function-mon-Y}. Because $0 \in Y \subset S$ is isomorphic to $p \in Y' \subset S'$ under $\sigma$, the theorem follows. 
\end{proof}


\section{Embedded \texorpdfstring{$\Q$}{Q}-resolution of a space monomial curve} \label{Resolution}

The purpose of this section is to construct an embedded $\Q$-resolution of a space monomial curve $Y$ considered as a Cartier divisor on a generic surface $S \subset \C^{g+1}$ with $g\geq 2$ satisfying all results in Section~\ref{RedCurveSurface}. We will also describe the combinatorics of the exceptional divisor that are needed to compute the monodromy zeta function of $Y$ in Section~\ref{ZetaFunction}.

\vspace{14pt}

Our method requires $g$ steps, denoted by Step $k$ for $k = 1,\ldots,g$, consisting of a weighted blow-up in higher dimension. Roughly speaking, in every step, we are able to eliminate one equation in $Y$ and $S$, and to lower the dimension of the ambient space by one. Therefore, the last step coincides with the resolution of a cusp in a Hirzebruch-Jung singularity of type $\frac{1}{d}(1,q)$. We will see that the resolution graph obtained in this process is a tree as in Figure~\ref{fig:dual-graph}, but that the exceptional varieties do not have zero genus in general. The latter implies that the link of the surface singularity $(S,0)$ is not always a rational nor an integral homology sphere. However, using this embedded $\Q$-resolution, one can obtain necessary and sufficient conditions for the link of $(S,0)$ to be a rational or integral homology sphere, see~\cite{MV}.

\subsection{Technical results}

We extract some results from the main construction that are interesting in their own right and discuss them in this section separately. 

\vspace{14pt}

A first challenge in the resolution will be to investigate the irreducible components of the exceptional divisor in each weighted blow-up. We will see that in Step $k$ for $k = 1,\ldots, g$, the exceptional divisor $\E_k$ can be described by a similar system of equations in the quotient of a weighted projective space $\P^r_{\omega}/\mu_{\d}$ that arises as the exceptional divisor of the ambient space. Except from the number of irreducible components, we are also interested in the singular points of $\E_k$, which lie on the coordinate hyperplanes $\{x_i = 0\}$ of $\P^r_{\omega}/\mu_{\d}$. Since our exceptional divisors will always have one common intersection point $A_k$ with the coordinate hyperplanes for $i = 2,\ldots, r$, we restrict in the following proposition to that case. In fact, the single intersection point $A_k = \E_k \cap \{x_i = 0\}$ for $i = 2,\ldots, r$ will be the center of the blow-up in the next step.

\begin{prop}\label{prop:number-of-comp}
Consider the quotient $\P^r_{(p_0, \ldots, p_r)}(d;a_0, \ldots, a_r) = \P^r_{(p_0, \ldots, p_r)}/\mu_{d}$ of some weighted projective space $\P^r_{(p_0, \ldots, p_r)}$ under an action of type $(d;a_0, \ldots, a_r)$ with $r\geq 2$. Let $\E$ be defined in this space by a system of equations 
	\[\left \{ \begin{array}{clccccl}
		x_0^{m_0} & + & x_1^{m_1}  &+& x_2^{m_2} &=& 0 \\
 		&  & x_2^{m_2}  &+& x_3^{m_3} &=& 0  \\
		& & & \vdots & & & \\
		& & x_{r-1}^{m_{r-1}}   &+& x_r^{m_r} &=& 0
	\end{array}\right. \]
for positive integers $m_i$ such that $d \mid a_im_i$ for $i = 0 ,\ldots, r$, and such that each equation is weighted homogeneous with respect to the weights $(p_0, \ldots, p_r)$. Assume that the intersection of $\E$ with $\{x_i = 0\}$ for $i = 2,\ldots, r$ only consists of one fixed point $A$, and that $a_ip_j - a_jp_i = 0$ for all $i,j \in \{2, \ldots, r\}$. Put $P := \prod_{i=2}^r p_i$, and $Q := a_i\prod_{j=2,j\neq i}^rp_j$ for $i = 2,\ldots, r$. Then, 
\begin{enumerate}[wide, labelindent=0pt]
    \item[(i)] \label{item:1-irred-comp} the number of irreducible components of $\E$ is equal to \[\frac{m_2\cdots m_r}{\lcm(m_2,\ldots, m_r)};\]  
    \item[(ii)] all irreducible components of $\E$ have the point $A$ in common and are pairwise disjoint outside $A$; and
    \item[(iii)] each irreducible component has\[\frac{m_1\cdot \gcd\big(dP\cdot(p_1,p_2,\ldots, p_r),(a_1P - p_1Q)\cdot(p_2,\ldots, p_r)\big)}{dP\cdot \gcd(p_2,\ldots, p_r)}\] intersections with $\{x_0 = 0\}$, and \[\frac{m_0\cdot \gcd\big(dP\cdot(p_0,p_2,\ldots, p_r),(a_0P-p_0Q)\cdot(p_2,\ldots, p_r)\big)}{dP\cdot \gcd(p_2,\ldots, p_r)}\] intersections with $\{x_1 = 0\}$. 	
\end{enumerate}
\end{prop}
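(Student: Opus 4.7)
The strategy is to first concentrate on the open torus part $\E^\circ := \E \cap \{x_0x_1 \cdots x_r \neq 0\}$ via a monodromy calculation, and then extend to $\E$ by analysing boundary behaviour. The key reduction uses the chain structure: from the equations $x_i^{m_i} + x_{i+1}^{m_{i+1}} = 0$ for $i \geq 2$ one deduces $y_i = (-1)^i y_2$ for all $i \geq 2$, where $y_i := x_i^{m_i}$, so the substitution $(x_i) \mapsto (y_i)$ sends $\E$ into a $2$-dimensional linear subspace $L$ of $\C^{r+1}$, with the first equation reading $y_0 + y_1 + y_2 = 0$. Its torus part $L^\circ$ is the complement in a complex plane of three lines through the origin (namely $\{y_0 = 0\}$, $\{y_1 = 0\}$, $\{y_2 = 0\}$), so $\pi_1(L^\circ) \cong \mathbb{Z} \times F$ with $F$ the free group on two generators.

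The second step is a Galois covering-space computation. The map $\pi : (x_i) \mapsto (x_i^{m_i})$ on $(\C^*)^{r+1}$ is a Galois cover of degree $\prod_i m_i$ with group $G = \prod_{i = 0}^r \mu_{m_i}$, and its restriction $\tilde\E^\circ \to L^\circ$ is Galois with monodromy image $H \subseteq G$ generated by the loops around the three coordinate lines. The loops around $\{y_i = 0\}$ for $i \in \{0,1\}$ each contribute the factor $\mu_{m_i}$, while the loop around $\{y_2 = 0\}$ simultaneously rotates $x_2, x_3, \ldots, x_r$ (since all $y_i$ with $i \geq 2$ are proportional to $y_2$), producing a cyclic subgroup $C \subseteq \prod_{i \geq 2}\mu_{m_i}$ of order $L := \lcm(m_2, \ldots, m_r)$. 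Hence $H = \mu_{m_0} \times \mu_{m_1} \times C$ and $\tilde\E^\circ$ has $[G : H] = m_2 \cdots m_r/L$ connected components.

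The third step is to descend to the quotient $\P^r_\omega(d;a_0,\ldots,a_r) = (\C^{r+1}\setminus\{0\})/(\C^*_\omega \times \mu_d)$. Connectedness of $\C^*_\omega$ means it preserves each component of $\tilde\E^\circ$. The subtler point is that $\mu_d$ also preserves each component: the divisibility $d \mid a_im_i$ yields an embedding $\mu_d \hookrightarrow G$, $\xi \mapsto (\xi^{a_0}, \ldots, \xi^{a_r})$, and showing $\mu_d \subseteq H$ reduces to placing the tail ($i \geq 2$) of each such element inside $C$. Here the weighted homogeneity of the equations forces a common value $p_im_i = c$ for all $i$, and combined with $a_ip_j = a_jp_i$ for $i, j \geq 2$ this makes $a_im_i$ independent of $i \geq 2$, which is exactly what is needed. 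The component count therefore transfers to $\E^\circ$, and a short separate check (using the standing hypothesis on $\E \cap \{x_i = 0\}$ for $i \geq 2$ and the first equation for $i \in \{0,1\}$) rules out components lying entirely inside a coordinate hyperplane, proving (i).

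For (ii) and (iii), parametrise each component of $\tilde\E^\circ$ by $t \mapsto (\alpha_i t^{L/m_i})_i$, with constants $\alpha_i$ satisfying $\alpha_{i+1}^{m_{i+1}} = -\alpha_i^{m_i}$ and chosen up to the $\C^*_\omega \times \mu_d$-freedom. As $t \to 0$ every $x_i$ with $i \geq 2$ vanishes, so each component's closure passes through $A$; and two components with distinct $H$-orbit labels in the fibre of $\pi$ can only coincide over the origin of $L$, i.e.\ at $A$. This gives (ii). For (iii), pass to an affine chart $V_j = \{x_j \neq 0\}$ for some $j \geq 2$ of the ambient and count, on each component, the roots of $x_0^{m_0} = \mathrm{const}$ (resp.\ $x_1^{m_1} = \mathrm{const}$) modulo the residual abelian action; this counting yields the stated $\gcd$-expressions involving $P$, $Q$, $d$ and the weights. \textbf{The main obstacle} is the $\mu_d$-descent step: verifying $\mu_d \subseteq H$ requires using the integrality $d \mid a_im_i$, the weighted homogeneity $p_im_i = c$, and the alignment $a_ip_j = a_jp_i$ simultaneously, and the explicit intersection count in (iii) is then a careful bookkeeping of orbits in the chosen chart of $\P^r_\omega(d;a_0,\ldots,a_r)$.
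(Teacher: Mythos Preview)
Your monodromy/covering-space argument for (i) is a genuinely different and elegant route compared to the paper's. The paper works directly in the affine chart $\{x_2\neq 0\}$: there the components of $\E\setminus\{A\}$ are explicitly the slices $\E_b=\{[(x_0,x_1,b_3,\dots,b_r)]\mid x_0^{m_0}+x_1^{m_1}+1=0\}$ indexed by solutions $b$ of the chain equations in the residual cyclic quotient, and their number is read off from the solution-counting Lemma~\ref{lemma:number-of-solutions}. Your approach trades this explicit bookkeeping for a clean statement about the monodromy representation of $\pi_1$ of a thrice-punctured plane; the key step---checking $\mu_d\subseteq H$ via the observation that $a_im_i$ is independent of $i\geq 2$ (from $p_im_i=\mathrm{const}$ together with $a_ip_j=a_jp_i$)---is correct and packages the hypotheses nicely.

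There is, however, a genuine gap in your argument for (ii). The map $t\mapsto(\alpha_i t^{L/m_i})_i$ does not parametrise a component: each component of $\tilde\E^\circ$ is two-dimensional, and each component of $\E^\circ$ is (in the chart) a plane curve $x_0^{m_0}+x_1^{m_1}+1=0$, not rational in general. More fatally, since $p_im_i$ equals the same constant $c$ for all $i$, your path is a single $\C^*_\omega$-orbit (write $t^{L/m_i}=(t^{L/c})^{p_i}$) and hence descends to a \emph{constant} point of $\P^r_\omega(d)$; letting $t\to 0$ therefore says nothing about projective limits. Likewise, the claim that ``two components coincide only over the origin of $L$'' mislocates the intersection: over $0\in L$ lies only the removed cone vertex, whereas $A$ sits over the locus $\{y_2=0,\ y_0+y_1=0\}\subset L$. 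The paper's route to (ii) is to observe, still in the chart $\{x_2\neq 0\}$, that the $\E_b$ are visibly pairwise disjoint (distinct slices in the $(b_3,\dots,b_r)$-direction), and that each closure $\overline{\E_b}$, being a projective curve whose piece in this chart is affine and non-complete, must meet $\{x_2=0\}$---where by hypothesis the only point of $\E$ is $A$. Your sketch for (iii) (pass to a chart $\{x_j\neq 0\}$ with $j\geq 2$ and count orbits) is essentially the paper's argument, carried out there via Lemma~\ref{lemma:number-of-solutions} after an isomorphism reducing to a single cyclic quotient.
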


Computing the numbers in (i) and (iii) relies on counting the number of solutions of a system of polynomial equations in a cyclic quotient space such as in the next result.

\begin{lemma}\label{lemma:number-of-solutions}
Let $X$ be a cyclic quotient space $X(d;a_0, \ldots, a_r)$ with $r \geq 0$ and let $k_i$ for $i = 0, \ldots, r$ be
positive integers such that $d \mid a_ik_i$ for every $i = 0, \ldots, r$. Consider the system of equations
	\[\left\{\begin{array}{ccl}
		x_0^{k_0} & =& c_0 \\
		x_1^{k_1} & =& c_1 \\
		& \vdots & \\
		x_r^{k_r} & =& c_r,
	\end{array}\right.\]
where $c_i \in \C \setminus \{0\}$. If $r \geq 1$, then the number of solutions in $X$ of the form $[(x_0,b_1,\ldots, b_r)]$ with $[(b_1,\ldots, b_r)] \in X(d;a_1,\ldots, a_r)$ fixed is equal to \[\frac{k_0\cdot \gcd(d,a_0, \ldots, a_r)}{\gcd(d,a_1,\ldots, a_r)}.\] The total number of solutions for $r \geq 0$ is equal to \[\frac{k_0\cdots k_r\cdot\gcd(d,a_0, \ldots, a_r)}{d}.\]
\end{lemma}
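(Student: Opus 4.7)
The plan is to exploit the $\mu_d$-equivariance of the whole system $\{x_i^{k_i}=c_i\}$ under the action $\xi\cdot(x_0,\ldots,x_r)=(\xi^{a_0}x_0,\ldots,\xi^{a_r}x_r)$. The first observation is that the hypothesis $d\mid a_ik_i$ is exactly what makes each monomial $x_i^{k_i}$ invariant under this action, so the equations descend to $X$ and the solution set $\tilde Z\subset\C^{r+1}$, of size $k_0\cdots k_r$ since all $c_i\neq 0$, is $\mu_d$-stable. Counting solutions in $X$ then reduces to counting $\mu_d$-orbits on $\tilde Z$.

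The key uniform tool will be a stabilizer calculation: for any $(x_0,\ldots,x_r)\in(\C\setminus\{0\})^{r+1}$, an element $\xi\in\mu_d$ fixes this point iff $\xi^{a_i}=1$ for every $i$, iff the order of $\xi$ divides $M:=\gcd(d,a_0,\ldots,a_r)$. Since $\tilde Z\subset(\C\setminus\{0\})^{r+1}$, every $\mu_d$-orbit on $\tilde Z$ has the same size $d/M$, so the total number of orbits equals $k_0\cdots k_r\cdot M/d$. This immediately yields the second formula.

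For the fiberwise count I would set $m:=\gcd(d,a_1,\ldots,a_r)$, so that $M=\gcd(m,a_0)$. The same stabilizer argument shows that the isotropy of a fixed $(b_1,\ldots,b_r)\in(\C\setminus\{0\})^{r}$ in $\mu_d$ equals $\mu_m$, and every class in $X$ lying over $[(b_1,\ldots,b_r)]\in X(d;a_1,\ldots,a_r)$ admits a representative $(x_0,b_1,\ldots,b_r)$, unique up to the residual $\mu_m$-action $x_0\mapsto\xi^{a_0}x_0$. The image of the character $\mu_m\to\C\setminus\{0\}$, $\xi\mapsto\xi^{a_0}$, is $\mu_{m/M}$, so the fiber is identified with $\C/\mu_{m/M}$. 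Intersecting with $\{x_0^{k_0}=c_0\}$, the problem reduces to counting $\mu_{m/M}$-orbits on a $\mu_{k_0}$-torsor of $k_0$ elements in $\C\setminus\{0\}$.

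The main obstacle is the number-theoretic step $(m/M)\mid k_0$, which ensures $\mu_{m/M}\subseteq\mu_{k_0}$ and makes the multiplication action well-defined on the solution set. I would deduce it from $m\mid d\mid a_0k_0$ together with $m\mid mk_0$, which jointly give $m\mid\gcd(a_0k_0,mk_0)=\gcd(a_0,m)\cdot k_0=Mk_0$. Once this divisibility is in hand, $\mu_{m/M}$ acts freely by multiplication on the $k_0$ solutions, producing $k_0/(m/M)=k_0M/m$ orbits, which is precisely the first formula. Everything else is routine orbit arithmetic once the invariance and stabilizer facts are in place.
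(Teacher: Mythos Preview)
Your argument is correct and, for the fiberwise count, essentially coincides with the paper's: both fix a representative $(b_1,\ldots,b_r)$, identify the residual symmetry as $\mu_m$ with $m=\gcd(d,a_1,\ldots,a_r)$ acting via $x_0\mapsto\xi^{a_0}x_0$, and count $\mu_{k_0}$ modulo the image of this character. The paper phrases this as $\mu_{k_0}/\mathrm{Im}\,h$ with $h:\mu_m\to\mu_{k_0}$, $\eta\mapsto\eta^{a_0}$, and computes $|\mathrm{Im}\,h|=m/M$ via the kernel; you compute the image directly and make the well-definedness condition $(m/M)\mid k_0$ explicit, which the paper leaves implicit.

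The one genuine difference is the total count. The paper deduces it by induction on $r$, feeding the fiberwise formula into the induction step. You instead observe that every point of $\tilde Z\subset(\C\setminus\{0\})^{r+1}$ has stabilizer exactly $\mu_M$, so all $\mu_d$-orbits have the same size $d/M$, and the orbit count drops out in one line. This is cleaner and avoids induction altogether; the paper's route has the mild advantage of reusing the first part, but yours gives a more transparent reason for the formula.
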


\begin{proof}
For $r \geq 1$, the solutions with $[(b_1, \ldots, b_r)] \in X(d;a_1,\ldots, a_r)$ fixed can be written as $[(\xi b_0,b_1,\ldots, b_r)]$ for some fixed $k_0$th root $b_0$ of $c_0$ and varying $\xi \in \mu_{k_0}$. Two elements $\xi$ and $\xi'$ in $\mu_{k_0}$ yield the same solution if and only if there exists a $d$th root $\eta \in \mu_d$ such that $\xi b_0 = \eta^{a_0}\xi'b_0$ and $b_i = \eta^{a_i} b_i$ for $i = 1,\ldots, r$ or, thus, if and only if there exists an element $\eta \in \mu_d \cap \mu_{a_1} \cap \cdots \cap \mu_{a_r} = \mu_{\gcd(d,a_1,\ldots, a_r)}$ such that $\xi \xi'^{-1}= \eta^{a_0}$. It follows that the solutions of the above form are in bijection with $\mu_{k_0}/\text{Im}\,h$ where $h$ is the well-defined group homomorphism $h: \mu_{\gcd(d,a_1,\ldots, a_r)} \rightarrow \mu_{k_0}$ given by $\eta \mapsto \eta^{a_0}.$ As $\text{Im}\,h$ is isomorphic to $\mu_{\gcd(d,a_1,\ldots, a_r)}/\text{Ker}\,h$ and $\text{Ker}\,h = \mu_{\gcd(d,a_0,\ldots, a_r)}$, we obtain the right number of solutions. The total number of solutions for $r \geq 0$ can be shown by an induction argument, using the first part of the lemma in the induction step.
\end{proof}

\begin{proof}[Proof of Proposition~\ref{prop:number-of-comp}]
We start with the case where $r \geq 3$ and we determine the irreducible components of $\E$ by first identifying the irreducible components of $\E\setminus \{A\}$. To find the components of $\E \setminus \{A\}$, we consider the chart of $\P^r_{(p_0, \ldots, p_r)}(d;a_0, \ldots, a_r)$ where $x_2 \neq 0$ which is given by \[X\left(\begin{array}{c|ccccc} 
		p_2 & p_0 & p_1 &  p_3 & \ldots & p_r \\
		dp_2& A_0 & A_1  & 0 & \ldots & 0
		\end{array} \right),\] 
with $A_0 = a_0p_2 - a_2p_0$ and $A_1 = a_1p_2 - a_2p_1$, see~\eqref{eq:chart-weighted-proj-space-with-action}. On this chart, the equations of $\E$ become 
	\[\left \{ \begin{array}{clccccl}
    	x_0^{m_0} & + & x_1^{m_1}  &+& 1 &=& 0 \\
 		&  & 1  &+& x_3^{m_3} &=& 0  \\
		& & & \vdots & & & \\
		& & x_{r-1}^{m_{r-1}}   &+& x_r^{m_r} &=& 0.
	\end{array}\right.\]
For a fixed solution $b = [(b_3,\ldots, b_r)]$ in $X(p_2;p_3,\ldots, p_r)$ of the last $r-2$ equations, we denote by $\E_b$ the set $\{[(x_0,x_1,b_3,\ldots, b_r) ] \mid x_0^{m_0} + x_1^{m_1} + 1 = 0\}$. By the second part of Lemma~\ref{lemma:number-of-solutions}, the number of such solutions $b \in X(p_2;p_3,\ldots, p_r)$ is given by 
	\begin{equation}\label{eq:number-of-comp}
    	\frac{m_3\cdots m_r\cdot \gcd(p_2, \ldots, p_r)}{p_2}.
	\end{equation}
It is not hard to see that every $\E_b$ is irreducible and that all these sets are pairwise disjoint. In other words, the irreducible components of $\E \setminus \{A\}$ are the sets $\E_b$ for each solution $b \in X(p_2;p_3,\ldots, p_r)$ of $1 + x_3^{m_3} = \cdots = x_{r-1}^{m_{r-1}} + x_r^{m_r} = 0$. One can also show that $A$ is contained in each closure $\overline{\E}_b$ in $\P^r_{(p_0, \ldots, p_r)}(d;a_0, \ldots, a_r)$ or, thus, that all $\overline{\E}_b = \E_b \cup \{A\}$ are the irreducible components of $\E$. Hence, the number of components of $\E$ is given by~\eqref{eq:number-of-comp}, which can be rewritten as the expression in the proposition by using the relation~\eqref{eq:rel-gcd-lcm}. Furthermore, all $\overline{\E}_b$ contain the point $A$ and are pairwise disjoint outside $A$, proving (ii). To show the last part of the proposition, we still work on the chart where $x_2 \neq 0$: the point $A$ is not contained in the intersection $\E \cap \{x_i = 0\}$ for $i = 0,1$. We thus need to compute the number of intersections of each component $\E_b = \{[(x_0,x_1,b_3,\ldots, b_r) ] \mid x_0^{m_0} + x_1^{m_1} + 1 = 0\}$ with $\{x_0 = 0\}$ and $\{x_1 = 0\}$. For the first intersection, this reduces to counting the number of points in $X \left(\begin{smallmatrix} p_2 \\ dp_2\end{smallmatrix} \middle| \begin{smallmatrix}  p_1 & p_3 & \cdots & p_r \\  A_1 & 0 & \cdots & 0 \end{smallmatrix} \right)$ of the form $[(x_1,b_3,\ldots, b_r)]$ with $x_1^{m_1} + 1= 0$ and $[(b_3,\ldots, b_r)]$ a fixed solution of $1 + x_3^{m_3} = \cdots = x_{r-1}^{m_{r-1}} + x_r^{m_r} = 0$ in $X(p_2;p_3,\ldots, p_r)$. This can be further simplified with the isomorphism, see Example~\ref{ex:quotient-space}, 
	\begin{equation}\label{eq:isom-to-one-line}
   		X\left(\begin{array}{c|cccc} 
		p_2 & p_1 &  p_3 & \ldots & p_r \\
		dp_2 & A_1  & 0 & \ldots & 0
		\end{array} \right)
		\simeq X \left(p_2; \frac{dp_1p_2}{\gcd(dp_2,A_1)}, p_3, \ldots, p_r\right) 
	\end{equation} 
defined by $[(x_1,x_3,\ldots, x_r)]  \mapsto [(x_1^{\frac{dp_2}{\gcd(dp_2,A_1)}},x_3,\ldots, x_r)]$ to counting the number of points in $X\big(p_2; \frac{dp_1p_2}{\gcd(dp_2,A_1)}, p_3, \ldots, p_r\big)$ of the form $[(x_1,b_3,\ldots, b_r)]$ with $x_1^{\frac{m_1\gcd(dp_2,A_1)}{dp_2}} + 1 = 0$ and $[(b_3,\ldots, b_r)]$ a fixed solution of $1 + x_3^{m_3} = \cdots = x_{r-1}^{m_{r-1}} + x_r^{m_r} = 0$ in $X(p_2;p_3,\ldots, p_r)$. By the first part of Lemma \ref{lemma:number-of-solutions}, this number is given by 
	\begin{equation}\label{eq:intersections-x0}
		\frac{m_1\cdot \gcd\big(dp_2\cdot(p_1,p_2,\ldots, p_r),(a_1p_2 - a_2p_1)\cdot(p_2,\ldots, p_r)\big)}{dp_2\cdot \gcd(p_2,\ldots, p_r)},\
	\end{equation}
which is equal to the expression in the proposition. Analogously, one can show that the number of intersections of each component with $\{x_1 = 0\}$ is given by
	\begin{equation}\label{eq:intersections-x1}
    	\frac{m_0\cdot \gcd\big(dp_2\cdot(p_0,p_2,\ldots, p_r),(a_0p_2 - a_2p_0)\cdot(p_2,\ldots, p_r)\big)} {dp_2\cdot \gcd(p_2,\ldots, p_r)}.
	\end{equation}
If $r = 2$, then $\E \subset \P^r_{(p_0,p_1,p_2)}(d;a_0,a_1,a_2)$ given by the single equation $x_0^{m_0} + x_1^{m_1} + x_2^{m_2} = 0$ is irreducible, showing items (i) and (ii). The number of intersections with $\{x_0 = 0\}$ and $\{x_1 = 0\}$ can be shown similarly as in the case where $r \geq 3$.  
\end{proof}

\begin{remark}\label{rmk:symm-formula}
The expressions in Proposition \ref{prop:number-of-comp} are computed by looking locally on the chart where $x_2 \neq 0$, but they could also be obtained by looking on one of the other charts $x_i \neq 0$ for $i = 3,\ldots, g$. This is the reason why we rewrote the formulas \eqref{eq:number-of-comp}, \eqref{eq:intersections-x0} and \eqref{eq:intersections-x1} of the proof into the formulas of the statement; this way, it is clear that they are independent of the choice of chart. In practice, however, we will often use the local expressions of the proof as they are slightly easier to work with.
\end{remark}

Another challenge will be to understand how the exceptional divisors intersect each other. When blowing up at the point $A_{k-1}$ in Step $k$, the components of $\E_{k-1}$ will be separated, and the intersections with the new exceptional divisor $\E_k$ will be \emph{equally distributed} as explained in the next proposition, in which $\mathcal D$ plays the role of the strict transform of $\E_{k-1}$. Furthermore, the new center of the blow-up will not be contained in any of the components of $\E_{k-1}$, which implies that every exceptional divisor only intersects the divisor of the previous and of the next blow-up, and that the combinatorics of these intersections stay unchanged throughout the rest of the resolution. This will be the key ingredient to show that the dual graph of the resolution is a tree as in Figure~\ref{fig:dual-graph}, see Theorem~\ref{thm:resolutionY} for the details. It is also worth mentioning that the first part of the next result is a generalization of the resolution of a cusp $x^p+y^q$ in $\C^2$ with $\gcd(p,q)$ not necessarily equal to $1$; such a cusp consists of $\gcd(p,q)$ irreducible components going through the origin and pairwise disjoint elsewhere, and after the $(q,p)$-weighted blow-up at the origin, all the components are separated, see for instance~\cite[Example 3.3]{Ma1}.

\begin{prop}\label{prop:intersection-with-previous-divisor}
We work in the same situation as Proposition \ref{prop:number-of-comp} with the stronger condition that $a_ip_j - a_jp_i = 0$ for all $i,j \in \{1,\ldots, r\}$. Consider $\P^r_{(p_0, \ldots, p_r)}(d;a_0, \ldots, a_r)$ as the exceptional divisor of the weighted blow-up $\pi: \hat{X}(d;a_0,\ldots, a_r)_{\omega} \rightarrow X(d;a_0,\ldots, a_r)$ of $X(d;a_0,\ldots, a_r)$ at the origin with weights $\omega = (p_0,\ldots, p_r)$, and let $\mathcal D$ be the strict transform under this blow-up of $D$ in $X(d;a_0,\ldots, a_r)$ defined by 
	\begin{equation}\label{eq:equations-D}
		\left \{ \begin{array}{ccccl}
	 	& & x_0^{m_0} &=& 0 \\
 		x_1^{m_1}  &+& x_2^{m_2} &=& 0  \\
	 	& \vdots & & & \\
		x_{r-1}^{m_{r-1}}   &+& x_r^{m_r} &=& 0.
		\end{array}\right.
	\end{equation}
Then, 
\begin{enumerate}[wide, labelindent=0pt]
	\item[(i)] the total number of irreducible components of $\mathcal D$ is \[\frac{m_1\cdots m_r}{\lcm(m_1,\ldots, m_r)},\] and they are all pairwise disjoint;
	\item[(ii)] each component of $\mathcal D$ is intersected by precisely one component of $\E$, and this intersection consists of a single point; and
	\item[(iii)] each component of $\E$ intersects the same number, \[\frac{m_1\lcm(m_2,\ldots, m_r)}{\lcm(m_1,\ldots, m_r)},\] of components of $\mathcal D$, which is precisely the number of components of $\mathcal D$ divided by the number of components of $\E$.
\end{enumerate}
If the above conditions (i) - (iii) are satisfied, we will say that the intersections of $\mathcal D$ and $\E$ are equally distributed.
\end{prop}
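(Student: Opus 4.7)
The plan is to work with explicit parametrizations of the components of $D$, push them through the weighted blow-up $\pi$, and read off the intersection data with $\E$ from the limiting behaviour on the exceptional divisor. Lifting to $\C^{r+1}$, the first equation $x_0^{m_0}=0$ forces the lift $\tilde D$ to lie in $\{x_0=0\}$, and on this hyperplane the remaining equations are weighted homogeneous of common weight $w = p_i m_i$ (independent of $i$). Consequently, every irreducible component of $\tilde D$ admits a parametrization
\[
t \longmapsto \bigl(0,\, c_1 t^{\ell/m_1},\, c_2 t^{\ell/m_2},\, \ldots,\, c_r t^{\ell/m_r}\bigr), \qquad \ell := \lcm(m_1,\ldots,m_r),
\]
where $(c_1,\ldots, c_r)\in (\C^*)^r$ solves $c_{i-1}^{m_{i-1}} + c_i^{m_i} = 0$ for $i=2,\ldots,r$. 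Fixing $c_1$, there are $m_2 m_3\cdots m_r$ such tuples, and the reparametrization $t\mapsto \lambda t$ defines a free $\C^*$-action on these whose finite quotient has cardinality $\tfrac{m_1 \cdots m_r}{\ell}$. The stronger hypothesis $a_i p_j = a_j p_i$ for $i,j\geq 1$, combined with $d\mid a_i m_i$, ensures that the induced $\mu_d$-action preserves each such component (any $\xi\in \mu_d$ can be absorbed by a common reparametrization $\lambda = \xi^{a_i m_i/\ell}$), so the count descends to $D \subset X(d;a_0,\ldots,a_r)$. Distinct tuples trace out distinct curves meeting only at the origin, so all components of $D$ are pairwise disjoint off $0$. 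Since $\pi$ is an isomorphism away from $0$, this yields (i) for $\mathcal D$, whose components are then globally pairwise disjoint.

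For (ii), I would lift each parametrized component of $D$ to a chart $U_i$ of $\hat X(d;a_0,\ldots,a_r)_\omega$ with $i\geq 1$. Using $x_j = y_i^{p_j}y_j$ for $j\neq i$ and $x_i = y_i^{p_i}$, the parametrization becomes $y_i = c_i^{1/p_i}\, t^{\ell/w}$ (up to a root of unity) while each $y_j$ tends as $t\to 0$ to a nonzero constant proportional to $c_j/c_i^{p_j/p_i}$. Hence the strict transform meets the exceptional divisor $\P^r_\omega(d;A)$ in a single point, namely $[0:c_1:c_2:\cdots:c_r]_\omega$. A direct check confirms that this point satisfies the defining equations of $\E$, completing (ii).

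For (iii), the component of $\E$ containing the intersection point is determined, in the chart $\{x_2\neq 0\}$, by the normalized tail $(c_3/c_2^{p_3/p_2},\ldots, c_r/c_2^{p_r/p_2})$, i.e., by the parameter that indexes the $\E$-components in Proposition~\ref{prop:number-of-comp}. Fixing such a tail, the $\mathcal D$-components landing on the corresponding $\E$-component correspond to pairs $(c_1,c_2)$ satisfying $c_1^{m_1}+c_2^{m_2}=0$ with $c_2^{m_2}$ prescribed by the tail, modulo the residual symmetries. A direct application of Lemma~\ref{lemma:number-of-solutions} produces $\tfrac{m_1\lcm(m_2,\ldots,m_r)}{\lcm(m_1,\ldots,m_r)}$, which matches the ratio of total component counts from (i) and Proposition~\ref{prop:number-of-comp}(i). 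The main obstacle I anticipate is keeping the various group actions in order across different charts (the $\mu_d$-action, the $\C^*$-reparametrization, and the local stabilizers arising on the weighted blow-up); the hypothesis $a_i p_j = a_j p_i$ is precisely what makes these compatible and forces the intersection count to be both chart-independent and equally distributed among the $\E$-components.
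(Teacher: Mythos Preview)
Your approach is correct and genuinely different from the paper's. The paper never parametrizes the components of $D$; instead it first bounds the number of components of the lift to $\C^{r+1}$ by an inductive factorization argument (factoring $x_1^{m_1}+x_2^{m_2}$ over the $\gcd(m_1,m_2)$-th roots of $-1$, then parametrizing the first equation and recursing), and then shows this bound is attained by working entirely in the third chart $U_2$ of the blow-up, where the components of $\mathcal D$ become the explicit curves $\mathcal D_{b'}=\{[(0,b'_1,x_2,b'_3,\ldots,b'_r)]\}$ indexed by solutions $b'$ in a cyclic quotient space and counted via Lemma~\ref{lemma:number-of-solutions}. Parts (ii) and (iii) are then read off in the same chart by matching $\mathcal D_{b'}$ against the components $\E_b$ of $\E\setminus\{A\}$. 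Your route---global monomial parametrizations pushed through $\pi$---is more geometric and makes the role of the hypothesis $a_ip_j=a_jp_i$ transparent (it is exactly what lets the $\mu_d$-action be absorbed by a reparametrization of $t$), whereas the paper's chart computation is more mechanical but entirely self-contained and avoids tracking roots of unity across charts.

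A few points in your write-up should be tightened. For the $\mu_d$-invariance in (i), your formula $\lambda=\xi^{a_im_i/\ell}$ needs the observation that $a_im_i$ is independent of $i\geq 1$ (from $a_ip_j=a_jp_i$ together with $p_im_i=p_jm_j$) and that $a_im_i/\ell=a_i\gcd(p_1,\ldots,p_r)/p_i$ is in fact an integer; neither is stated. In (ii) you should remark that the limit point $[0:c_1:\cdots:c_r]_\omega$ has $x_0=0$, hence is not the point $A$, so by Proposition~\ref{prop:number-of-comp}(ii) it lies on a \emph{unique} component of $\E$. In (iii), ``a direct application of Lemma~\ref{lemma:number-of-solutions}'' is too vague: you should name the cyclic quotient space (e.g.\ $X(p_2;p_1,p_3,\ldots,p_r)$ after normalizing $c_2=1$) and the system to which the lemma is applied, exactly as the paper does.
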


\begin{remark}\label{rmk:intersections-with-previous-divisor}
In item (iii), one can rewrite \[\frac{m_1\lcm(m_2,\ldots, m_r)}{\lcm(m_1,\ldots, m_r)} = \frac{m_1\gcd(p_1,\ldots, p_r)}{\gcd(p_2,\ldots, p_r)}.\] This is consistent with Proposition~\ref{prop:number-of-comp}, item (iii), with $a_1P - p_1Q = 0$ as $a_1p_i - a_ip_1 = 0$ for all $i \in \{1,\ldots, r\}$: the intersection of $\E$ with $\mathcal D$ corresponds to the intersection of $\E$ with $\{x_0 = 0\}$.
\end{remark} 

\begin{proof}
We start by considering for a moment the subspace of $\C^{r+1}$ defined by the equations~\eqref{eq:equations-D} and prove that the number of irreducible components of this subspace is \[\frac{m_1\cdots m_r}{\lcm(m_1,\ldots, m_r)}.\] This provides an upper bound on the number of irreducible components of $D$ and, hence, of $\mathcal D$. First of all, we can reduce to the subspace of $\C^r$ given by the last $r-1$ equations and we work by induction on $r \geq 2$. For $r = 2$, we have to consider $\{x_1^{m_1} + x_2^{m_2} = 0\}$ in $\C^2$. Let $q = \gcd(m_1,m_2)$ and denote by $\xi_i$ for $i = 1,\ldots,q$ the $q$th roots of $-1$. We can rewrite \[x_1^{m_1} + x_2^{m_2} = \prod_{i=1}^q\big(x_2^{\frac{m_2}{q}} - \xi_ix_1^{\frac{m_1}{q}}\big),\] where each factor $x_2^{\frac{m_2}{q}} - \xi_ix_1^{\frac{m_1}{q}}$ is an irreducible polynomial in $\C[x_1,x_2]$. In other words, the irreducible components are given by $\{x_2^{\frac{m_2}{q}} - \xi_ix_1^{\frac{m_1}{q}} = 0\}$, and there are $q = \gcd(m_1,m_2) = \frac{m_1m_2}{\lcm(m_1,m_2)}$ components in total. In the induction step, assuming that the statement holds for $r-1$, one can again decompose the first equation as above and reduce the problem to showing that each of the subspaces given by one factor of the first equation together with the last $r-2$ equations from~\eqref{eq:equations-D} has \[\frac{m_1\cdots m_r}{q\lcm(m_1,\ldots, m_r)}\] irreducible components. In each of these problems, the first equation can be parametrized with a parameter $t \in \C$ to further reduce the problem to investigating the components of 
	\[\left \{ \begin{array}{ccccl}
		t^{\frac{m_1m_2}{q}}&+&x_3^{m_3}&=&0\\ 
 		x_3^{m_3}  &+& x_4^{m_4} &=& 0  \\
		& \vdots & & & \\
		x_{r-1}^{m_{r-1}}   &+& x_r^{m_r} &=& 0
	\end{array}\right.\]
in $\C^{r-1}$. By the induction hypothesis, we can conclude. To show that the upper bound is attained for $\mathcal D$, we take a look at the third chart of $\hat{X}(d;a_0,\ldots, a_r)_{\omega} $ where the exceptional divisor is given by $\{x_2 = 0\}$; one could also obtain this by looking at one of the other charts, except for the first one, where the strict transform of $\mathcal D$ is not visible. The third chart is given by 
	\[X\left( \begin{array}{c|ccccccc} 
		p_2 & p_0 & p_1 & -1 & p_3 & \cdots & p_r \\
    	dp_2 & A_0 & 0 & a_2 & 0 & \cdots & 0    
	\end{array}\right),\] 
with $A_0 = a_0p_2 - a_2p_0$, via \[[(x_0,\ldots, x_r)] \longmapsto [((x_0x_2^{p_0},x_1x_2^{p_1},x_2^{p_2},x_2^{p_3}x_3,\ldots, x_2^{p_r}x_r),[x_0:x_1:1:x_3:\ldots:x_r]_{\omega})],\] see~\eqref{eq:chart-blow-up-quotient-space}. By pulling back the equations of $D$ along this map, we find the following equations of $\mathcal D$ in this chart: 
	\[\left \{ \begin{array}{ccccl}
		& & x_0^{m_0} &=& 0 \\
 		x_1^{m_1}  &+& 1 &=& 0  \\
		& \vdots & & & \\
		x_{r-1}^{m_{r-1}}   &+& x_r^{m_r} &=& 0.
	\end{array}\right.\]
From these equations, it is not hard to see that the irreducible components of $\mathcal D$ in this chart are all pairwise disjoint and given by $\mathcal D_{b'} = \{[(0,b'_1,x_2,b'_3,\ldots,b'_r)] \mid x_2 \in \C\}$ for $b' = [(b'_1,b'_3,\ldots,b'_r)] \in X(p_2;p_1,p_3,\ldots, p_r)$ a fixed solution of $x_1^{m_1}  + 1 =  \cdots = x_{r-1}^{m_{r-1}}  + x_r^{m_r} = 0$. By the second part of Lemma~\ref{lemma:number-of-solutions}, their total number is \[\frac{m_1m_3\cdots m_r\gcd(p_1,\ldots, p_r)}{p_2} = \frac{m_1\cdots m_r}{\lcm(m_1,\ldots, m_r)}.\] It follows that the total number of irreducible components of $\mathcal D$ is given by the same number and that all irreducible components of $\mathcal D$ are visible in this chart. Furthermore, by symmetry between the charts, we can conclude that all components are pairwise disjoint. This shows (i). To prove the other two statements, we first suppose that $r \geq 3$ and we keep on working in the third chart; the irreducible components of $\E$ are obtained from those of $\E \setminus \{A\}$ by adding the point $A$. As we saw in the proof of Proposition~\ref{prop:number-of-comp}, all irreducible components of $\E \setminus \{A\}$ are given by $\E_b = \{[(x_0,x_1,0,b_3,\ldots, b_r))] \mid x_0^{m_0} + x_1^{m_1} + 1 = 0\}$ for $b = [(b_3,\ldots, b_r)]$ a fixed solution in $X(p_2;p_3,\ldots, p_r)$ of $1+x_3^{m_3} = \cdots = x_{r-1}^{m_{r-1}} + x_r^{m_r} = 0$, they are pairwise disjoint, and their total number is \[\frac{m_2\cdots m_r}{\lcm(m_2,\ldots, m_r)}.\] Assume now that a component $\mathcal D_{b'}$ of $\mathcal D$ in this chart intersects a component $\E_b$ of $\E \setminus \{A\}$. Then, there exist $b_0,b_1,b_2' \in \C$ with $b_0^{m_0} + b_1^{m_1} + 1 = 0$ such that $[(0,b_1',\ldots, b_r')] = [(b_0,b_1,0,b_3,\ldots, b_r)]$ is a point in the intersection. This implies that ${b_0 = b_2' = 0}$ and that $[(b_1',b'_3,\ldots, b'_r)] = [(b_1,b_3,\ldots, b_r)]$ in $X(p_2;p_1,p_3,\ldots, p_r)$. Hence, the component $\mathcal D_{b'}$ only intersects the component of $\E \setminus \{A\}$ corresponding to $[(b'_3,\ldots, b'_r)]$, and the intersection consists of the single point $[(0,b'_1,0,b'_3,\ldots, b'_r)]$. It remains to show that each component of $\E$ has non-empty intersection with precisely \[\frac{m_1\lcm(m_2,\ldots, m_r)}{\lcm(m_1,\ldots, m_r)}\] components of $\mathcal D$. Along the same lines, we see that a component $\E_b$ of $\E \setminus \{A\}$ intersects every component $\mathcal D_{b'}$ of $\mathcal D$ in the third chart with $[(b_3',\ldots, b_r')] = [(b_3,\ldots, b_r)]$ in $X(p_2;p_3,\ldots, p_r)$. Hence, we need to count the solutions in $X(p_2;p_1,p_3,\ldots, p_r)$ of 
	\[\left \{\begin{array}{ccccl}
		x_1^{m_1}  &+& 1 &=& 0  \\
		& \vdots & & & \\
		x_{r-1}^{m_{r-1}}   &+& x_r^{m_r} &=& 0
	\end{array}\right.\]
with $[(b_3',\ldots, b_r')]$ fixed. The first part of Lemma~\ref{lemma:number-of-solutions} gives the right number, see also Remark~\ref{rmk:intersections-with-previous-divisor}. If $r = 2$, then $\E$ is irreducible and intersects every component of $\mathcal D$ in a single point; this can again be shown by considering the third chart.
\end{proof}

One last result that we discuss before going into the construction of the resolution is needed to control the power of some variables when pulling back the equations~\eqref{eq:equations-Y} of the curve $Y$. Recall that the numbers $b_{ij}$ and $n_i$ were introduced in~\eqref{eq:ni-betai}, see Section~\ref{SpaceMonomial}.

\begin{notation}
	Let $n := n_0 \cdots n_g$ and define the numbers $b_{i}^{(k)}$ for $i,k \in \{ 0,\ldots,g\}$ with $i>k$ recursively as follows:
	\begin{equation}\label{eq:def-b_i^{(k)}}
		\begin{cases}
			\displaystyle b_i^{(0)} := b_{i0} \frac{n}{n_0} & \quad \text{for } i>0, \\
			\displaystyle b_i^{(k)} := b_i^{(k-1)} + \Big( \frac{b_{ik}}{n_k} + \cdots + \frac{b_{i(i-1)}}{n_{i-1}} - 1 \Big) b_k^{(k-1)} &  \quad \text{for }i > k \geq 1.
		\end{cases}
	\end{equation}
\end{notation}

Note that $b_1^{(0)} = n$. For each $k \in \{1,\ldots, g\}$, the number $b_{i}^{(k)}$ for $i>k$ will be related to the $i$th variable $x_i$ in Step $k$ of the resolution. The following result expresses these numbers in terms of the generators $(\bar{\beta}_0,\ldots, \lbeta_g)$ of the semigroup introduced in Section~\ref{SpaceMonomial}. As a consequence, we show that they are all greater than $1$.

\begin{lemma}\label{lemma:pos-powers}
Let $i,k \in \{1, \ldots, g\}$ with $i > k$. Then, \[b_{i}^{(k)} = (n_i\lbeta_i - n_k\lbeta_k) - \frac{b_{i(i-1)}}{n_{i-1}}(n_{i-1}\lbeta_{i-1} - n_k\lbeta_k) - \cdots -  \frac{b_{i(k+1)}}{n_{k+1}}(n_{k+1}\lbeta_{k+1} - n_k\lbeta_k),\] and, in particular, $b_{k+1}^{(k)} = n_{k+1} \bar{\beta}_{k+1} - n_k \bar{\beta}_k$. Furthermore, $b_i^{(k)} > 1$ or, equivalently,
	\begin{equation}\label{eq:inequality-b}
		b_k^{(k-1)} + 1 < b_i^{(k-1)} + b_{ik} \frac{b_k^{(k-1)}}{n_k} + \cdots + b_{i(i-1)} \frac{b_k^{(k-1)}}{n_{i-1}}.
	\end{equation}
\end{lemma}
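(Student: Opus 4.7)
The proof splits into two independent parts: the closed-form identity for $b_i^{(k)}$ and the integer inequality $b_i^{(k)}>1$. Note that the equivalence of $b_i^{(k)}>1$ with~\eqref{eq:inequality-b} is merely a rearrangement of the defining recursion for $b_i^{(k)}$ in terms of $b_i^{(k-1)}$ and $b_k^{(k-1)}$, so the substantive work is concentrated in those two parts.

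For the closed form I would use induction on $k\geq 1$. A preparatory observation is that $n/n_0 = n_1\cdots n_g = \bar\beta_0$ (combining property~(i) with $e_0=\bar\beta_0$), so $b_i^{(0)}=b_{i0}\bar\beta_0$ and, using~\eqref{eq:ni-betai} at $i=1$, $b_1^{(0)}=n_0\bar\beta_0=n_1\bar\beta_1$. The base case $k=1$ then follows by plugging these into the recursion, separating the $j=1$ term in the resulting sum, and using~\eqref{eq:ni-betai} at $i$ to combine $b_{i0}\bar\beta_0+b_{i1}\bar\beta_1=n_i\bar\beta_i-\sum_{j=2}^{i-1}b_{ij}\bar\beta_j$. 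For the inductive step, assume the formula at level $k-1$; its empty-sum specialization at $i=k$ gives $b_k^{(k-1)}=n_k\bar\beta_k-n_{k-1}\bar\beta_{k-1}$. Plug these into the recursion and observe that all terms involving $n_{k-1}\bar\beta_{k-1}$ cancel while a rearrangement of the sums (splitting off $j=k$ and using~\eqref{eq:ni-betai}) produces exactly the claimed formula at level $k$. The identity $b_{k+1}^{(k)}=n_{k+1}\bar\beta_{k+1}-n_k\bar\beta_k$ is then immediate because the sum is empty when $i=k+1$.

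For the positivity I would argue directly from the closed form
\[
b_i^{(k)} \;=\; (n_i\bar\beta_i - n_k\bar\beta_k) - \sum_{j=k+1}^{i-1}\frac{b_{ij}}{n_j}(n_j\bar\beta_j - n_k\bar\beta_k).
\]
For $i=k+1$ the sum is empty and property~(iv) at index $k$ together with $n_{k+1}\geq 2$ yields $b_{k+1}^{(k)}\geq 2\bar\beta_{k+1}-n_k\bar\beta_k>\bar\beta_{k+1}$. For $i\geq k+2$, I would use the bound $b_{ij}/n_j\leq 1-1/n_j$ (valid since $b_{ij}<n_j$ for $j\geq 1$) and $1/n_j\leq 1/2$ to estimate the subtracted sum, then telescope via property~(iv) in the form $(n_j-1)\bar\beta_j<\bar\beta_{j+1}-\bar\beta_j$. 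After simplification this produces
\[
b_i^{(k)} \;>\; (n_i-1)\bar\beta_i + \bar\beta_{k+1} + n_k\bar\beta_k\cdot\frac{i-k-3}{2},
\]
which is $>1$ for $i\geq k+3$ because the last term is nonnegative and $(n_i-1)\bar\beta_i+\bar\beta_{k+1}\geq \bar\beta_i+\bar\beta_{k+1}>1$; for $i=k+2$ the negative contribution $-n_k\bar\beta_k/2$ is absorbed by $\bar\beta_{k+1}/2$ using once more $\bar\beta_{k+1}>n_k\bar\beta_k$.

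The only real obstacle is the positivity: the closed form is a routine induction once the cancellation of the $n_{k-1}\bar\beta_{k-1}$ terms is recognized, whereas the lower bound on $b_i^{(k)}$ requires a combined use of $b_{ij}<n_j$, property~(iv) applied telescopically, and a small case split between $i=k+1$, $i=k+2$, and $i\geq k+3$.
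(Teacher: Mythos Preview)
Your proposal is correct. The derivation of the closed form is essentially the same induction on $k$ as in the paper (same base identities $b_i^{(0)}=b_{i0}\bar\beta_0$ and $b_1^{(0)}=n_1\bar\beta_1$, same use of~\eqref{eq:ni-betai}, same cancellation of the $n_{k-1}\bar\beta_{k-1}$ terms in the inductive step).

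For the inequality $b_i^{(k)}>1$ you take a genuinely different route. The paper uses the cruder bound $b_{ij}/n_j<1$, reducing to
\[
(n_i\bar\beta_i-n_k\bar\beta_k)-\sum_{j=k+1}^{i-1}(n_j\bar\beta_j-n_k\bar\beta_k)>1,
\]
and then proves this by a short induction on $i$ via the single estimate $n_i\bar\beta_i-n_k\bar\beta_k>n_i(n_{i-1}\bar\beta_{i-1}-n_k\bar\beta_k)$ (from $\bar\beta_i>n_{i-1}\bar\beta_{i-1}$) together with $n_i\geq 2$. Your argument instead exploits the sharper bound $b_{ij}/n_j\leq 1-1/n_j$, telescopes $\sum_j(n_j-1)\bar\beta_j<\bar\beta_i-\bar\beta_{k+1}$, and uses $1-1/n_j\geq 1/2$ to obtain the explicit lower bound $(n_i-1)\bar\beta_i+\bar\beta_{k+1}+n_k\bar\beta_k\,(i-k-3)/2$, at the cost of a small case split $i=k+1$, $i=k+2$, $i\geq k+3$. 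Both work; the paper's induction is slightly slicker (no cases beyond the base), while your approach gives a quantitative lower bound rather than just $>1$.
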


\begin{proof}
We use induction on $k$. Let us first consider $k=1$. Note that $\bar{\beta}_0 = \frac{n}{n_0}$ and $\bar{\beta}_1 = \frac{n}{n_1}$. Using equation~\eqref{eq:ni-betai}, the term $b_i^{(1)}$ for $i > 1$ can indeed be rewritten as
	\begin{align*}
 		b_{i}^{(1)} & = b_{i0} \lbeta_0 +  b_{i1} \lbeta_1 + \Big(\frac{b_{i2}}{n_2} + \cdots + \frac{b_{i(i-1)}}{n_{i-1}} - 1\Big)n \\
 		& = n_i\lbeta_i - b_{i2}\lbeta_2 - \cdots - b_{i(i-1)}\lbeta_{i-1} + \Big(\frac{b_{i2}}{n_2} + \cdots + \frac{b_{i(i-1)}}{n_{i-1}} - 1\Big)n_1\lbeta_1 \\
 		& = (n_i\lbeta_i - n_1\lbeta_1) - \frac{b_{i(i-1)}}{n_{i-1}}(n_{i-1}\lbeta_{i-1} - n_1\lbeta_1) - \cdots -  \frac{b_{i2}}{n_2}(n_2\lbeta_2 - n_1\lbeta_1). 
	\end{align*}
Let us now consider the general case. By induction, we know that $b_k^{(k-1)} = n_k \bar{\beta}_k - n_{k-1} \bar{\beta}_{k-1} $ and that $b_i^{(k-1)}$ for $i > k - 1$ can be written as \[b_i^{(k-1)} = n_i \bar{\beta}_i - b_{ik} \bar{\beta}_k - \cdots - b_{i(i-1)} \bar{\beta}_{i-1} + \Big( \frac{b_{ik}}{n_k} + \cdots + \frac{b_{i(i-1)}}{n_{i-1}} - 1 \Big) n_{k-1} \bar{\beta}_{k-1}.\] Hence, by definition, we have for $i > k$ that 
	\[\begin{aligned}
		b_i^{(k)} &= b_i^{(k-1)} + \Big( \frac{b_{ik}}{n_k} + \cdots + \frac{b_{i(i-1)}}{n_{i-1}} - 1 \Big) b_k^{(k-1)} \\
		&= n_i \bar{\beta}_i - b_{ik} \bar{\beta}_k - \cdots - b_{i(i-1)} \bar{\beta}_{i-1} + \Big( \frac{b_{ik}}{n_k} + \cdots + \frac{b_{i(i-1)}}{n_{i-1}} - 1 \Big) n_k \bar{\beta}_k. 
	\end{aligned}\]
After regrouping, we obtain the required formula. For the second part of the lemma, as $b_{ij} < n_j$ whenever $i>j\neq 0$, see the extra assumption on~\eqref{eq:ni-betai}, it is enough to show that \[(n_i\lbeta_i- n_k \lbeta_k) - (n_{i-1}\lbeta_{i-1}-n_k\lbeta_k) - \cdots - (n_{k+1}\lbeta_{k+1}-n_k\lbeta_k) > 1.\] We proceed by induction on $i > k$. For $i = k+1$, one indeed has $n_{k+1} \bar{\beta}_{k+1} - n_k \bar{\beta}_k > 1,$ since $\lbeta_{k+1} > n_{k}\lbeta_{k}$ and $n_{k+1} \geq 2$. Suppose now that it is true for $i-1$ with $i>k+1$. The conditions $\bar{\beta}_i > n_{i-1} \bar{\beta}_{i-1}$ and $n_i \geq 2$ imply that $n_i\lbeta_i - n_k\lbeta_k > n_i(n_{i-1}\lbeta_{i-1} - n_k\lbeta_k).$ Hence,
	\begin{align*}
		&(n_i\lbeta_i- n_k\lbeta_k) - (n_{i-1}\lbeta_{i-1}-n_k\lbeta_k) - \cdots - (n_{k+1}\lbeta_{k+1}-n_k\lbeta_k) \\
		&>(n_i-1)(n_{i-1}\lbeta_{i-1} - n_k\lbeta_k) - (n_{i-2}\lbeta_{i-2} - n_k\lbeta_k) - \cdots - (n_{k+1}\lbeta_{k+1}-n_k\lbeta_k) \\
		&\geq (n_{i-1}\lbeta_{i-1} - n_k\lbeta_k) - (n_{i-2}\lbeta_{i-2} - n_k\lbeta_k) - \cdots - (n_{k+1}\lbeta_{k+1}-n_k\lbeta_k)\\
		&> 1,
	\end{align*}
where the second inequality again follows from $n_i \geq 2$, and the last one from the induction hypothesis.
\end{proof}

\subsection{Construction of the embedded $\Q$-resolution of $Y \subset S$} We are now ready to start with Step~1 in the resolution of $Y \subset S$, focusing on the information needed to compute the zeta function of monodromy. The idea is to consider the blow-up $\pi_1$ at the origin of $\C^{g+1}$ with some weights and study its restriction to $S$ that we call $\varphi_1 := \pi_1|_{\hat{S}}: \hat{S} \to S$, with $\hat{S}$ the strict transform of $S$. After this blow-up, we will be able to eliminate one variable so that we attain the same situation as in the beginning, but in one dimension less and where the ambient space contains quotient singularities. In Step~2, we will again consider a weighted blow-up of the ambient space and its restriction $\varphi_2$ to $\hat{S}$. As mentioned in the beginning of this section, we will need $g$ such steps. Denote by $\E_k$ for $k=1,\ldots,g$ the exceptional divisor of $\varphi_k$ appearing at Step $k$; we will also denote their strict transforms throughout the process by $\E_k$. To keep track of the necessary combinatorics of these divisors, we introduce $H_i$ for $i = 0,\ldots, g$ as the divisor in $S$ defined by $\{ x_i = 0 \} \cap S \subset \C^{g+1}$. We will see in the process of resolving the singularity that (the strict transform of) $H_k$ is separated from the strict transform $\hat{Y}$ of $Y$ precisely at Step $k$ and that it intersects the $k$th exceptional divisor $\E_k$ transversely. Therefore, it is interesting to study how the $H_i$'s behave in the process of resolving $Y \subset S$, although they are not part of our curve. We again keep on denoting them by $H_i$.

\subsubsection{Step 1: weighted blow-up \texorpdfstring{$\pi_1$}{pi1} at \texorpdfstring{$0\in\C^{g+1}$}{0inCg+1}
with weights \texorpdfstring{$\omega_1$}{\omega1}}

Let $\pi_1: \hat{\C}^{g+1}_{\omega_1} \to \C^{g+1}$ be the weighted blow-up at the origin with respect to
$\omega_1 := \big(\frac{n}{n_0}, \ldots, \frac{n}{n_g}\big)$, where $n = n_0n_1\cdots n_g$. For a better exposition, we split the section into several parts.

\vspace{14pt}

\underline{Global situation}. Let us first discuss the global picture. Recall that the equations of $Y$ and $S$ are given by~\eqref{eq:equations-Y} and~\eqref{eq:equations-S}, respectively, and that the exceptional divisor $E_1$ of $\pi_1$ is identified with $\P^g_{\omega_1}$. The exceptional divisor $\E_1 := E_1 \cap \hat{S}$ of $\varphi_1 = \pi_1|_{\hat{S}}:\hat{S} \rightarrow S$ is in the coordinates of $\P^g_{\omega_1}$ given by the $\omega_1$-homogeneous part of $S$. By the inequality~\eqref{eq:inequality-b} in Lemma~\ref{lemma:pos-powers} for $k=1$ and $i = 2, \ldots, g$, we have \[n < n + 1 < b_{i0}\frac{n}{n_0} + b_{i1} \frac{n}{n_1} + \cdots + b_{i(i-1)}\frac{n}{n_{i-1}},\] so that $\E_1 \subset \P^g_{\omega_1}$ is defined by
	\begin{equation}\label{eq:E1-homog}
		\left \{ \begin{array}{clccccl}
		x_1^{n_1} & - & x_0^{n_0}  &+& \lambda_2x_2^{n_2} &=& 0 \\
		&  & x_2^{n_2}  &+& \lambda_3x_3^{n_3} &=& 0  \\
		& & & \vdots & & & \\
		& & x_{g-1}^{n_{g-1}}   &+& \lambda_g x_g^{n_g} &=& 0.
	\end{array}\right.
\end{equation}

After a change of variables, we can assume that all coefficients in these equations are equal to $1$ so that they  satisfy the conditions of Proposition~\ref{prop:number-of-comp} with $d = 1$ and $a_i = 0$ for $i = 0,\ldots, g$; for instance, the intersection $\E_1 \cap \{x_i = 0\} = \E_1 \cap H_i$ for $i = 2,\ldots, g$ is the point $P_1 := [1:1:0:\ldots:0]$. According to this proposition, the number of irreducible components of $\E_1$ is
	\begin{equation}\label{eq:irred-e1}
		\frac{n_2\cdots n_g}{\lcm(n_2, \ldots, n_g)} = \frac{e_1}{\lcm(n_2, \ldots, n_g)}.
	\end{equation}
If $g=2$, then this number is equal to $1$ or, thus, $\E_1$ is irreducible. All the irreducible components of $\E_1$ have the point $P_1$ in common and are pairwise disjoint outside $P_1$. Combining~\eqref{eq:number-of-comp} and~\eqref{eq:intersections-x0} from Proposition~\ref{prop:number-of-comp}, the intersection $\E_1 \cap H_0$, which is $\E_1 \cap \{ x_0=0 \}$ in these coordinates, contains
	\begin{equation}\label{eq:points-E1-H0}
		\frac{n_1n_3\cdots n_g\gcd(\frac{n}{n_1},\frac{n}{n_2},\ldots, \frac{n}{n_g})}{\frac{n}{n_2}}
		= \gcd\Big(\frac{\lbeta_0}{n_1},\frac{\lbeta_0}{n_2},\ldots, \frac{\lbeta_0}{n_g}\Big)
		= \frac{\lbeta_0}{\lcm(n_1,n_2, \ldots, n_g)}
	\end{equation}
points, where $n = n_0\lbeta_0$ and the relation~\eqref{eq:rel-gcd-lcm} was used in the first and second equality, respectively. Analogously, from~\eqref{eq:number-of-comp} and~\eqref{eq:intersections-x1}, the intersection $\E_1 \cap H_1$ is formed by
	\begin{equation}\label{eq:points-E1-H1}
		\frac{n_0n_3\cdots n_g\gcd(\frac{n}{n_0},\frac{n}{n_2},\ldots, \frac{n}{n_g})}{\frac{n}{n_2}}
		= \gcd\Big(\frac{\lbeta_1}{n_0},\frac{\lbeta_1}{n_2},\ldots, \frac{\lbeta_1}{n_g}\Big)
		= \frac{\lbeta_1}{\lcm(n_0,n_2, \ldots, n_g)}
	\end{equation}
points. The fact that each irreducible component of $\E_1$ has the same number of intersections with $H_0$ (resp.~$H_1$) is compatible with the fact that the integer in~\eqref{eq:irred-e1} divides the one in~\eqref{eq:points-E1-H0} (resp.~\eqref{eq:points-E1-H1}). The intersection $\E_1 \cap \hat{Y}$ of $\E_1$ with the strict transform of $Y$ is defined by the $\omega_1$-homogeneous part of $Y$: $x_1^{n_1} - x_0^{n_0} = x_2 = \cdots = x_g = 0$. This is simply the point $P_1$. The global situation in the strict transform $\hat{S}$ for $g \geq 3$ is illustrated in Figure~\ref{fig:first-step}. For simplicity, the components of $\E_1$ are represented by lines, but they are in general neither smooth nor rational curves. If $g = 2$, we can make the same picture with $\E_1$ irreducible.

\begin{figure}[ht]
	\includegraphics{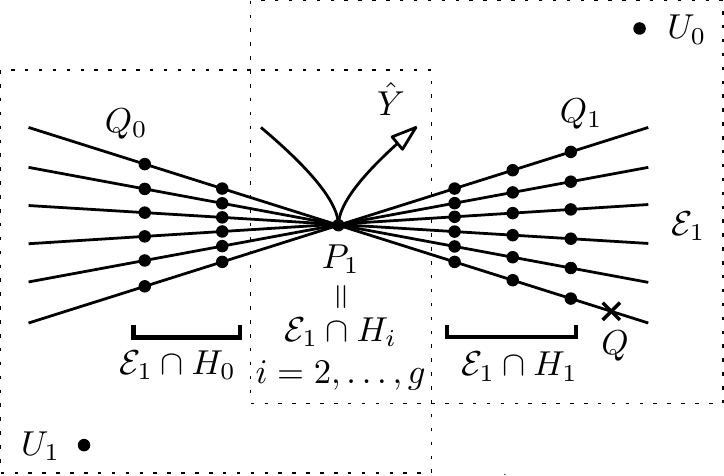}
	\caption{Step 1 in the resolution of $Y \subset S$ for $g \geq 3$.}
	\label{fig:first-step}
\end{figure}

\vspace{14pt}

In order to study the singular locus of $\hat{S}$, we use local coordinates. Note that the surface $\hat{S}$ is smooth outside $\E_1$: the complement $\hat{S} \setminus \E_1$ is isomorphic to $S \setminus \{0\}$, which is smooth as $(S,0)$ is an isolated singularity. To study the situation on $\E_1$, we just need to have a look at the first two charts $U_0$ and $U_1$ of $\hat{\C}^{g+1}_{\omega_1}$ because $\E_1 \cap H_0 \cap H_1 = \emptyset$. In fact, the local study of $\hat{S}$ around points of $\E_1$ can be understood using the first chart, except for the finite number of points in the intersection $\E_1 \cap H_0$. For the latter points, the second chart is employed.

\vspace{14pt}

\underline{Points in $\E_1 \setminus \bigcup_{i=0}^g H_i$}. Let us compute the equations of $\hat{S}$ and $\hat{Y}$ in the first chart $U_0$ of $\hat{\C}^{g+1}_{\omega_1}$. They are obtained via \[ (x_0, \ldots, x_g) \ \longmapsto \ (x_0^{\frac{n}{n_0}}, x_0^{\frac{n}{n_1}} x_1, \ldots, x_0^{\frac{n}{n_g}} x_g),\] and the new ambient space is $U_0 = X(\frac{n}{n_0};-1,\frac{n}{n_1}, \ldots, \frac{n}{n_g})$, see~\eqref{eq:chart-blow-up}. The total transform $\varphi_1^{-1}(Y)$ is defined by $x_0^n \hat{f}_1 = \dots = x_0^n \hat{f}_g = 0$, where
	\[\left\{ \begin{array}{l c l l}
		\hat{f}_1 & := &  x_1^{n_1} - 1 \\
		\hat{f}_2 & := &  x_2^{n_2} - x_0^{b_{2}^{(1)}} x_1^{b_{21}}\\
		& \vdots & & \\
		\hat{f}_g & := & x_g^{n_g} - x_0^{b_{g}^{(1)}} x_1^{b_{g1}} \cdots x_{g-1}^{b_{g(g-1)}}
	\end{array}\right.\]
define the strict transform $\hat{Y}$, and $x_0^n: \hat{S} \to \C$ is the exceptional part. Here, $b_i^{(1)} = b_{i0}\frac{n}{n_0} + (\frac{b_{i1}}{n_1} + \cdots +\frac{b_{i(i-1)}}{n_{i-1}} - 1)n  > 1$ for $i=2,\ldots,g$,  see Lemma~\ref{lemma:pos-powers}. The strict transform $\hat{S}$ is given by $\hat{f}_1 + \lambda_2 \hat{f}_2 = \cdots = \hat{f}_{g-1} +\lambda_g \hat{f}_g = 0$, and $H_i$ for $i = 1,\ldots, g$ by $\{x_i = 0\} \cap \hat{S}$. Note that $H_0$ is not visible in this chart. On $\E_1 \setminus \bigcup_{i=1}^g H_i$, the ambient space $U_0$ is smooth, and one can use the standard Jacobian criterion to show that $\hat{S}$ is also smooth on this set: the Jacobian matrix of $\hat{S}$ is a $(g-1)\times(g+1)$-matrix containing a lower triangular $(g-1)\times(g-1)$-matrix with diagonal $(\lambda_2 n_2 x_2^{n_2-1},\ldots,\lambda_g n_g x_g^{n_g-1})$. To compute the multiplicity of the exceptional divisor, we take a look at the equations around a generic point $Q = [(0,a_1,\ldots,a_g)] \in \E_1 \setminus \bigcup_{i=1}^g H_i$, where $a_i \in \C^{*}$. The order of the stabilizer subgroup of $Q$ is $\gcd(\frac{n}{n_0},\ldots, \frac{n}{n_g})$, and, hence, as germs, \[\bigg(X\Big(\frac{n}{n_0};-1,\frac{n}{n_1},\ldots, \frac{n}{n_g}\Big),Q\bigg) \simeq \bigg(X\Big(\gcd\Big(\frac{n}{n_0},\ldots, \frac{n}{n_g}\Big);-1,0,\ldots, 0\Big),Q\bigg) \simeq (\C^{g+1},0),\] see Section~\ref{MonodromyWeighted}. The function $x_0^n: U_0 \to \C$ is transformed under the previous isomorphism into $x_0^{N_1}: \C^{g+1} \to \C$, where \[N_1 :=  \frac{n}{\gcd(\frac{n}{n_0},\ldots, \frac{n}{n_g})} = \lcm(n_0, \ldots, n_g)\] is the multiplicity of $\E_1$ defined in~\eqref{eq:def-mult}. Here, we used once again the relation~\eqref{eq:rel-gcd-lcm}.

\vspace{14pt}

\underline{Points in the intersection $\E_1 \cap H_1$}. Let $Q_1 = [(0,0,a_2,\ldots,a_g)]$ be a point in $\E_1 \cap H_1$ considered on the first chart, where $a_i \in \C^{\ast}$ are chosen such that $-1 + \lambda_2 a_2^{n_2} = a_2^{n_2} + \lambda_3 a_3^{n_3} = \cdots = a_{g-1}^{n_{g-1}} + \lambda_g a_g^{n_g} = 0$. The order of the stabilizer subgroup of $Q_1$ is $\gcd(\frac{n}{n_0},\frac{n}{n_2},\ldots,\frac{n}{n_g})$, and $U_0$ around $Q_1$ becomes $X(\gcd(\frac{n}{n_0},\frac{n}{n_2},\ldots,\frac{n}{n_g}); -1,\frac{n}{n_1},0,\ldots,0)$. To have a chart centered at the origin, we can change the coordinates $x_i \mapsto x_i + a_i$ for $i=2,\ldots,g$. In these new coordinates, $\hat{S}$ is described in $X(\gcd(\frac{n}{n_0},\frac{n}{n_2},\ldots,\frac{n}{n_g}); -1,\frac{n}{n_1},0,\ldots,0)$ by equations of the form
	\[\left\{\begin{array}{l c l c}
		y_2 &:= & u_2(x_2) x_2 - h_2(x_0,x_1) &= 0 \\
		y_3 &:= & u_3(x_3) x_3 - h_3(x_0,x_1,x_2) &= 0 \\
		& \vdots & & \\
		y_g &:= & u_g(x_g) x_g - h_g(x_0,\ldots,x_{g-1}) &= 0,
	\end{array}\right.\]
where $u_i(x_i) \in \C\{ x_i \}$ are units, and $h_i \in \C[x_0,x_1,\ldots, x_{i-1}]$. By making the change of coordinates $y_0=x_0$, $y_1=x_1$, $y_i = u_i x_i - h_i$ for $i=2,\ldots,g$, we finally obtain the following situation at $[(x_0,x_1)]$ :
	\begin{equation}\label{eq:points-Q1}
		\left\{\begin{aligned}
			& \hat{S} = X \bigg( \gcd\Big(\frac{n}{n_0},\frac{n}{n_2},\ldots,\frac{n}{n_g}\Big);-1,\frac{n}{n_1} \bigg) \\
			& \E_1: \ x_0^n = 0, \qquad H_1: \ x_1 = 0.
		\end{aligned}\right.
	\end{equation}
In particular, the total transform $\varphi_1^{-1} (Y)$ has $\Q$-normal crossings on $\hat{S}$ at these points.

\vspace{14pt}

\underline{Points in the intersection $\E_1 \cap H_0$}. As mentioned before, to study these points, we need to consider the second chart $U_1 = X(\frac{n}{n_1};\frac{n}{n_0}, -1, \frac{n}{n_2}, \ldots, \frac{n}{n_g})$ via \[(x_0, \ldots, x_g) \longmapsto (x_0x_1^{\frac{n}{n_0}}, x_1^{\frac{n}{n_1}}, x_1^{\frac{n}{n_2}}x_2, \ldots, x_1^{\frac{n}{n_g}} x_g).\] Choose a point $Q_0 \in \E_1 \cap H_0$, which is of the form $[(0,0,a_2,\ldots,a_g)]$ for $a_i \in \C^{\ast}$ satisfying a set of equations similar as $Q_1 \in \E_1 \cap H_1$. Since its stabilizer subgroup has order $\gcd(\frac{n}{n_1},\ldots,\frac{n}{n_g})$, one obtains by repeating the same arguments as in~\eqref{eq:points-Q1} the following local situation around $Q_0$ at $[(x_0,x_1)]$:
	\begin{equation}\label{eq:points-Q0}
		\left\{\begin{aligned}
			& \hat{S} = X \bigg( \gcd\Big(\frac{n}{n_1},\frac{n}{n_2},\ldots,\frac{n}{n_g}\Big);\frac{n}{n_0},-1 \bigg) \\
			& \E_1: \ x_1^n = 0, \qquad H_0: \ x_0 = 0.
		\end{aligned}\right.
	\end{equation}
The total transform of $Y$ is again a $\Q$-normal crossings divisor around such points.

\vspace{14pt}

\underline{The point $P_1 = \E_1 \cap H_i$ for $i = 2,\ldots,g$}. In the first chart, $P_1 = [(0,1,0, \ldots, 0)]$, and the order of its stabilizer subgroup is $\gcd(\frac{n}{n_0},\frac{n}{n_1}) = e_1$. Hence, as germs, \[\bigg(X\Big(\frac{n}{n_0};-1,\frac{n}{n_1},\ldots, \frac{n}{n_g}\Big),P_1\bigg) \simeq \bigg(X\Big(e_1; -1,0,\frac{n}{n_2},\ldots, \frac{n}{n_g}\Big),P_1 \bigg).\] We use the change of variables $x_1 \mapsto x_1 + 1$ and $x_i \mapsto x_i$ for $i=0, 2, \ldots, g$ to get a chart centered at the origin in which $\hat{S}$ is given by
	\begin{equation}\label{eq:Shat-at-P1}
		\hat{f}_i(x_0,x_1+1, x_2, \ldots, x_i) + \lambda_{i+1} \hat{f}_{i+1}(x_0,x_1+1, x_2, \ldots, x_{i+1}) = 0,
		\quad i = 1, \ldots, g-1.
	\end{equation}
Consider the first equation as a function $F: \C^2 \times \C \rightarrow \C$. Since $\frac{\partial F}{\partial x_1}(0) = n_1 \neq 0$, 
there exists some $h \in \C\{x_0,x_2\}$ such that the set of zeros of $F$ in $\C^3$ can be described as $\{(x_0,x_1,x_2)\in \C^3 \mid x_1 = h(x_0,x_2)\}$. In particular, \[(h(x_0,x_2) + 1)^{n_1} - 1 + \lambda_2(x_2^{n_2} - x_0^{b_2^{(1)}}(h(x_0,x_2) + 1)^{b_{21}}) = 0.\] Because the action on $x_1$ is trivial, and $x_1=h(x_0,x_2)$ provides a set of zeros in the quotient space, we know that $h(x_0,x_2)$ is invariant under the group action of type $(e_1; -1,\frac{n}{n_2})$. The above equation can be rewritten as
	\begin{equation}\label{eq:hx0x2}
		h(x_0,x_2)= u(x_0,x_2)(x_2^{n_2} - x_0^{b_2^{(1)}})
	\end{equation}
with $u(x_0,x_2)\in \C\{x_0,x_2\}$ a unit. For a better understanding of the whole process, we distinguish two cases: $g=2$ and $g \geq 3$.

\vspace{14pt}

If $g = 2$, then $\hat{S}$ is locally around $P_1 = [(0,\ldots,0)]$ defined by $x_1 = h(x_0,x_2)$. The projection $pr:\big(X(n_2; -1,0,\frac{n}{n_2}),0\big) \rightarrow \big(X(n_2; -1,\frac{n}{n_2}),0\big)$ given by $[(x_0,x_1,x_2))] \mapsto [(x_0,x_2)]$ induces locally an isomorphism of $\hat{S}$ onto $X(n_2; -1,\frac{n}{n_2})$. The total transform $\varphi_1^{-1}(Y)$ is given by  
	\begin{equation}\label{eq:step1-g2}
		x_0^n(x_2^{n_2} - x_0^{b_2^{(1)}}) = 0,
	\end{equation}
where $x_2^{n_2} - x_0^{b_2^{(1)}} = 0$ defines the strict transform $\hat{Y}$, and $x_0^n = 0$ the exceptional divisor $\E_1$. This shows in particular that $\E_1$ is irreducible as was already stated in~\eqref{eq:irred-e1}. The divisor $H_2$ is still $\{x_2 = 0\}$ in $\hat{S}$.

\vspace{14pt}

If $g \geq 3$, then one can rewrite the equations~\eqref{eq:Shat-at-P1} using~\eqref{eq:hx0x2} so that $\hat{S}$ is defined by the equation $x_1 = h(x_0,x_2)$ locally around $P_1 = [(0,\ldots,0)]$, and \[\hat{f}_i(x_0,1, x_2, \ldots, x_i) + \lambda_{i+1} \hat{f}_{i+1}(x_0,1, x_2, \ldots, x_{i+1}) + (x_2^{n_2}  -  x_0^{b_2^{(1)}})R_i^{(1)}(x_0,x_2,\ldots, x_i) = 0,\] for $i = 2, \ldots, g-1$, where every $R_i^{(1)}(x_0,x_2,\ldots, x_i) \in \C \{x_0,x_2,\ldots, x_i\}$ is compatible with the action (i.e., it defines a zero set in the quotient) and satisfies $R_i^{(1)}(0,x_2,\ldots, x_i) = 0$. The projection \[pr: \bigg(X\Big(e_1; -1,0,\frac{n}{n_2},\ldots, \frac{n}{n_g}\Big),0\bigg) \longrightarrow \bigg(X\Big(e_1; -1,\frac{n}{n_2},\ldots,\frac{n}{n_g}\Big),0\bigg)\] given by $[(x_0,x_1,x_2,\ldots, x_g)] \mapsto \ [(x_0,x_2, \ldots, x_g)]$ induces an isomorphism of $\hat{S}$ onto the subvariety of $X(e_1; -1,\frac{n}{n_2},\ldots, \frac{n}{n_g})$ defined by
	\begin{equation}\label{eq:Shat-ggeq3}
		\left\{\begin{array}{lcl}
			x_2^{n_2} - x_0^{b_2^{(1)}} + \lambda_3 (x_3^{n_3} - x_0^{b_3^{(1)}} x_2^{b_{32}}) + (x_2^{n_2}  -  x_0^{b_2^{(1)}})R^{(1)}_2(x_0,x_2) &=& 0 \\
			x_3^{n_3} - x_0^{b_3^{(1)}} x_2^{b_{32}} + \lambda_4 ( x_4^{n_4} - x_0^{b_4^{(1)}} x_2^{b_{42}} x_3^{b_{43}} ) + (x_2^{n_2}  -  x_0^{b_2^{(1)}}) R^{(1)}_3(x_0,x_2,x_3) &=& 0 \\ 
			& \vdots &  \\
			x_{g-1}^{n_{g-1}} - x_0^{b_{g-1}^{(1)}} x_2^{b_{(g-1)2}} \cdots x_{g-2}^{b_{(g-1)(g-2)}}  +  \lambda_g(x_g^{n_g} - x_0^{b_g^{(1)}} x_2^{b_{g2}} \cdots x_{g-1}^{b_{g(g-1)}}) \\
			\hspace{170pt}  +  (x_2^{n_2} - x_0^{b_2^{(1)}}) R^{(1)}_{g-1}(x_0,x_2,\ldots, x_{g-1}) &=& 0.
		\end{array}\right.
	\end{equation}
The total transform of $Y$ is given by
	\begin{equation}\label{eq:Yhat-ggeq3}
		\left\{\begin{array}{lcl}
			x_0^n(x_2^{n_2} - x_0^{b_2^{(1)}}) & = & 0 \\
			x_0^n(x_3^{n_3} - x_0^{b_3^{(1)}}x_2^{b_{32}}) & = & 0 \\
			& \vdots &  \\
			x_0^n(x_g^{n_g} - x_0^{b_g^{(1)}}x_2^{b_{g2}} \cdots x_{g-1}^{b_{g(g-1)}}) & = & 0,
		\end{array}\right.
	\end{equation}
where $x_0^n = 0$ corresponds to $\E_1$, and $H_i = \{x_i = 0\} \cap \hat{S}$ for $i = 2,\ldots, g$.

\vspace{14pt}
In both cases, we can conclude that $\varphi_1$ is an embedded $\Q$-resolution of $Y \subset S$ except at the point $P_1$. In Step 2, we will blow up at this point. If $g=2$, the curve $\hat{Y}$ is a cusp inside a cyclic quotient singularity, and we will finish right after this blow-up. If $g \geq 3$, we see in~\eqref{eq:Shat-ggeq3} and \eqref{eq:Yhat-ggeq3} that we were able to eliminate $x_1$, and that we obtained a situation very similar to the one we have started with, but with one equation in $\hat{S}$ and $\hat{Y}$ less, see~\eqref{eq:equations-S} and ~\eqref{eq:equations-Y}. However, Step 2 is essentially different and more challenging than Step 1 because the ambient space of $\hat{S}$ contains singularities.

\subsubsection{Step 2: weighted blow-up \texorpdfstring{$\pi_2$}{pi2} at \texorpdfstring{$P_1$}{P1} with weights \texorpdfstring{$\omega_2$}{w2}} 

We keep the distinction between $g = 2$ and $g \geq 3$.

\vspace{14pt}

If $g = 2$, then we consider the weighted blow-up $\pi_2 = \varphi_2$ of $\hat{S} = X(n_2;-1,\frac{n}{n_2})$, on which $\varphi_1^{-1}(Y)$ is given by~\eqref{eq:step1-g2}, at $P_1 = [(0,0)]$ with respect to the weights $\omega_2 := (1,\frac{b_2^{(1)}}{n_2})$. Note that $b_2^{(1)} = n_2\lbeta_2-n_1\lbeta_1$ is divisible by $n_2 = e_1$. This produces an irreducible exceptional divisor $\E_2 = \P^{1}_{\omega_2}(n_2; -1, \frac{n}{n_2}) \simeq \P^1$ with multiplicity $N_2 = n + b_2^{(1)} = n_2 \bar\beta_2$. The new strict transform $\hat{Y}$ is smooth and intersects $\E_2$ transversely at a smooth point of $\hat{S}$. The intersection $\E_2 \cap H_2$ is just one point, and the equation of the total transform of $Y$ around this point is $x_0^{n_2 \bar\beta_2} : X(n_2;-1,\bar\beta_2) \to \C$. Finally, $\E_2$ intersects $\E_1$ at a single point, and around this point we have the function
	\[x_0^n x_2^{n_2 \bar\beta_2} : X \left(\begin{array}{c|cc}
		\frac{n_2\lbeta_2 - n_1\lbeta_1}{n_2} & 1 & -1 \\
		n_2\lbeta_2 - n_1\lbeta_1 & -\lbeta_2 & \frac{n}{n_2}
	\end{array} \right) \longrightarrow \C.\]
The composition $\varphi := \varphi_1 \circ \varphi_2: \hat{S} \to S$ is an embedded $\Q$-resolution of $Y$. The final situation is illustrated in Figure~\ref{fig:second-step-g2}; the numbers in brackets are the orders of the underlying small groups at the intersection points $\E_1 \cap H_i$ for $i = 0,1$ and $\E_2 \cap H_2$, see Remark~\ref{rmk:order-small-groups-g=2}.

\begin{figure}[ht]
	\includegraphics{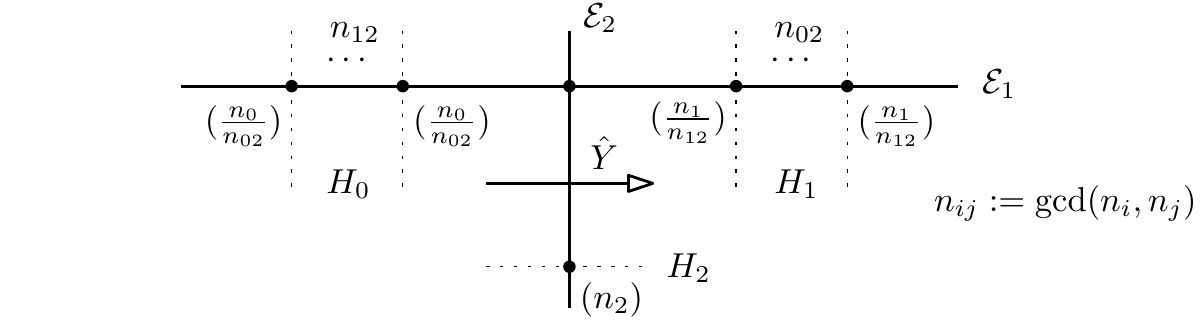}
	\caption{Embedded $\Q$-resolution of $Y \subset S$ for $g=2$.}
	\label{fig:second-step-g2}
\end{figure}

\vspace{14pt}

Assume $g \geq 3$ from now on. Consider the equations~\eqref{eq:Shat-ggeq3} and~\eqref{eq:Yhat-ggeq3} of $\hat{S}$ and $\hat{Y}$, respectively, around $P_1 = [(0,\ldots,0)]$ in  $X(e_1; -1,\frac{n}{n_2},\ldots, \frac{n}{n_g})$. Let $\pi_2$ be the blow-up of $X(e_1; -1,\frac{n}{n_2},\ldots, \frac{n}{n_g})$ at $P_1$ with respect to the weight vector $\omega_2 := ( 1, \frac{b_2^{(1)}}{n_2}, \ldots, \frac{b_2^{(1)}}{n_g}).$ Note that $b_2^{(1)} = n_2\lbeta_2 - n_1\lbeta_1$ is divisible by $e_1 = n_2e_2 = n_2 \cdots n_g$, see Section~\ref{SpaceMonomial}. Denote by $E_2 \simeq \P^{g-1}_{\omega_2}\big(e_1; -1, \frac{n}{n_2}, \ldots, \frac{n}{n_g}\big)$ the exceptional divisor of $\pi_2$, and let $\varphi_2 := \pi_2|_{\hat{S}}: \hat{S} \to \hat{S}$ be the restriction map with exceptional divisor $\E_2 := E_2 \cap \hat{S}$. Here, we denote the strict transform of $\hat{S}$ again by $\hat{S}$. As in Step 1, we start with the global situation.

\vspace{14pt}

\underline{Global situation}.
Because $R_i^{(1)}(x_0,x_2,\ldots, x_i)$ for $i = 2,\ldots, g-1$ is not a unit, and \[b_2^{(1)} < b_2^{(1)} + 1 < b_i^{(1)} + b_{i2} \frac{b_2^{(1)}}{n_2} + \cdots + b_{i(i-1)} \frac{b_2^{(1)}}{n_{i-1}}, \qquad i=3,\ldots,g,\] by~\eqref{eq:inequality-b} from Lemma~\ref{lemma:pos-powers}, the exceptional divisor $\E_2$ is in $\P^{g-1}_{\omega_2}\big(e_1; -1, \frac{n}{n_2}, \ldots, \frac{n}{n_g}\big)$ given by
	\[\left \{ \begin{array}{clccccl}
		x_2^{n_2} & - & x_0^{b_2^{(1)}}  &+& \lambda_3x_3^{n_3} &=& 0 \\
 		&  & x_3^{n_3}  &+& \lambda_4x_4^{n_4} &=& 0  \\
		& & & \vdots & & & \\
		& & x_{g-1}^{n_{g-1}} &+& \lambda_g x_g^{n_g} &=& 0. 
	\end{array}\right.\]
As these equations satisfy, modulo the coefficients, the conditions of Proposition~\ref{prop:number-of-comp}, we know that $\E_2$ has \[
\frac{n_3\cdots n_g}{\lcm(n_3, \ldots, n_g)} = \frac{e_2}{\lcm(n_3, \ldots, n_g)}\] irreducible components. Note that if $g=3$, then $\E_2$ is irreducible. The intersection $\E_2 \cap H_i$ for $i = 3,\ldots, g$ consists of the single point $P_2 := [1:1:0:\ldots:0]$, which is contained in all components of $\E_2$, while they are pairwise disjoint outside $P_2$. By equations~\eqref{eq:number-of-comp} and~\eqref{eq:intersections-x0} with $a_1p_2 - a_2p_1 = 0$, the intersection $\E_2 \cap \E_1$, which corresponds to $\E_2 \cap \{ x_0=0 \}$, consists of \[	\frac{n_2 n_4 \cdots n_g \gcd\big(\frac{b^{(1)}_2}{n_2}, \ldots, \frac{b^{(1)}_2}{n_g}\big)}{\frac{b^{(1)}_2}{n_3}} = \gcd\Big(\frac{e_1}{n_2},\ldots, \frac{e_1}{n_g} \Big) = \frac{e_1}{\lcm(n_2, \ldots, n_g)}.\] points. Note that this is precisely the number of irreducible components of $\E_1$, see~\eqref{eq:irred-e1}. Using ~\eqref{eq:number-of-comp} and~\eqref{eq:intersections-x1}, one can compute that there are 
	\begin{align*}
		& \frac{b^{(1)}_2 n_4 \cdots n_g \gcd \Big( e_1 \frac{b^{(1)}_2}{n_3},\frac{n_2 \bar{\beta}_2}{n_3} \gcd\big(\frac{b^{(1)}_2}{n_3},\ldots,\frac{b^{(1)}_2}{n_g}\big)\Big)}{e_1 \frac{b^{(1)}_2}{n_3} \frac{b^{(1)}_2}{n_3}} \\
		& = \gcd \Big( e_2,\frac{\lbeta_2}{n_3}, \ldots, \frac{\lbeta_2}{n_g} \Big) = \frac{\lbeta_2}{\lcm(\frac{\lbeta_2}{e_2},n_3,\ldots, n_g)}
	\end{align*}
points in the intersection $\E_2 \cap H_2$. The first equality is a consequence of the fact that $\frac{n_2 \bar{\beta}_2}{n_3} \gcd\big(\frac{b^{(1)}_2}{n_3},\ldots,\frac{b^{(1)}_2}{n_g}\big) = \frac{n_2b^{(1)}_2}{n_3} \gcd\big(\frac{\bar{\beta}_2}{n_3},\ldots,\frac{\bar{\beta}_2}{n_g}\big)$ as $n_3,\ldots, n_g$ divide $\lbeta_2$. To understand the combinatorics of $\E_2$ with $\E_1$, we can make use of Proposition~\ref{prop:intersection-with-previous-divisor}; the components of $\E_1$ are separated, each of them is intersected by precisely one component of $\E_2$, each intersection consists of only one point, and each component of $\E_2$ intersects \[\frac{n_2\lcm(n_3,\ldots, n_g)}{\lcm(n_2,\ldots, n_g)}\] components of $\E_1$, which is precisely the quotient of the number of components of $\E_1$ and $\E_2$. Finally, the strict transform $\hat{Y}$ of $Y$ intersects $\E_2$ only in the point $P_2$. Figure~\ref{fig:second-step} shows the global situation in $\hat{S}$ so far (for $g \geq 4$). The divisors are again visualized in a simplified way, and the intersection $\E_1 \cap \E_2$ is represented by white circles to emphasize the difference with the other points. 

\begin{figure}[ht]
	\includegraphics{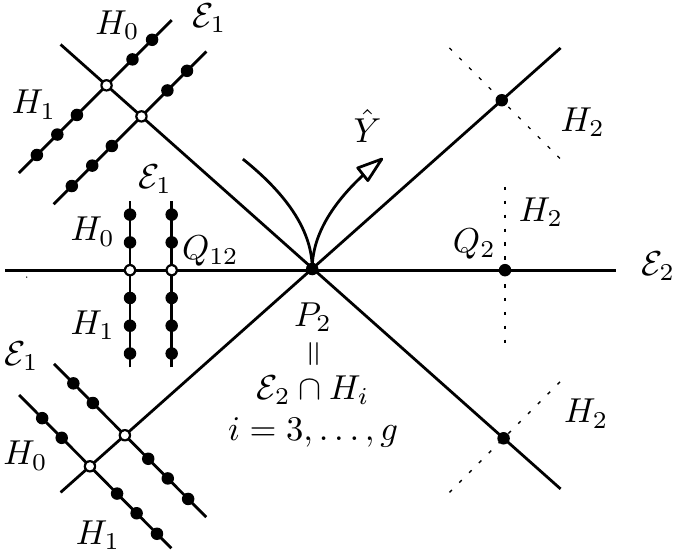}
	\caption{Step 2 in the resolution of $Y \subset S$ for $g \geq 4$.}
	\label{fig:second-step}
\end{figure}

\vspace{14pt}

As in Step 1, we make use of local coordinates to investigate the behavior around the singular points of $\hat{S}$. Note that $\hat{S}$ is smooth outside $\E_1 \cup \E_2$, and that it is again enough to consider the first two charts of the blow-up to understand the whole situation in $\E_2$.

\vspace{14pt}

\underline{Points in $\E_2 \setminus (\E_1 \cup \bigcup_{i=2}^g H_i)$}. The first chart is 
	\[U_0  = X\left(\!
		\begin{array}{c|cccc}
			1 & -1 & \frac{b_2^{(1)}}{n_2} & \ldots & \frac{b_2^{(1)}}{n_g} \\
			e_1 & -1 & \frac{n + b_2^{(1)}}{n_2} & \ldots & \frac{n + b_2^{(1)}}{n_g}
		\end{array}\!\right)
	= X \Big(e_1; -1, \frac{n_2\lbeta_2}{n_2}, \ldots, \frac{n_2\lbeta_2}{n_g} \Big),\] 
and we can compute the local equations of $\hat{S}$ and $\hat{Y}$ by pulling back~\eqref{eq:Shat-ggeq3} and~\eqref{eq:Yhat-ggeq3} via \[(x_0, x_2,\ldots, x_g) \longmapsto (x_0, x_0^{\frac{b_2^{(1)}}{n_2}} x_2, \ldots, x_0^{\frac{b_2^{(1)}}{n_g}} x_g).\] The total transform $\varphi_2^{-1} (\varphi_1^{-1} (Y))$ is given by $x_0^{n_2 \bar{\beta}_2} \hat{f}_2 = \cdots = x_0^{n_2 \bar{\beta}_2} \hat{f}_g = 0$, where 
	\[\left\{\begin{array}{l c l l}
		\hat{f}_2 & := &  x_2^{n_2} - 1 \\
		\hat{f}_3 & := &  x_3^{n_3} - x_0^{b_{3}^{(2)}} x_2^{b_{32}}\\
		& \vdots & & \\
		\hat{f}_g & := & x_g^{n_g} - x_0^{b_{g}^{(2)}} x_2^{b_{g2}} \cdots x_{g-1}^{b_{g(g-1)}}
	\end{array}\right.\]
correspond to the strict transform $\hat{Y}$, and $x_0^{n_2 \bar{\beta}_2}: \hat{S} \to \C$ to the exceptional divisor $\E_2$, see~\eqref{eq:def-b_i^{(k)}} and Lemma~\ref{lemma:pos-powers} for the definition and behavior of $b_i^{(2)} > 1$ for $i = 3, \ldots, g$. Here, we use again $\hat{f}_i$ to avoid complicating the notation. The strict transform $\hat{S}$ is defined by \[\hat{f}_i + \lambda_{i+1} \hat{f}_{i+1} + \hat{f}_2 R^{(1)}_i (x_0,x_0^{\frac{b^{(1)}_2}{n_2}}x_2,\ldots,x_0^{\frac{b^{(1)}_i}{n_i}}x_i) = 0,\qquad i = 2,\ldots, g-1,\] and $H_i$ for $i = 2,\ldots, g$ is still given by $\{x_i = 0\} \cap \hat{S}$. Observe that the divisor $\E_1$ is not visible in this chart. Similarly as in Step 1, the ambient space at points of $\E_2 \setminus \bigcup_{i=2}^g H_i$ is smooth, and the standard Jacobian criterion can be applied to see that $\hat{S}$ is also smooth at these points. To compute the multiplicity of $\E_2$, we consider a generic point $Q = [(0,a_2,\ldots,a_g)]$ in ${\E_2 \setminus \bigcup_{i=2}^g H_i}$ with $a_i \in \C^{\ast}$. The order of its stabilizer subgroup is $\gcd(e_1, \frac{n_2\lbeta_2}{n_2}, \ldots, \frac{n_2\lbeta_2}{n_g})$, and, as germs, $(U_0,Q) = \big(X (e_1; -1, \frac{n_2\lbeta_2}{n_2}, \ldots,\frac{n_2\lbeta_2}{n_g}),Q\big)$ equals
	\begin{equation}\label{eq:generic-E2}
		\bigg(X\Big(\gcd\Big(e_1, \frac{n_2\lbeta_2}{n_2}, \ldots, \frac{n_2\lbeta_2}{n_g}\Big);-1,0,\ldots, 0\Big),Q\bigg)\simeq (\C^g,0).
	\end{equation}
Under this isomorphism, the function $x_0^{n_2 \bar{\beta}_2}:U_0 \to \C$ becomes $x_0^{N_2}: \C^g \to \C$ with 
\[N_2 := \frac{n_2\lbeta_2}{\gcd \Big( e_1, \frac{n_2\lbeta_2}{n_2}, \ldots, \frac{n_2\lbeta_2}{n_g} \Big)} = \lcm \Big(\frac{\lbeta_2}{e_2}, n_2, \ldots, n_g \Big)\] the required multiplicity.

\vspace{14pt}

\underline{Points in the intersection $\E_2 \cap H_2$}. The order of the stabilizer subgroup of a point $Q_2 = [(0,0,a_3,\ldots,a_g)] \in \E_2 \cap H_2$ is $\gcd(e_1, \frac{n_2 \bar{\beta}_2}{n_3}, \ldots, \frac{n_2 \bar{\beta}_2}{n_g})$. Changing the variables as in~\eqref{eq:points-Q1}, one gets the following situation at $[(x_0,x_2)]$:
	\begin{equation}\label{eq:points-Q2}
		\left\{\begin{aligned}
			& \hat{S} = X \bigg( \gcd\Big(e_1, \frac{n_2 \bar{\beta}_2}{n_3}, \ldots, \frac{n_2 \bar{\beta}_2}{n_g}\Big);-1, \lbeta_2 \bigg) \\
			& \E_2: \ x_0^{n_2 \bar{\beta}_2} = 0, \qquad H_2: \ x_2 = 0,
		\end{aligned}\right.
	\end{equation}
and the total transform of $Y$ defines a $\Q$-normal crossings divisor around these points.

\vspace{14pt}

\underline{Points in the intersection $\E_2 \cap \E_1$}. These points cannot be seen in the first chart. Therefore, we consider the second chart $U_1$ where the exceptional divisor $\E_2$ corresponds to $x_2=0$; it is given by 
	\[\left(\!\begin{array}{c|ccccc}
		\frac{b_2^{(1)}}{n_2} & 1 & -1 & \frac{b_2^{(1)}}{n_3} & \cdots & \frac{b_2^{(1)}}{n_g} \\
		\frac{b_2^{(1)}}{n_2} e_1 & - \bar{\beta}_2 & \frac{n}{n_2} & 0 & \cdots & 0
	\end{array}\!\right)\]
via \[(x_0, x_2,\ldots, x_g) \longmapsto (x_0 x_2, x_2^{\frac{b_2^{(1)}}{n_2}}, x_2^{\frac{b_2^{(1)}}{n_3}} x_3, \ldots, x_2^{\frac{b_2^{(1)}}{n_g}} x_g).\] A point $Q_{12} \in \E_2 \cap \E_1$ is in this chart of the form $[(0,0,a_3,\ldots,a_g)]$ for some $a_i \in \C^{\ast}$. The stabilizer subgroup of $Q_{12}$ is the product of two cyclic groups of orders $\gcd(\frac{b_2^{(1)}}{n_2},\ldots,\frac{b_2^{(1)}}{n_g}) = \frac{n_2 \bar{\beta}_2 - n_1 \bar{\beta}_1}{\lcm(n_2,\ldots,n_g)}$ and $\frac{b_2^{(1)}}{n_2} e_1 = ( n_2 \bar{\beta}_2 - n_1 \bar{\beta}_1 ) e_2$, and one obtains the following local situation around $Q_{12}$ in the variables $x_0$ and $x_2$:
	\begin{equation}
		\left\{\begin{aligned}\label{eq:Q12}
			& \hat{S} = X \left(\!\!\! \begin{array}{c|cc}
			\frac{n_2 \bar{\beta}_2 - n_1 \bar{\beta}_1}{\lcm(n_2,\ldots,n_g)} & 1 & -1 \\[0.2cm]
			(n_2 \bar{\beta}_2 - n_1 \bar{\beta}_1) e_2 & - \bar{\beta}_2 & \frac{n}{n_2}
			\end{array} \!\!\right) \\[0.1cm]
			& \E_1: \ x_0^n = 0, \qquad \E_2: \ x_2^{n_2\bar{\beta}_2} = 0.
		\end{aligned}\right.
	\end{equation}
Hence, the total transform $\varphi_2^{-1}( \varphi_1^{-1} (Y))$ has $\Q$-normal crossings at each of the points in the intersection $\E_2 \cap \E_1$. Note that these data are compatible with the case $g=2$.

\vspace{14pt}

\underline{The point $P_2 = \E_2 \cap H_i$ for $i = 3,\ldots,g$}. This point considered in the first chart $U_0 = X(e_1; -1, \frac{n_2\lbeta_2}{n_2}, \ldots, \frac{n_2\lbeta_2}{n_g})$ is given by $P_2 = [(0,1,0,\ldots,0)]$, and its stabilizer subgroup has order $\gcd ( e_1, \frac{n_2\lbeta_2}{n_2}) = e_2$. Hence, as germs, \[\bigg( X \Big( e_1; -1, \frac{n_2\lbeta_2}{n_2}, \ldots, \frac{n_2\lbeta_2}{n_g} \Big), P_2 \bigg) = \bigg( X \Big( e_2; -1,0, \frac{n_2\lbeta_2}{n_3}, \ldots, \frac{n_2\lbeta_2}{n_g} \Big), P_2 \bigg).\] The idea is to follow the same procedure as the one we used for the point $P_1$ in Step 1. We use the change of variables $x_2 \mapsto x_2 + 1$ and $x_i \mapsto x_i$ for $i=0,3,\ldots,g$ to get a chart centered around the origin and we discuss two cases separately.

\vspace{14pt}

If $g = 3$, then $\E_2$ is irreducible, and using the Implicit Function Theorem, one easily sees that $\hat S \simeq X \big( n_3; -1,\frac{n_2\lbeta_2}{n_3} \big)$ with variables $[(x_0,x_3)]$ on which $H_3 = \{x_3 = 0\}$ and the total transform of $Y$ is given by $x_0^{n_2\lbeta_2} ( x_3^{n_3}  -  x_0^{b_3^{(2)}} ) = 0.$ The first factor represents the exceptional divisor $\E_2$, and the other the strict transform of $Y$. 

\vspace{14pt}

If $g \geq 4$, then the germ $(\hat{S},P_2 = [(0,\ldots,0)] )$ can be described inside the ambient space $X\left(e_2; -1,\frac{n_2\lbeta_2}{n_3}, \ldots, \frac{n_2\lbeta_2}{n_g}\right)$ in the variables $x_0,x_3,\ldots,x_g$ by equations of the form
	\begin{equation}\label{eq:Shat-ggeq4}
		\left\{\begin{array}{lcl}
			x_3^{n_3} - x_0^{b_3^{(2)}} + \lambda_4(x_4^{n_4} - x_0^{b_4^{(2)}}x_3^{b_{43}}) + (x_3^{n_3} - x_0^{b_3^{(2)}}) R^{(2)}_3(x_0,x_3) & = & 0 \\[5pt]
			x_4^{n_4} - x_0^{b_4^{(2)}}x_3^{b_{43}} + \lambda_5(x_5^{n_5} - x_0^{b_5^{(2)}}x_3^{b_{53}}x_4^{b_{54}})+ ( x_3^{n_3} - x_0^{b_3^{(2)}} ) R^{(2)}_4(x_0,x_3,x_4) & = & 0 \\
			& \vdots & \\
			x_{g-1}^{n_{g-1}} - x_0^{b_{g-1}^{(2)}} x_3^{b_{(g-1)3}} \cdots x_{g-2}^{b_{(g-1)(g-2)}} + \lambda_g(x_g^{n_g} - x_0^{b_g^{(2)}}x_3^{b_{g3}} \cdots x_{g-1}^{b_{g(g-1)}}) \\
			\hspace{170pt} + ( x_3^{n_3} - x_0^{b_3^{(2)}} ) R^{(2)}_{g-1}(x_0,x_3,\ldots, x_{g-1}) & = & 0,
		\end{array}\right.
	\end{equation}
where every $R^{(2)}_i(x_0,x_3,\ldots, x_i) \in \mathbb{C} \{ x_0, x_3,\ldots, x_i \}$ with $R_i^{(2)}(0,x_3,\ldots,x_i)=0$, and the total transform of $Y$ is given by
	\begin{equation}\label{eq:Yhat-ggeq4}
		\left\{ \begin{array}{lcl}
			x_0^{n_2\lbeta_2}(x_3^{n_3} - x_0^{b_3^{(2)}}) &=& 0 \\
			x_0^{n_2\lbeta_2}(x_4^{n_4} - x_0^{b_4^{(2)}}x_3^{b_{43}}) &=& 0 \\
			& \vdots &  \\
			x_0^{n_2\lbeta_2}(x_g^{n_g} - x_0^{b_g^{(2)}}x_3^{b_{g3}} \cdots x_{g-1}^{b_{g(g-1)}}) &=& 0.
		\end{array}\right.
	\end{equation}
Here, $x_0^{n_2\lbeta_2} = 0$ corresponds to the exceptional divisor $\E_2$, and $x_i = 0$ to $H_i$ for $i = 3,\ldots, g$.

\vspace{14pt}

The composition $\varphi_1 \circ \varphi_2$ is an embedded $\Q$-resolution of $Y \subset S$ except at the point $P_2$. Hence, in Step 3, we will blow up at this point. If $g=3$, this third step will finish the resolution. If $g \geq 3$, one sees in~\eqref{eq:Shat-ggeq4} and \eqref{eq:Yhat-ggeq4} that $x_2$ is eliminated and that the situation is the same as in the beginning of Step 2 but in one variable less, see~\eqref{eq:Shat-ggeq3} and~\eqref{eq:Yhat-ggeq3}. The idea is to repeat this procedure until we obtain a cusp in the $(g-1)$th step in a cyclic quotient singularity with variables $x_0$ and $x_g$. Then, one additional blow-up resolves the singularity. Because the next steps will be essentially the same as Step 2, we consider all of them simultaneously in Step $k$ for $k \geq 2$.

\subsubsection{Step $k$: weighted blow-up \texorpdfstring{$\pi_k$}{pik} at \texorpdfstring{$P_{k-1}$}{Pk-1} with weights \texorpdfstring{$\omega_k$}{wk}} Let $k \in \{ 2,\ldots,g\}$ and assume that the first $k-1$ blow-ups have already been performed. Recall that we denote by $\E_1, \ldots, \E_{k-1}$ the exceptional divisors of the corresponding weighted blow-ups $\varphi_1,\ldots,\varphi_{k-1}$ with respect to the weights $\omega_1,\ldots,\omega_{k-1}$, respectively.  We again consider two cases.

\vspace{14pt}

If $k = g$, then at the end of the $(g-1)$th step, the total transform $(\varphi_1 \circ \cdots \circ \varphi_{g-1})^{-1}(Y)$ is given by $x_0^{n_{g-1} \bar{\beta}_{g-1}} (x_g^{n_g} - x_0^{b_g^{(g-1)}}) = 0$ in $\hat{S} = X(n_g;-1, \frac{n_{g-1} \bar{\beta}_{g-1}}{n_g})$ around $P_{g-1} = [(0,0)]$. The blow-up $\pi_g = \varphi_g$ at $P_{g-1}$ with respect to $\omega_g = (1,\frac{b_g^{(g-1)}}{n_g})$ yields an irreducible exceptional divisor $\E_g = \P^{1}_{\omega_g}(n_g; -1, \frac{n_{g-1}\bar\beta_{g-1}}{n_g})\simeq \P^1$ with multiplicity $N_g = n_{g-1} \bar{\beta}_{g-1} + b_g^{(g-1)} = n_g \bar\beta_g$. The intersection $\E_g \cap H_g$ consists of a single point, and the equation of the total transform of $Y$ at this point is $x_0^{n_g \bar\beta_g}: X(n_g;-1,\bar\beta_g) \to \C$. The intersection $\E_g \cap \E_{g-1}$ consists also of one point around which we have the function
	\[x_0^{n_{g-1} \bar{\beta}_{g-1}} x_g^{n_g \bar\beta_{g}}:
		X\left(\begin{array}{c|cc}
		\frac{n_g\lbeta_g - n_{g-1}\lbeta_{g-1}}{n_g} & 1 & -1 \\
		n_g\lbeta_g - n_{g-1}\lbeta_{g-1} & -\lbeta_g & \frac{n_{g-1}\bar\beta_{g-1}}{n_g}
	\end{array} \right) \longrightarrow \C.\]
Finally, the strict transform $\hat{Y}$ is smooth and intersects $\E_g$ in a transversal way at a smooth point of $\hat{S}$. Hence, the morphism $\varphi := \varphi_1 \circ \cdots \circ \varphi_g: \hat{S} \to S$ defines an embedded $\mathbb{Q}$-resolution of $Y \subset S$, cf.~Figure~\ref{fig:second-step-g2}.

\vspace{14pt}

Assume now that $2 \leq k \leq g-1$. In the first chart of $\varphi_{k-1}$ centered at $P_{k-1}$, one has \[P_{k-1} = [(0,\ldots,0)] \in X \left( e_{k-1}; -1, \frac{n_{k-1} \bar{\beta}_{k-1}}{n_k},\ldots, \frac{n_{k-1} \bar{\beta}_{k-1}}{n_g} \right)\] in the variables $(x_0,x_k,\ldots,x_g)$, and the strict transforms $\hat{S}$ and $\hat{Y}$ are given by equations as in~\eqref{eq:Shat-ggeq3} and~\eqref{eq:Yhat-ggeq3}, respectively. The strict transform $\E_{k-1}$ is given by $x_0^{n_k\lbeta_k}=0$, and $H_i = \{x_i = 0\} \cap \hat{S}$ for $i = k,\ldots,g$. Let $\pi_k$ be the weighted blow-up at $P_{k-1}$ with respect to $\omega_k = ( 1, \frac{b_k^{(k-1)}}{n_k}, \ldots, \frac{b_k^{(k-1)}}{n_g}),$ where $b_k^{(k-1)}  = n_k\lbeta_k - n_{k-1} \lbeta_{k-1}$ is divisible by $e_{k-1} = n_ke_k = n_k\cdots n_g$. Let $E_k \simeq \P^{g-k+1}_{\omega_k} \big( e_{k-1}; -1, \frac{n_{k-1} \bar{\beta}_{k-1}}{n_k}, \ldots, \frac{n_{k-1} \bar{\beta}_{k-1}}{n_g} \big)$ be the exceptional divisor of $\pi_k$ and let $\varphi_k := \pi_k|_{\hat{S}}:\hat{S}\to \hat{S}$ be the restriction map with exceptional divisor $\E_k := E_k \cap \hat{S}$. Once more, we split the exposition in different parts.

\vspace{14pt}

\underline{Global situation}. The new exceptional divisor $\E_k$ is given in homogeneous coordinates $[x_0:x_k:\ldots:x_g] \in \P^{g-k+1}_{\omega_k} \big( e_{k-1}; -1, \frac{n_{k-1} \bar{\beta}_{k-1}}{n_k}, \ldots, \frac{n_{k-1} \bar{\beta}_{k-1}}{n_g} \big)$ by the equations
	\begin{equation}\label{eq:Ek-homog}
		\left \{ \begin{array}{clccccl}
			x_k^{n_k} & - & x_0^{b_k^{(k-1)}} & + & \lambda_{k+1}x_{k+1}^{n_{k+1}} & = & 0 \\[5pt]
			& & x_{k+1}^{n_{k+1}} & + & \lambda_{k+2}x_{k+2}^{n_{k+2}} & = & 0 \\
			& & & \vdots & & \\
			& & x_{g-1}^{n_{g-1}} & + & \lambda_g x_g^{n_g} & = & 0,
		\end{array}\right.
	\end{equation}
and has \[\frac{n_{k+1} \cdots \, n_g}{\lcm(n_{k+1},\ldots, n_g)} = \frac{e_k}{\lcm(n_{k+1},\ldots, n_g)}\] irreducible components that contain the point $P_k = [1:1:0:\ldots:0]$ and are pairwise disjoint outside $P_k$ by Proposition~\ref{prop:number-of-comp}. Note that $\E_k$ is irreducible if $k=g-1$, and that $P_k = \E_k \cap H_i$ for $i = k+1, \ldots, g$. With Proposition~\ref{prop:number-of-comp}, one can also compute that $\E_k$ has \[\frac{e_{k-1}}{\lcm(n_k,\ldots, n_g)}\] intersections with $\E_{k-1}$ and \[\frac{\lbeta_k}{\lcm(\frac{\lbeta_k}{e_k},n_{k+1},\ldots, n_g)}\] with $H_k$, where the cardinality of $\E_k \cap \E_{k-1}$ is precisely the number of components of $\E_{k-1}$. Furthermore, Proposition~\ref{prop:intersection-with-previous-divisor} tells us that the components of $\E_{k-1}$ are disjoint, and that the intersections of $\E_k$ and $\E_{k-1}$ are equally distributed. Lastly, the strict transform $\hat{Y}$ of $Y$ and $\E_k$ intersect in the single point $P_k$. In the next step, we will blow up this point.

\vspace{14pt}

\underline{Points in $\E_k \setminus (\E_{k-1} \cup \bigcup_{i=k}^g H_i)$}. Outside the coordinate axes of $\E_k$, the Jacobian criterion can be used to check that $\hat{S}$ is smooth. Studying the stabilizer subgroup of a generic point in $\E_k \setminus (\E_{k-1} \cup \bigcup_{i=k}^g H_i)$ using local equations in the first chart as in~\eqref{eq:generic-E2}, one can compute the multiplicity $N_k$ of $\E_k$, which is equal to $\lcm( \frac{\lbeta_k}{e_k}, n_k, \ldots, n_g )$.

\vspace{14pt}

\underline{Points in the intersection $\E_k \cap H_k$}. The local situation around these points can be studied from the local charts as in~\eqref{eq:points-Q2} and becomes at $[(x_0,x_k)]$ the following:
	\begin{equation}
		\left\{\begin{aligned}
			& \hat{S} = X \bigg( \gcd \Big( e_{k-1}, \frac{n_k \bar{\beta}_k}{n_{k+1}}, \ldots,\frac{n_k \bar{\beta}_k}{n_g} \Big);-1,\lbeta_k \bigg) \\
			& \E_k: \ x_0^{n_k \bar{\beta}_k} = 0, \qquad H_k: \ x_k = 0.
		\end{aligned}\right.
	\end{equation}
Clearly, the total transform of $Y$ under $\varphi_1 \circ \cdots \circ \varphi_k$ is a $\Q$-normal crossings divisor around these points.

\vspace{14pt}

\underline{Points in the intersection $\E_k \cap \E_{k-1}$}. Using the second chart on which $\E_k$ corresponds to $x_k = 0$, the local equations at these points are given by 
	\begin{equation}
		\left\{\begin{aligned}
			& \hat{S} = X \left(\!\!\! \begin{array}{c|cc}
			\frac{n_k \bar{\beta}_k - n_{k-1} \bar{\beta}_{k-1}}{\lcm(n_k,\ldots,n_g)} & 1 & -1 \\[0.2cm]
			(n_k \bar{\beta}_k - n_{k-1} \bar{\beta}_{k-1}) e_k & - \bar{\beta}_k & \frac{n_{k-1}\lbeta_{k-1}}{n_k}
			\end{array} \!\!\right) \\[5pt]
			& \E_{k-1}: \ x_0^{n_{k-1} \bar{\beta}_{k-1}} = 0, \qquad \E_k: \ x_k^{n_k \bar{\beta}_k} = 0,
		\end{aligned}\right.
	\end{equation}
cf.~\eqref{eq:Q12}, and the total transform of $Y$ has again $\Q$-normal crossings at each of these points.
\vspace{14pt}

\underline{The point $P_k = \E_k \cap H_i$ for $i = k+1,\ldots,g$}. After centering the first chart around $P_k$, we distinguish for the last time two different cases.

\vspace{14pt}

If $k = g-1$, then $\hat S \simeq X \big( n_g; -1,\frac{n_{g-1}\lbeta_{g-1}}{n_g} \big)$ in the variables $x_0$ and $x_g$. The total transform $(\varphi_1 \circ \cdots \circ \varphi_{g-1})^{-1}(Y)$ of $Y$ is defined by the equation $x_0^{n_{g-1}\lbeta_{g-1}} (x_g^{n_g}  -  x_0^{b_g^{(g-1)}})= 0,$ where the exceptional divisor $\E_g$ is given by $x_0^{n_{g-1}\bar\beta_{g-1}} = 0$, the strict transform $\hat{Y}$ by $x_g^{n_g}  -  x_0^{b_g^{(g-1)}}= 0$, and $H_g$ by $x_g = 0$. 

\vspace{14pt}

If $2 \leq k \leq g-2$, then $\hat{S}$ is locally around $P_k = [(0,\ldots,0)]$ in $X\left(e_k; -1,\frac{n_k\lbeta_k}{n_{k+1}}, \ldots, \frac{n_k\lbeta_k}{n_g}\right)$ with the variables $x_0,x_{k+1},\ldots,x_g$ given by equations of the form
	\[\left\{ \begin{array}{lcl}
		x_{k+1}^{n_{k+1}} - x_0^{b_{k+1}^{(k)}} + \lambda_{k+2} ( x_{k+2}^{n_{k+2}} - x_0^{b^{(k)}_{k+2}}x_{k+1}^{b_{(k+2)(k+1)}} ) + ( x_{k+1}^{n_{k+1}}  -  x_0^{b^{(k)}_{k+1}} ) R^{(k)}_{k+1}(x_0,x_{k+1}) & = & 0 \\[5pt]
		x_{k+2}^{n_{k+2}} - x_0^{b^{(k)}_{k+2}} x_{k+1}^{b_{(k+2)(k+1)}}  + \lambda_{k+3} ( x_{k+3}^{n_{k+3}} - x_0^{b^{(k)}_{k+3}} x_{k+1}^{b_{(k+3)(k+1)}} x_{k+2}^{b_{(k+3)(k+2)}} ) \\
		\hfill + ( x_{k+1}^{n_{k+1}} - x_0^{b^{(k)}_{k+1}} ) R^{(k)}_{k+2}(x_0,x_{k+1},x_{k+2}) & = & 0 \\
		& \vdots &\\
		x_{g-1}^{n_{g-1}} - x_0^{b^{(k)}_{g-1}} x_{k+1}^{b_{(g-1)(k+1)}} \cdots x_{g-2}^{b_{(g-1)(g-2)}} + \lambda_g ( x_g^{n_g} - x_0^{b^{(k)}_g} x_{k+1}^{b_{g(k+1)}} \cdots x_{g-1}^{b_{g(g-1)}} ) \\
		\hspace{185pt} +  ( x_{k+1}^{n_{k+1}} - x_0^{b^{(k)}_{k+1}} ) R^{(k)}_{g-1}(x_0,x_{k+1},\ldots,x_{g-1}) & = & 0,
	\end{array}\right.\]
for some $R_i^{(k)}(x_0,x_{k+1},\ldots, x_i) \in \mathbb{C}\{x_0,x_{k+1},\ldots,x_i\}$ satisfying $R_i^{(k)}(0,x_{k+1},\ldots, x_i) = 0$. The total transform of $Y$ is defined by
	\[\left\{ \begin{array}{lcl}
		x_0^{n_k\lbeta_k} ( x_{k+1}^{n_{k+1}} - x_0^{b^{(k)}_{k+1}} ) & = & 0 \\
		x_0^{n_k\lbeta_k} ( x_{k+2}^{n_{k+2}} - x_0^{b^{(k)}_{k+2}}x_{k+1}^{b_{(k+2)(k+1)}} ) & = & 0 \\
		& \vdots & \\
		x_0^{n_k\lbeta_k} ( x_g^{n_g} - x_0^{b^{(k)}_g}x_{k+1}^{b_{g(k+1)}} \cdots x_{g-1}^{b_{g(g-1)}} ) & = & 0,
\end{array}\right.\]
where $\E_k = \{ x_0^{n_k\lbeta_k} = 0 \}$ and $H_i = \{ x_i = 0 \}$ for $i=k+1,\ldots,g$. 

\vspace{14pt}

To conclude, we have exactly the same situation as the one we had at the beginning of Step $k$ but in one variable less. Further blowing up at the point $P_k$ and repeating this procedure will lead after $g$ steps to an embedded $\Q$-resolution of $Y \subset S$ as illustrated in Figure \ref{fig:final-resolution}.

\begin{figure}[ht]
\includegraphics{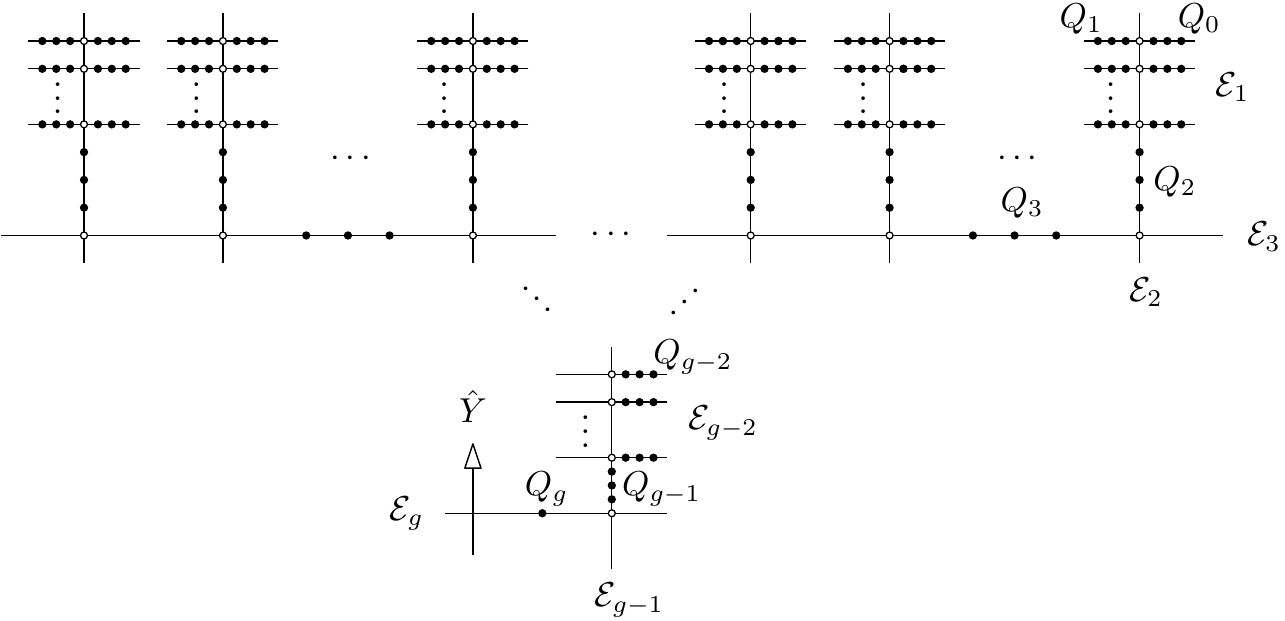}    
\caption{Resolution of $Y \subset S$.}
\label{fig:final-resolution}
\end{figure}

\subsection{Main result} We summarize the previous construction in the following result. 

\begin{theorem}\label{thm:resolutionY}
Let $Y \subset \C^{g+1}$ be a space monomial curve defined by the equations~\eqref{eq:equations-Y} with $g \geq 2$, and consider $Y$ as a Cartier divisor on a generic surface $S = S(\lambda_2,\ldots,\lambda_g) \subset \C^{g+1}$ given by~\eqref{eq:equations-S}, where $(\lambda_2,\ldots,\lambda_g)$ are chosen such that Section~\ref{RedCurveSurface} applies. There exists an embedded $\mathbb{Q}$-resolution $\varphi = \varphi_1 \circ \cdots \circ \varphi_g : \hat{S} \to S$ of $Y \subset S$ which is a composition of $g$ weighted blow-ups $\varphi_k$ with exceptional divisor $\E_k$ such that the pull-back of $Y$ is given by \[\varphi^{\ast}Y = \hat{Y} + \sum_{\smallmatrix 1 \leq k \leq g \\ 1 \leq j \leq r_k \endsmallmatrix} N_k \E_{kj},\] where $\E_k = \E_{k1} + \cdots + \E_{kr_k}$ is the decomposition of $\E_k$ into $r_k = \frac{e_k}{\lcm(n_{k+1},\ldots,n_g)}$ if $k=1,\ldots,g-2$ and $r_{g-1} = r_g=1$ irreducible components, and $N_k = \lcm ( \frac{\lbeta_k}{e_k}, n_k, \ldots, n_g )$ is the multiplicity of $\E_k$. Furthermore, each divisor $\E_k$ for $k = 2,\ldots, g-1$ only intersects $\E_{k-1}$ and $\E_{k+1}$, and $\E_g$ only intersects $\E_{g-1}$. Finally, for every $k = 2,\ldots, g$, the intersections of $\E_{k-1}$ and $\E_k$ are equally distributed; each of the components $\E_{kj}$ of $\E_k$ intersects precisely $\frac{r_{k-1}}{r_k}$ components of $\E_{k-1}$, each component $\E_{(k-1)j}$ of $\E_{k-1}$ is intersected by only one of the components of $\E_k$, and each non-empty intersection between two components $\E_{kj}$ and $\E_{(k-1)j'}$ consists of a single point. In particular, the dual graph of the resolution is a tree as in Figure~\ref{fig:dual-graph}.
\end{theorem}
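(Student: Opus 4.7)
The proof proceeds by induction on $g \geq 2$, where the inductive structure follows the $g$-step construction outlined above. The central observation is that after Step $1$, which is the weighted blow-up of $\C^{g+1}$ at the origin with weights $\omega_1 = (n/n_0,\ldots,n/n_g)$, the local model around the new center $P_1$ can be rewritten (after solving for $x_1$ via the Implicit Function Theorem and projecting) as a pair $\hat{Y} \subset \hat{S}$ of the same shape as the original $Y \subset S$ but with $g-1$ defining equations and inside the ambient quotient space $X(e_1;-1,\frac{n}{n_2},\ldots,\frac{n}{n_g})$. The crucial technical input is Lemma~\ref{lemma:pos-powers}, which guarantees that the monomials $x_0^{b_i^{(k)}}$ appearing after each pullback have $b_i^{(k)} > 1$, so every weighted blow-up genuinely separates terms in the required way. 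The plan is therefore to run the same argument at each $P_{k-1}$ with weights $\omega_k = (1,\frac{b_k^{(k-1)}}{n_k},\ldots,\frac{b_k^{(k-1)}}{n_g})$, checking that all statements of the theorem follow formally from the local behavior already computed in the construction.

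At each step, four things must be checked: (i) the local equations around $P_{k-1}$ indeed admit the claimed weighted blow-up (this uses that $b_k^{(k-1)} = n_k\bar\beta_k - n_{k-1}\bar\beta_{k-1}$ is divisible by $e_{k-1} = n_k \cdots n_g$); (ii) the exceptional divisor $\E_k$ has the claimed number of components $r_k = e_k/\lcm(n_{k+1},\ldots,n_g)$ and meets $\E_{k-1}$, $H_k$ in the right number of points; (iii) the multiplicity of $\E_k$ equals $N_k = \lcm(\bar{\beta}_k/e_k,n_k,\ldots,n_g)$; and (iv) away from $P_k$, the total transform is $\Q$-normal crossings. Items (ii) and (iii) are direct applications of Proposition~\ref{prop:number-of-comp} and the multiplicity formula~\eqref{eq:def-mult} to the homogeneous equations~\eqref{eq:Ek-homog}; in particular, computing $N_k$ reduces to the identity $\gcd(e_{k-1},\frac{n_k\bar\beta_k}{n_k},\ldots,\frac{n_k\bar\beta_k}{n_g}) = n_k\bar{\beta}_k/N_k$, which follows from the relation~\eqref{eq:rel-gcd-lcm} together with the divisibility $n_j \mid \bar{\beta}_k$ for $j>k$ from property (ii) of Section~\ref{SpaceMonomial}. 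Item (iv) is verified by the explicit coordinate computations done in the construction, case by case for the generic points of $\E_k$, the points of $\E_k \cap H_k$, and the points of $\E_k \cap \E_{k-1}$.

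The main obstacle is the combinatorial statement that the intersections of $\E_{k-1}$ and $\E_k$ are equally distributed. This is where Proposition~\ref{prop:intersection-with-previous-divisor} is essential: one has to verify that the hypotheses of that proposition apply at each stage, namely that the weights and the action on the ambient quotient space $X(e_{k-1};-1,\frac{n_{k-1}\bar\beta_{k-1}}{n_k},\ldots,\frac{n_{k-1}\bar\beta_{k-1}}{n_g})$ satisfy $a_i p_j - a_j p_i = 0$ for all $i,j \in \{k,\ldots,g\}$, which is immediate since the entries of the action on $x_k,\ldots,x_g$ are proportional to the weights. Together with the fact that $P_k = \E_k \cap H_i$ for all $i \geq k+1$ is the unique point of $\E_k$ not lying on $\E_{k-1}$ or $H_k$ that can serve as the next center, this propagates the tree structure of the dual graph.

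The induction terminates in two equivalent ways at Step $g$: either one reaches the cyclic quotient space $X(n_g;-1,\frac{n_{g-1}\bar\beta_{g-1}}{n_g})$ containing the cusp $\{x_g^{n_g} - x_0^{b_g^{(g-1)}} = 0\}$, which is resolved by the single $(1,\frac{b_g^{(g-1)}}{n_g})$-blow-up computed in the construction, yielding $r_g = 1$ and $N_g = n_g\bar\beta_g$; or, working directly with $g = 2$, the same blow-up at $P_1$ provides the base case. The formula for $\varphi^{\ast}Y$ follows by additivity of pullbacks of Cartier divisors across the $g$ blow-ups and the multiplicities $N_k$ computed locally; the disjointness of the components $\E_{kj}$ for $k \leq g-2$ outside the common point $P_k$, combined with the fact that $P_k$ is removed by the next blow-up, confirms that $\E_k$ only meets $\E_{k-1}$ and $\E_{k+1}$, producing the tree of Figure~\ref{fig:dual-graph}.
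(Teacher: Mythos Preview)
Your proposal is correct and follows essentially the same approach as the paper: the theorem is stated there as a summary of the explicit step-by-step construction in Section~\ref{Resolution}, and your outline faithfully reproduces that construction, invoking Proposition~\ref{prop:number-of-comp}, Proposition~\ref{prop:intersection-with-previous-divisor}, Lemma~\ref{lemma:pos-powers}, and the local multiplicity computations exactly where the paper does. The only cosmetic difference is that you phrase the iteration as ``induction on $g$'' whereas the paper simply carries out $g$ consecutive weighted blow-ups for a fixed $g$; your own description makes clear you mean the latter, so this is not a substantive divergence.
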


\begin{figure}[ht]
\includegraphics{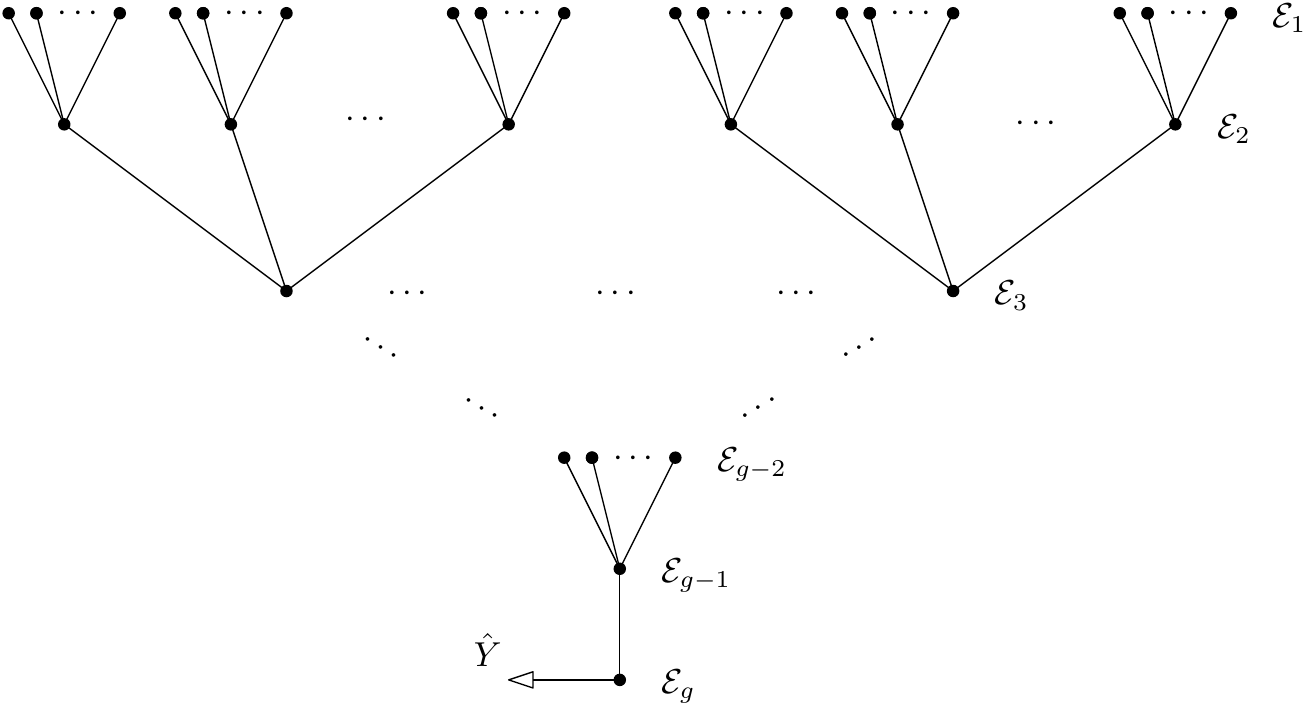}
\caption{Dual graph of the resolution of $Y \subset S$.}
\label{fig:dual-graph}
\end{figure}

\begin{remark}
Besides the monodromy zeta function, this resolution could also be used to compute other invariants associated with the curve singularity $Y \subset S$, such as the mixed Hodge structure on the cohomology of the Milnor fiber.	
\end{remark}


\section{The monodromy zeta function of a space monomial curve} \label{ZetaFunction}

Using the embedded $\Q$-resolution $\varphi: \hat{S} \rightarrow S$ of a space monomial curve $Y$ seen as a Cartier divisor on a generic surface $S$ constructed in the previous section, we will now compute the monodromy zeta function of $Y$. More precisely, we will compute the zeta function of monodromy $Z_{Y,0}^{mon}(t)$ of $Y \subset S$ at the origin with the A'Campo formula from Theorem~\ref{thm:ACampo-QEmb} in terms of $\varphi$. To this end, we still need to stratify the exceptional divisor such that the multiplicity defined in~\eqref{eq:def-mult} is constant along each stratum, and compute the Euler characteristic of these strata.

\vspace{14pt}

With Figure~\ref{fig:final-resolution}, we define a stratification of the exceptional divisor as follows. The first set of strata are the points of the intersection $\E_1 \cap H_0$, which we will all denote by $Q_0$; there are \[\frac{\lbeta_0}{\lcm(n_1,n_2, \ldots, n_g)}\] such points, see~\eqref{eq:points-E1-H0}. From~\eqref{eq:points-Q0}, we know that the local equation of $\E_0$ at each $Q_0$ is given by $x_1^n:X (\gcd(\frac{n}{n_1},\frac{n}{n_2},\ldots,\frac{n}{n_g});\frac{n}{n_0},-1) \rightarrow \C$. Hence, the multiplicity $m(\E_0,Q_0)$ is equal to \[m(\E_0,Q_0) = \frac{n}{\gcd(\frac{n}{n_1},\frac{n}{n_2},\ldots,\frac{n}{n_g})} = \lcm(n_1,\ldots, n_g).\] Analogously, each point in an intersection $\E_k \cap H_k$ for $k = 1,\ldots, g$ will be a stratum denoted by $Q_k$, the total number of such $Q_k$ is \[\frac{\lbeta_k}{\lcm(\frac{\lbeta_k}{e_k},n_{k+1}, \ldots, n_g)},\] and the multiplicity at each such point is $m(\E_k,Q_k)= \lcm(\frac{\lbeta_k}{e_k},n_{k+1},\ldots,n_g).$

\begin{remark} \label{rmk:order-small-groups-g=2}
	For $g = 2$, the resolution was already illustrated in Figure~\ref{fig:second-step-g2}, together with the order of the underlying small group at the points $Q_0$, $Q_1$ and $Q_2$. This provides another way of computing the multiplicity at these points. For example, at $Q_0$, we know that $\E_0$ is given by $x_1^n:X (\gcd(\frac{n}{n_1},\frac{n}{n_2});\frac{n}{n_0},-1) \rightarrow \C$. Using the morphism $[(x_0,x_1)] \mapsto [(x_0,x_1^{n_{02}n_{12}})]$, the space $X (\gcd(\frac{n}{n_1},\frac{n}{n_2});\frac{n}{n_0},-1)$ can be normalized into $X (\frac{n_0}{n_{02}};\frac{n_1n_2}{n_{02}n_{12}},-1)$ on which $\E_0$ is locally given by the function $x_1^{\frac{n}{n_{02}n_{12}}}$. This yields the same multiplicity. In general, one could also first normalize the space around the points to compute the multiplicity. 
\end{remark}

Another set of strata are the intersection points $\E_k\cap \E_{k+1}$ for $k = 1,\ldots, g-1$, denoted by $Q_{k(k+1)}$. For every $k = 1,\ldots,g-1$, the number of points $Q_{k(k+1)}$ is equal to the number of irreducible components of $\E_k$, see Theorem~\ref{thm:resolutionY}, and the multiplicity at these points can be computed from the results in the previous section: for example, if $g \geq 3$ and $k = 1$, it can be computed from~\eqref{eq:Q12} with the more general definition of multiplicity introduced in~\cite{Ma2}. As these strata will not contribute to the zeta function of monodromy, see Theorem~\ref{thm:ACampo-QEmb}, we will not go into more detail. Similarly, the intersection point $\E_g \cap \hat{Y}$ is a stratum that we do not have to consider. The last set of strata are the parts of the irreducible components $\E_{kj}$ for $j=1,\ldots, r_k$ of $\E_k$ for each $k = 1,\ldots, g$ that are not yet contained in the previous strata. Because all $\E_{kj}$ for fixed $k$ have the same behavior, we will consider them at once; we introduce \[\check{\E}_k := \left\{\begin{array}{ll}
		\E_1 \setminus ((\E_1 \cap H_0) \cup (\E_1\cap H_1) \cup (\E_1\cap \E_2)) & \text{for } k = 1 \\
		\E_k \setminus ((\E_k \cap H_k) \cup (\E_k\cap \E_{k-1}) \cup (\E_k \cap \E_{k+1})) & \text{for } k = 2,\ldots, g-1 \\
		\E_g \setminus ((\E_g \cap H_g) \cup (\E_g\cap \E_{g-1}) \cup (\E_g \cap \hat{Y})) & \text{for } k = g. \\
	\end{array}\right.\]
The multiplicity along each of these `strata' $\check{\E}_k$ is equal to the multiplicity of $\E_k$ given by $N_k = \lcm(\frac{\lbeta_k}{e_k},n_k,\ldots, n_g)$. It remains to compute their Euler characteristics.

\vspace{14pt}

The Euler characteristic of $\check{\E}_g$ is easy to compute: as $\E_g \simeq \P^1$, we find $\chi(\check{\E}_g) = -1$. The other Euler characteristics can be computed from the following proposition, in which we work in the same situation as Proposition~\ref{prop:intersection-with-previous-divisor}. Because of the symmetry in the variables $x_2,\ldots, x_g$, the result is written in such a way that it is independent of the choice of chart in the proof, cf. Proposition~\ref{prop:number-of-comp} and, in particular, Remark~\ref{rmk:symm-formula}.  

\begin{prop}\label{prop:euler-char}
Consider the quotient $\P^r_{(p_0, \ldots, p_r)}(d;a_0, \ldots, a_r)$ of some weighted projective space $\P^r_{(p_0, \ldots, p_r)}$ under an action of type $(d;a_0, \ldots, a_r)$ with $r\geq 2$. Let $\E$ be defined in this space by a system of equations
	\[\left \{ \begin{array}{clccccl}
 		x_0^{m_0} & + & x_1^{m_1}  &+& x_2^{m_2} &=& 0 \\
  		&  & x_2^{m_2}  &+& x_3^{m_3} &=& 0  \\
 		& & & \vdots & & & \\
 		& & x_{r-1}^{m_{r-1}}   &+& x_r^{m_r} &=& 0
	\end{array}\right.\]
for positive integers $m_i$ such that $d \mid a_im_i$ for $i = 0 ,\ldots, r$ and such that each equation is weighted homogeneous with respect to the weights $(p_0, \ldots, p_r)$. Assume that the intersection of $\E$ with $\{x_i = 0\}$ for $i = 2,\ldots, r$ only consists of one fixed point $A$, and that $a_ip_j - a_jp_i = 0$ for all $i,j \in \{1, \ldots, r\}$. Then, $\chi\Big(\E \setminus \bigcup_{i=0}^r\{x_i = 0\}\Big)$ is given by \[-\frac{m_1\cdots m_r \cdot \gcd\big(dP\cdot(p_0,\ldots, p_r),(p_0Q - a_0P)\cdot(p_1,\ldots, p_r)\big)}{dp_0P},\] where $P := \prod_{i=1}^rp_i$ and $Q := a_i \prod_{j=1,j\neq i}^rp_j$ for $i = 1,\ldots, r$.
\end{prop}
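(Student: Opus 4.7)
The plan is to decompose $\E$ using the proof of Proposition~\ref{prop:number-of-comp}, compute the Euler characteristic of each irreducible component after removing its intersections with the coordinate hyperplanes, and then sum. Working on the chart $x_2 \neq 0$, recall that $\E = \{A\} \sqcup \bigsqcup_b \E_b$, where the index $b = [(b_3,\ldots,b_r)]$ runs over the solutions in $X(p_2;p_3,\ldots,p_r)$ of $1 + x_3^{m_3} = \cdots = x_{r-1}^{m_{r-1}} + x_r^{m_r} = 0$, and $\E_b = \{[(x_0,x_1,b_3,\ldots,b_r)] : x_0^{m_0} + x_1^{m_1} + 1 = 0\}$. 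Since $A \in \{x_i = 0\}$ for every $i \geq 2$ while every $b_i$ is nonzero, one obtains
\[\chi\Big(\E \setminus \bigcup_{i=0}^r \{x_i = 0\}\Big) = \sum_b \chi\big(\E_b \setminus (\{x_0 = 0\} \cup \{x_1 = 0\})\big).\]

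Next I would compute each summand as the Euler characteristic of a finite quotient. The stronger hypothesis $a_i p_j - a_j p_i = 0$ for $i,j \in \{1,\ldots,r\}$ reduces the chart to
\[X\!\left(\begin{smallmatrix} p_2 & p_0 & p_1 & p_3 & \cdots & p_r \\ dp_2 & A_0 & 0 & 0 & \cdots & 0 \end{smallmatrix}\right), \qquad A_0 := p_2 a_0 - p_0 a_2,\]
and the stabilizer of $(b_3,\ldots,b_r)$ in $\mu_{p_2} \times \mu_{dp_2}$ is $H_b := \mu_{(p_2,\ldots,p_r)} \times \mu_{dp_2}$. Consequently, $\E_b \setminus (\{x_0=0\} \cup \{x_1=0\})$ coincides with the set-theoretic quotient of the affine Fermat curve $C_0 := \{(x_0,x_1) \in (\C^*)^2 : x_0^{m_0} + x_1^{m_1} + 1 = 0\}$ by the $H_b$-action $(\xi,\eta) \cdot (x_0,x_1) = (\xi^{p_0}\eta^{A_0} x_0, \xi^{p_1} x_1)$. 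Projecting onto the first coordinate realizes $C_0$ as an unramified degree-$m_1$ cover of $\C^* \setminus \{x_0 : x_0^{m_0} = -1\}$, which yields $\chi(C_0) = -m_0 m_1$ by Riemann--Hurwitz. Because the stabilizer $K := \{(\xi,\eta) \in H_b : \xi^{p_1} = 1 = \xi^{p_0}\eta^{A_0}\}$ of any point of $C_0$ is independent of the point, the quotient $H_b/K$ acts freely on $C_0$, so $\chi\bigl(\E_b \setminus (\{x_0 = 0\} \cup \{x_1 = 0\})\bigr) = -m_0 m_1 \cdot |K|/|H_b|$.

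The hard part is to match the resulting combinatorial expression with the symmetric gcd in the statement. A direct count of pairs $(\xi,\eta)$ satisfying the two defining relations of $K$ yields
\[|K| = \gcd\bigl(qA_0,\, dp_2 \cdot (p_0,p_1,\ldots,p_r)\bigr), \qquad q := (p_1,\ldots,p_r).\]
Multiplying by the component count $\frac{m_3 \cdots m_r \,(p_2,\ldots,p_r)}{p_2}$ from Proposition~\ref{prop:number-of-comp} and using the weighted homogeneity identity $p_i m_i = p_j m_j$ (in particular $m_0 p_0 = m_2 p_2$), the sum reduces to $-\frac{m_1 \cdots m_r}{dp_0 p_2}\,|K|$. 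Finally, since $a_i/p_i$ is constant for $i \geq 1$ by hypothesis, one computes $p_0 Q - a_0 P = -A_0 P/p_2$; factoring the common divisor $P/p_2$ out of both arguments of the target gcd then shows
\[\gcd\bigl(dP \cdot (p_0,\ldots,p_r),\,(p_0 Q - a_0 P)(p_1,\ldots,p_r)\bigr) = \frac{P}{p_2}\,|K|,\]
and substituting this back yields the stated formula.
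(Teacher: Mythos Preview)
Your proof is correct and arrives at the same chart-dependent formula as the paper (the expression $-\frac{m_1\cdots m_r}{dp_0p_2}\gcd(qA_0,\,dp_2\cdot(p_0,\ldots,p_r))$ is exactly the paper's intermediate formula \eqref{eq:euler-char}, up to the harmless sign $A_0=-(a_2p_0-a_0p_2)$). The route, however, differs from the paper's. The paper factors the computation through two auxiliary results: Lemma~\ref{lemma:euler-char-plane-curve}, which handles the case $r=2$ by covering the plane curve $C\subset\P^2_{(p_0,p_1,p_2)}(d;a_0,a_1,a_2)$ with the smooth degree-$K$ Fermat curve $\tilde C\subset\P^2$; and Lemma~\ref{lemma:degree}, which gives the degree of the projection $\E\setminus\bigcup\{x_i=0\}\to C\setminus\bigcup\{x_i=0\}$ for $r\geq 3$. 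You instead stay entirely in the affine chart $x_2\neq 0$: you use the component decomposition from Proposition~\ref{prop:number-of-comp}, realise each $\E_b\setminus(\{x_0=0\}\cup\{x_1=0\})$ directly as the quotient of the affine Fermat piece $C_0\subset(\C^*)^2$ by the group $H_b/K$ acting freely, and compute $\chi(C_0)=-m_0m_1$ via an unramified cover of a punctured $\C^*$. Your approach is more self-contained and avoids introducing the intermediate plane curve $C$ altogether; the paper's approach has the advantage of isolating the plane-curve and projection-degree computations as reusable lemmas, and of making the symmetry in $x_2,\ldots,x_r$ explicit before specialising to a chart. The final gcd manipulation is essentially the same in both.
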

 
To prove this result, we will reduce the problem of computing this Euler characteristic to computing the less complicated Euler characteristic considered in the next lemma.

\begin{lemma}\label{lemma:euler-char-plane-curve} Let $C$ in $\P^2_{(p_0,p_1,p_2)}(d;a_0,a_1,a_2)$ be defined by a single equation of the form $x_0^{m_0} + x_1^{m_1} + x_2^{m_2} = 0$ which is weighted homogeneous with respect to the weights $(p_0,p_1,p_2)$. Put $K = p_0m_0 = p_1m_1 = p_2m_2$, and let $M_i$ for $i = 0,1,2$ be the $2\times 2$-minor of
	\[\left(\begin{matrix}
 		p_0 & p_1 & p_2 \\
 		a_0 & a_1 & a_2 \\
	\end{matrix}\right)\]
where the column of $p_i$ is removed. Then, we have \[\chi\Big(C \setminus \bigcup_{i=0}^2\{x_i = 0\}\Big) = - \frac{K^2\cdot\gcd\big(d\cdot(p_0,p_1,p_2),M_0,M_1,M_2\big)}{dp_0p_1p_2}.\] 
\end{lemma}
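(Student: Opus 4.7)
The plan is to realize $C^* := C \setminus \bigcup_{i=0}^{2}\{x_i=0\}$ as a finite \'etale cover of $B := \C \setminus \{0,1\}$ and to compute its degree by a purely group-theoretic counting argument. I lift first to the affine setting: put $W := \{(x_0,x_1,x_2) \in (\C^*)^3 : x_0^{m_0}+x_1^{m_1}+x_2^{m_2}=0\}$, a smooth affine surface, and let $G := \C^* \times \mu_d$ act on it with $\C^*$ of weights $(p_0,p_1,p_2)$ and $\mu_d$ of weights $(a_0,a_1,a_2)$, so that by construction $C^* = W/G$. The weighted homogeneity $p_0 m_0 = p_1 m_1 = p_2 m_2 = K$ together with $d \mid a_i m_i$ makes both $s := -x_0^{m_0}/x_2^{m_2}$ and $t := -x_1^{m_1}/x_2^{m_2}$ into $G$-invariant functions satisfying $s+t=1$ and $s,t \neq 0$, so $s$ descends to a morphism $\phi : C^* \to B$.

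Next, I argue that $\phi$ is an unramified finite covering of constant degree $|F|$. Upstairs, $W \to B$ is a submersion with smooth one-dimensional fibers $F_{s_0}$, and $G$ acts fiberwise; the fibers of $\phi$ are therefore the zero-dimensional sets $F_{s_0}/G$, and I will check that their cardinality does not depend on $s_0$. This yields $\chi(C^*) = \chi(B) \cdot |F| = -|F|$, so the proof reduces to computing $|F|$. To this end I introduce the slice $Z_0 := F_{s_0} \cap \{x_2=1\} = \{(x_0,x_1) : x_0^{m_0}=-s_0,\ x_1^{m_1}=-(1-s_0)\}$, which has cardinality $m_0 m_1 = K^2/(p_0 p_1)$.

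The subgroup of $G$ preserving the slice $\{x_2=1\}$ is $H := \{(\lambda,\eta) \in \C^*\times \mu_d : \lambda^{p_2}\eta^{a_2}=1\}$, of order $dp_2$. Since the $\C^*$-action makes $x_2$ surject onto $\C^*$, the slice $Z_0$ meets every $G$-orbit in $F_{s_0}$, giving $|F_{s_0}/G| = |Z_0/H|$. The stabilizer in $H$ of any $(x_0,x_1) \in Z_0$ is the subgroup $\Sigma := \{(\lambda,\eta)\in\C^*\times\mu_d : \lambda^{p_i}\eta^{a_i}=1,\ i=0,1,2\}$, which is visibly independent of the chosen point, so the orbit--stabilizer theorem gives $|Z_0/H| = |Z_0|\cdot|\Sigma|/|H|$.

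The core arithmetic step is to evaluate $|\Sigma|$. The constraints $\lambda^{p_i} \in \mu_d$ force $\lambda \in \mu_{dp}$ with $p := \gcd(p_0,p_1,p_2)$; writing $\lambda=e^{2\pi i k/(dp)}$ and $\eta=e^{2\pi i j/d}$ converts the three defining relations into a system of congruences modulo $d$ encoded by the $3\times 2$ integer matrix with rows $(p_i/p,\, a_i)$, whose $2\times 2$ minors are precisely $M_l/p$ (note $p \mid p_i$, hence $p \mid M_l$). A Smith normal form computation, using that $\gcd(p_0/p,p_1/p,p_2/p)=1$, yields $|\Sigma| = \gcd(dp,M_0,M_1,M_2)$. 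Substituting into $|F| = m_0 m_1 |\Sigma|/(dp_2)$ and using $\chi(C^*)=-|F|$ produces the claimed formula. The main obstacle is precisely this stabilizer count: the fibration reduction and orbit--stabilizer bookkeeping are essentially formal, but controlling the arithmetic of the three minors $M_l$ through both the $\C^*$- and the $\mu_d$-factors of $G$ is the point where the structural hypothesis $a_i p_j - a_j p_i$ and the common value $K$ must interact correctly.
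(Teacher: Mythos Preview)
Your argument is correct and takes a genuinely different route from the paper. The paper constructs the covering
\[
h:\P^2\setminus\textstyle\bigcup_i\{x_i=0\}\longrightarrow \P^2_{(p_0,p_1,p_2)}(d;a_0,a_1,a_2)\setminus\textstyle\bigcup_i\{x_i=0\},\qquad [x_0:x_1:x_2]\mapsto[x_0^{p_0}:x_1^{p_1}:x_2^{p_2}],
\]
pulls $C$ back to the Fermat curve $\tilde C=\{x_0^K+x_1^K+x_2^K=0\}\subset\P^2$, reads off $\chi(\tilde C\setminus\bigcup_i\{x_i=0\})=-K^2$ from the degree--genus formula, and then divides by the degree of $h$ (computed via a cited lemma on orders of abelian quotient groups). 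Your approach instead goes \emph{downwards}: you fiber $C^*$ over $B=\C\setminus\{0,1\}$ via the invariant cross-ratio $s=-x_0^{m_0}/x_2^{m_2}$, so that $\chi(C^*)=\chi(B)\cdot|F|=-|F|$, and you evaluate $|F|$ by slicing the affine cone at $x_2=1$ and applying orbit--stabilizer for the isotropy group $H$ of the slice. The core arithmetic ends up being the same---your stabilizer count $|\Sigma|=\gcd(dp,M_0,M_1,M_2)$ is exactly what the paper encodes in the degree $D=dp_0p_1p_2/\gcd(dp,M_0,M_1,M_2)$---but your packaging is more self-contained: you avoid both the Fermat detour and the external citation for the covering degree, trading them for a short Smith-normal-form computation. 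The paper's route has the advantage of fitting into the same covering-map template used in the surrounding lemmas (Lemma~\ref{lemma:degree} and Proposition~\ref{prop:euler-char}), which keeps the section methodologically uniform; yours is more elementary and would work equally well.
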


\begin{proof}
We will once more simplify the problem of computing this Euler characteristic by looking at an easier Euler characteristic. More precisely, we consider the curve $\tilde C$ in $\P^2$ defined by $x_0^K + x_1^K + x_2^K = 0$. As this is a smooth curve of degree $K$, we know its genus \[g(\tilde C) = \frac{(K-1)(K-2)}{2},\] and, hence, its Euler characteristic $\chi(\tilde C) = 2 - 2g(\tilde C) = -K^2 + 3K$. Since each intersection $\tilde C \cap \{x_i = 0\}$ for $i=0,1,2$ consists of $K$ points, we find that ${\chi(\tilde C \setminus \bigcup_{i=0}^2\{x_i = 0\})} = -K^2.$ From this result, we can deduce $\chi(C \setminus \bigcup_{i=0}^2\{x_i = 0\})$ by considering the well-defined surjective morphism \[h: \P^2 \setminus \bigcup_{i=0}^2\{x_i=0\} \longrightarrow \P^2_{(p_0,p_1,p_2)}(d;a_0,a_1,a_2) \setminus \bigcup_{i=0}^2\{x_i=0\} : [x_0:x_1:x_2] \mapsto [x_0^{p_0}:x_1^{p_1}:x_2^{p_2}],\] under which $h^{-1}\big(C \setminus \bigcup_{i=0}^2\{x_i = 0\}\big) = \tilde C \setminus \bigcup_{i=0}^2\{x_i = 0\}$. We claim that $h$ is a covering map of degree \[D = \frac{dp_0p_1p_2}{\gcd(d\cdot \gcd(p_0,p_1,p_2),M_0,M_1,M_2)}.\] Then, indeed, \[\chi\Big(C \setminus \bigcup_{i=0}^2\{x_i = 0\}\Big) = \frac{\chi\big(\tilde C \setminus \bigcup_{i=0}^2\{x_i = 0\}\big)}{D} = - \frac{K^2\cdot\gcd\big(d\cdot\gcd(p_0,p_1,p_2),M_0,M_1,M_2\big)}{dp_0p_1p_2}.\] First, to show that $h$ is a covering map, one can see that it is enough to show that $h$ is a local homeomorphism. To prove the latter, we can work locally around a point $x \in \P^2 \setminus \bigcup_{i=0}^2\{x_i=0\}$ by considering the chart where $x_0 \neq 0$: \[h_0: \C^2 \setminus \bigcup_{i=1}^2\{x_i = 0\} \longrightarrow X \left(\begin{array}{c|cc} p_0 & p_1 & p_2 \\ dp_0 & M_2 & M_1\end{array} \right) \setminus \bigcup_{i=1}^2\{x_i = 0\}: (x_1,x_2) \mapsto [(x_1^{p_1},x_2^{p_2})].\] Because $X \left(\begin{smallmatrix} p_0 \\ dp_0\end{smallmatrix} \middle| \begin{smallmatrix}  p_1 & p_2 \\  M_2 & M_1 \end{smallmatrix} \right)\setminus \bigcup_{i=1}^2\{x_i = 0\}$ is smooth at $h_0(x)$, we can further reduce to showing that \[\Big(\C^2\setminus \bigcup_{i=1}^2\{x_i = 0\}, x\Big) \longrightarrow \Big(\C^2 \setminus \bigcup_{i=1}^2\{x_i = 0\}, h_0(x)\Big): (x_1,x_2) \mapsto (x_1^{p_1},x_2^{p_2})\] is a local homeomorphism, which is clearly true. Second, to find the degree of $h$, we can still work with $h_0$ on the chart where $x_0 \neq 0$. Because the morphism $h_0$ can be decomposed into the morphism $\sigma: \C^2 \rightarrow \C^2$ defined by $(x_1,x_2) \mapsto (x_1^{p_1},x_2^{p_2})$ and the projection $\text{pr}: \C^2 \rightarrow X \left(\begin{smallmatrix} p_0 \\ dp_0\end{smallmatrix} \middle| \begin{smallmatrix}  p_1 & p_2 \\  M_2 & M_1 \end{smallmatrix} \right),$ its degree is equal the product of the degrees of $\sigma$ and $\text{pr}$. Clearly, the morphism $\sigma$ has degree $p_1p_2$. For the degree of $\text{pr}$, the result in~\cite[Lemma 5.1]{AMO2} tells us that this is equal to \[\frac{dp_0}{\gcd(d\cdot \gcd(p_0,p_1,p_2),M_0,M_1,M_2)}.\] Together, these degrees yield the correct expression for the degree $D$.
\end{proof}

In the proof of Proposition~\ref{prop:euler-char}, we will work similarly as in the proof of Lemma~\ref{lemma:euler-char-plane-curve}: we will construct a covering from which the Euler characteristic of $\E \setminus \bigcup_{i=0}^r \{x_i = 0\}$ can be easily computed. To find the degree of this covering, we will use the following lemma.

\begin{lemma}\label{lemma:degree}  
Consider a cyclic quotient space $X$ of the form $X(\frac{K}{k};\frac{K}{k_0},\ldots, \frac{K}{k_r})$ where $r \geq 2$ and $k, k_0,\ldots, k_r \mid K$. Let $\E$ in $X$ be defined by
	\[\left\{\begin{array}{rcc}
		x_0^{k_0} + x_1^{k_1} & = & c_1 \\
		x_2^{k_2} & = & c_2 \\
		& \vdots & \\
		x_r^{k_r} & = &c_r
	\end{array}\right.\]
for some constants $c_i \in \C \setminus \{0\}$, and denote by $N$ its number of irreducible components. Consider also the cyclic quotient space $\tilde X = X(\frac{K}{k};\frac{K}{k_0},\frac{K}{k_1})$ and $\tilde \E $ in $\tilde X$ defined by the single equation $x_0^{k_0} + x_1^{k_1} = c_1$. The degree of the projection $\text{pr}: \E \setminus \bigcup_{i=0}^r\{x_i = 0\} \rightarrow {\tilde \E \setminus \bigcup_{i=0}^1\{x_i = 0\}}$ given by $[(x_0,\ldots,x_r)]\mapsto [(x_0,x_1)]$ is \[\frac{KN\cdot\gcd(\frac{K}{k},\frac{K}{k_0},\ldots, \frac{K}{k_r})}{k\cdot \gcd(\frac{K}{k},\frac{K}{k_0},\frac{K}{k_1}) \cdot \gcd(\frac{K}{k},\frac{K}{k_2},\ldots,\frac{K}{k_r})}.\]
\end{lemma}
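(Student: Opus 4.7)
The plan is to compute $\deg(\operatorname{pr})$ as the cardinality of a generic fiber and then rewrite the result in terms of $N$. Set $G := \mu_{K/k}$, acting diagonally as in~\eqref{eq:action} so that $X = \C^{r+1}/G$ and $\tilde{X} = \C^2/G$ with the first two weights on $X$ restricting to the weights on $\tilde{X}$. Fix a generic $[(a_0, a_1)] \in \tilde{\E} \setminus \bigcup_{i=0}^{1}\{x_i = 0\}$. Then $[(x_0, \ldots, x_r)] \in \E$ lies in the fiber of $\operatorname{pr}$ over $[(a_0, a_1)]$ if and only if $(x_0, x_1) \in G \cdot (a_0, a_1)$ and $x_i^{k_i} = c_i$ for $i = 2, \ldots, r$; note that $G$ preserves these equations since $\xi^{K} = 1$ for every $\xi \in G$.

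Lifting to $\C^{r+1}$, the preimage of this fiber has cardinality $|G \cdot (a_0, a_1)| \cdot \prod_{i=2}^{r} k_i$. Since $a_0 a_1 \neq 0$, the orbit-stabilizer theorem gives $|G \cdot (a_0, a_1)| = (K/k) / \gcd(K/k, K/k_0, K/k_1)$. Because none of the $c_i$ vanish, every lift has all coordinates non-zero, so the $G$-stabilizer at each such point equals $\mu_{\gcd(K/k,\, K/k_0,\, \ldots,\, K/k_r)}$, and every $G$-orbit on this preimage has the common size $(K/k) / \gcd(K/k, K/k_0, \ldots, K/k_r)$. Dividing yields
\[
\deg(\operatorname{pr}) \;=\; \frac{\prod_{i=2}^{r} k_i \cdot \gcd(K/k,\, K/k_0,\, \ldots,\, K/k_r)}{\gcd(K/k,\, K/k_0,\, K/k_1)}.
\]

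To match the stated formula, I would eliminate $\prod_{i=2}^{r} k_i$ in favor of $N$. Each choice of $k_i$-th roots $\alpha_i$ of $c_i$ for $i \geq 2$ produces a level-set component $\{x_0^{k_0} + x_1^{k_1} = c_1\} \times \{(\alpha_2, \ldots, \alpha_r)\}$ of $\E$ in $\C^{r+1}$; this is irreducible because the plane curve $\{x_0^{k_0} + x_1^{k_1} = c_1\}$ in $\C^2$ is irreducible. Two such components are identified in $X$ precisely when their tuples $(\alpha_2, \ldots, \alpha_r)$ lie in the same $G$-orbit under $\xi \cdot \alpha = (\xi^{K/k_2} \alpha_2, \ldots, \xi^{K/k_r} \alpha_r)$, so $N$ equals the number of such orbits. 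Another orbit-stabilizer count with stabilizer $\mu_{\gcd(K/k,\, K/k_2,\, \ldots,\, K/k_r)}$ gives
\[
N \;=\; \frac{k \cdot \prod_{i=2}^{r} k_i \cdot \gcd(K/k,\, K/k_2,\, \ldots,\, K/k_r)}{K}.
\]
Substituting $\prod_{i=2}^{r} k_i = KN / \bigl(k \cdot \gcd(K/k, K/k_2, \ldots, K/k_r)\bigr)$ into the expression for $\deg(\operatorname{pr})$ produces the claimed formula.

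The principal subtlety is justifying that the $G$-orbits on the set of level-set components of $\E$ in $\C^{r+1}$ correspond bijectively to the irreducible components of $\E$ in $X$, i.e.\ that no level-set component is further split or fused under the quotient. This reduces to the fact that the stabilizer of any point on a level-set component acts trivially on the whole component (because the action on the $\alpha_i$ for $i \geq 2$ is determined by $\xi$, and $\xi$ in the pointwise stabilizer fixes all $\alpha_i$). With that in hand, the rest is elementary bookkeeping of orbit-stabilizer counts and $\gcd$ identities.
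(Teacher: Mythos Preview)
Your proof is correct and follows essentially the same approach as the paper: both compute the degree by counting a generic fiber via orbit--stabilizer arguments on the cyclic group $\mu_{K/k}$. The only difference is organizational---the paper factors the count as $N$ times a per-component fiber size (computed through an auxiliary group homomorphism as in Lemma~\ref{lemma:number-of-solutions}), whereas you lift the whole fiber to $\C^{r+1}$, divide by the orbit size, and then substitute for $\prod_{i\geq 2} k_i$ via a separate orbit count for $N$; your two symmetric orbit--stabilizer steps replace the paper's single homomorphism argument, and the paper additionally records that $\mathrm{pr}$ is a local homeomorphism (hence a covering), which you leave implicit.
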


\begin{proof}
First of all, the projection $pr$ is a covering map: as in the proof of Lemma~\ref{lemma:euler-char-plane-curve}, it suffices to see that $pr$ is a local homeomorphism around every point $x \in \E \setminus \bigcup_{i=0}^r\{x_i = 0\}$. In this case, because $X$ and $\tilde X$ are smooth around $x$ and $pr(x)$, respectively, the problem is equivalent to showing that the projection
\begin{align*}
	\Big(\E \setminus \bigcup_{i=0}^r\{x_i = 0\}, x\Big) \subset & ~\Big(\C^{r+1} \setminus \bigcup_{i=0}^r\{x_i = 0\}, x\Big) \\
	 \longrightarrow & ~\Big(\tilde \E \setminus \bigcup_{i=0}^1\{x_i = 0\}, pr(x)\Big)  \subset \Big(\C^2 \setminus \bigcup_{i=0}^1\{x_i = 0\}, pr(x)\Big)
\end{align*}
is a local homeomorphism, which is again easy to see. To compute the degree of $pr$, we count the number of elements in $\text{pr}^{-1}([(a_0,a_1)])$ of a point $[(a_0,a_1)] \in \tilde \E \setminus \bigcup_{i=0}^1\{x_i = 0\}$. These elements are of the form $[(\xi^{\frac{K}{k_0}} a_0, \xi^{\frac{K}{k_1}} a_1, b_2,\ldots,b_r)]$ for some $\xi \in \mu_{\frac{K}{k}}$ and ${b_i \in \C}$ for $i = 2,\ldots, r$ satisfying $b_i^{k_i} = c_i$. Note that the irreducible components of $\E$ are pairwise disjoint and given by $\{[(x_0,x_1,b_2,\ldots, b_r)]\mid x_0^{k_0} + x_1^{k_1} = c_1\} $ for some fixed solution $[(b_2, \ldots, b_r)]$ of $x_2^{k_2} - c_2 = \cdots = x_r^{k_r} - c_r = 0$ in $X(\frac{K}{k};\frac{K}{k_2},\ldots, \frac{K}{k_r})$. It follows that the degree is equal to the product of the number $N$ of irreducible components and the number of points $[(\xi^{\frac{K}{k_0}} a_0, \xi^{\frac{K}{k_1}} a_1, b_2,\ldots,b_r)]$ for some $\xi \in \mu_{\frac{K}{k}}$ and fixed $[(b_2,\ldots, b_r)] \in X(\frac{K}{k};\frac{K}{k_2},\ldots, \frac{K}{k_r})$. Working analogously as in the proof of Lemma~\ref{lemma:number-of-solutions}, the latter number is equal to \[\frac{\Big\vert \Big\{\big(\xi^{\frac{K}{k_0}},\xi^{\frac{K}{k_1}}\big) ~\big\vert~ \xi \in \mu_{\frac{K}{k}}\Big\} \Big\vert}{\vert \text{Im}~ h \vert},\] where $h$ is the group homomorphism $h: \mu_{\gcd(\frac{K}{k},\frac{K}{k_2}, \ldots, \frac{K}{k_r})} \longrightarrow \big\{(\xi^{\frac{K}{k_0}},\xi^{\frac{K}{k_1}}) \mid \xi \in \mu_{\frac{K}{k}}\big\}$ given by ${\eta \mapsto (\eta^{\frac{K}{k_0}},\eta^{\frac{K}{k_1}})}$ with kernel $\mu_{\gcd(\frac{K}{k},\frac{K}{k_0}, \ldots, \frac{K}{k_r})}$. Finally, an easy computation gives that \[\big\vert \big\{(\xi^{\frac{K}{k_0}},\xi^{\frac{K}{k_1}}) \mid \xi \in \mu_{\frac{K}{k}}\big\}\big\vert = \frac{K}{k\cdot \gcd(\frac{K}{k},\frac{K}{k_0}, \frac{K}{k_1})},\] and we find the degree stated in the lemma.
\end{proof}

With these two preliminary results, we are now ready to prove Proposition~\ref{prop:euler-char}.

\begin{proof}[Proof of Proposition~\ref{prop:euler-char}]
For $r = 2$, the result follows from Lemma~\ref{lemma:euler-char-plane-curve} in which $M_0 = a_1p_2 - a_2p_1 = 0$. For $r \geq 3$, we work similarly as in the proof of Lemma~\ref{lemma:euler-char-plane-curve}: we will show that the well-defined surjective morphism \[ h: \E \setminus \bigcup_{i=0}^r\{x_i = 0\} \longrightarrow C \setminus \bigcup_{i=0}^2\{x_i = 0\}: [x_0:\ldots:x_r] \mapsto [x_0:x_1:x_2],\] where $C := \{x_0^{m_0} + x_1^{m_1} + x_2^{m_2} = 0\} \subset \P^2_{(p_0,p_1,p_2)}(d;a_0,a_1,a_2)$, is a $D$-sheeted covering with \[D = \frac{m_3\cdots m_r\cdot \gcd\big(dp_2\cdot(p_0,\ldots, p_r),(a_2p_0 - a_0p_2)\cdot(p_1,\ldots, p_r)\big)}{p_2\cdot\gcd\big(d\cdot(p_0,p_1,p_2),a_2p_0 - a_0p_2,a_1p_0 - a_0p_1\big)}.\] Together with Lemma~\ref{lemma:euler-char-plane-curve} applied to $C$ with $M_0 = 0$, we find that $\chi(\E \setminus \bigcup_{i=0}^r\{x_i = 0\})$ is given by
	\begin{equation}\label{eq:euler-char}
	 	- \frac{m_1\cdots m_r \cdot \gcd\big(dp_2\cdot(p_0,\ldots, p_r), (a_2p_0 - a_0p_2)\cdot(p_1,\ldots, p_r)\big)}{dp_0p_2}.
	\end{equation}
This can be rewritten as the formula in the statement. To show that $h$ is a covering map, it is once more enough to show that $h$ is a local homeomorphism. This time, we consider the chart where $x_2\neq 0$: this gives \[h_2: \E' \setminus \bigcup_{i=0,i\neq 2}^r\{x_i = 0\} \longrightarrow C' \setminus \bigcup_{i=0}^1\{x_i = 0\}:[(x_0,x_1,x_3,\ldots, x_r)] \mapsto [(x_0,x_1)],\] where $\E'$ is given by \[\left \{ \begin{array}{clccccl}
		x_0^{m_0} & + & x_1^{m_1}  &+& 1 &=& 0 \\
		&  & 1  &+& x_3^{m_3} &=& 0  \\
		& & & \vdots & & & \\
		& & x_{r-1}^{m_{r-1}}   &+& x_r^{m_r} &=& 0
		\end{array}\right.\]
in the embedding space  \[X\left(\begin{array}{c|ccccc} 
		p_2 & p_0 & p_1 & p_3 & \ldots & p_r \\
		dp_2 & -M_1 & 0 & 0 & \ldots & 0
	\end{array} \right),\] with $M_1 = a_2p_0 - a_0p_2$, and $C'$ by $\{x_0^{m_0} + x_1^{m_1}  + 1 = 0 \}$ in $X \left(\begin{smallmatrix} p_2 \\ dp_2\end{smallmatrix} \middle| \begin{smallmatrix}  p_0 & p_1 \\  -M_1 & 0 \end{smallmatrix} \right)$. Because the embedding spaces of $\E$ and $\E'$ are smooth outside their coordinate hyperplanes, one can conclude by working similarly as in Lemma~\ref{lemma:degree}. To prove the correct formula for the degree of $h$, we again consider the chart where $x_2\neq 0$. The morphism $h_2$ can be further simplified with an isomorphism 
	\[ X\left(\begin{array}{c|ccccc} 
  		p_2 & p_0 & p_1 & p_3 & \ldots & p_r \\
  		dp_2 & -M_1 & 0 & 0 & \ldots & 0
  	\end{array} \right)  \simeq X\left(p_2; \frac{dp_0p_2}{\gcd(dp_2,M_1)}, p_1, p_3, \ldots, p_r\right)\] 
as in~\eqref{eq:isom-to-one-line} under which $\E'$ is transformed into 
	\[\left \{\begin{array}{clccccl}
 		x_0^{\frac{m_0\gcd(dp_2,M_1)}{dp_2}} & + & x_1^{m_1}  &+& 1 &=& 0 \\
 		&  & 1  &+& x_3^{m_3} &=& 0  \\
 		& & & \vdots & & & \\
 		& & x_{r-1}^{m_{r-1}}   &+& x_r^{m_r} &=& 0.
 	\end{array}\right.\]
Using the corresponding isomorphism on the embedding space of $C'$ under which $C'$ is transformed in the same way as $\E'$, we arrive at the situation of Lemma~\ref{lemma:degree} with $K = m_ip_i$ for $i = 0,\ldots, r$ and $N = \frac{m_3\cdots m_r\gcd(p_2,\ldots, p_r)}{p_2}$ (see~\eqref{eq:number-of-comp}), which leads to the degree $D$.
\end{proof}

\begin{cor}
For $k = 1, \ldots, g$, the Euler characteristic of $\check{\E}_k$ is given by \[\chi(\check{\E}_k) = - \frac{n_k\lbeta_k}{\lcm(\frac{\lbeta_k}{e_k}, n_k,\ldots, n_g)}.\] 
\end{cor}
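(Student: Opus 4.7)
The plan is to reduce $\chi(\check{\E}_k)$ to the Euler characteristic of $\E_k$ minus all of its intersections with the coordinate hyperplanes of the ambient (quotient of a) weighted projective space of Step~$k$, and then invoke Proposition~\ref{prop:euler-char}. I would first note that the only subsequent modification of $\E_k$ (for $k<g$) in the resolution is the blow-up at $P_k$ in Step~$k+1$, which separates the $r_k$ local branches through $P_k$ and replaces that single point by the $r_k$ points of $\E_k \cap \E_{k+1}$; hence
\[
\chi(\E_k\text{ in the final resolution}) - \#(\E_k \cap \E_{k+1}) \;=\; \chi(\E_k\text{ after Step }k) - 1.
\]
Combined with the identifications $\E_k \cap \E_{k-1} = \E_k \cap \{x_0=0\}$ and $\E_k \cap H_k = \E_k \cap \{x_k=0\}$ for $k\geq 2$, the analogous identities $\E_1 \cap H_i = \E_1 \cap \{x_i=0\}$ for $i=0,1$, and $\{P_k\} = \E_k \cap \{x_i=0\}$ for every $i>k$, telescoping yields
\[
\chi(\check{\E}_k) \;=\; \chi\Bigl(\E_k \setminus \bigcup_i \{x_i=0\}\Bigr), \qquad k=1,\ldots,g-1,
\]
where the union runs over all coordinate hyperplanes of the ambient of $\E_k$.

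The case $k=g$ is treated directly: since $\E_g \simeq \P^1$ and $\check{\E}_g$ is its complement of three distinct points ($\E_g\cap\E_{g-1}$, $\E_g\cap H_g$, $\E_g\cap \hat Y$), one has $\chi(\check{\E}_g)=-1$; the claimed formula agrees because $e_g=1$ and $\gcd(\lbeta_g,n_g)=1$ from property~(iii) give $\lcm(\lbeta_g,n_g) = n_g\lbeta_g$. For $1\leq k \leq g-1$ I would apply Proposition~\ref{prop:euler-char}. When $k=1$ the ambient $\P^g_{\omega_1}$ has trivial action ($d=1$, $a_i=0$), so $Q=0$, the gcd factor collapses to $\gcd(p_0,\ldots,p_g) = n/\lcm(n_0,\ldots,n_g)$ via~\eqref{eq:rel-gcd-lcm}, and $\lbeta_1/e_1 = n_0$ gives $\chi(\check{\E}_1) = -n/\lcm(n_0,\ldots,n_g) = -n_1\lbeta_1/N_1$.

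For $k\geq 2$, I would set $r:=g-k+1$, $b:=b_k^{(k-1)} = n_k\lbeta_k - n_{k-1}\lbeta_{k-1}$, and $M:=\lcm(n_k,\ldots,n_g)$. Identifying the proposition's parameters as $d=e_{k-1}$, $p_0=1$, $p_j = b/n_{k-1+j}$, $a_0=-1$, $a_j = n_{k-1}\lbeta_{k-1}/n_{k-1+j}$, $m_0=b$, $m_j = n_{k-1+j}$, the divisibilities $e_{k-1}\mid b$, $e_{k-1}\mid n_k\lbeta_k$ (so $n_k\lbeta_k/e_{k-1} = \lbeta_k/e_k$), and $M\mid b$ collapse the auxiliary quantities to
\[
P = \frac{b^r}{e_{k-1}}, \quad Q = \frac{n_{k-1}\lbeta_{k-1}\,b^{r-1}}{e_{k-1}}, \quad p_0Q - a_0P = \frac{b^{r-1}\lbeta_k}{e_k}.
\]
Writing $b=Mb'$ and pulling out the common factor $M^{r-1}(b')^r$ from both entries of the outer gcd in Proposition~\ref{prop:euler-char} produces
\[
\chi(\check{\E}_k) = -\,\frac{e_{k-1}\gcd(M,\lbeta_k/e_k)}{M},
\]
and a final application of $\gcd\cdot\lcm=$ product together with $e_{k-1}/e_k=n_k$ rewrites this as $-n_k\lbeta_k/\lcm(\lbeta_k/e_k,n_k,\ldots,n_g) = -n_k\lbeta_k/N_k$. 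The main obstacle is not the topological reduction, which is straightforward bookkeeping with Theorem~\ref{thm:resolutionY}, but the algebraic simplification: converting the gcd in Proposition~\ref{prop:euler-char} into the claimed lcm hinges on exploiting every divisibility relation among $e_{k-1}$, $\lbeta_k$, $b_k^{(k-1)}$, and $\lcm(n_k,\ldots,n_g)$ supplied by properties~(i)--(iv) of Section~\ref{SpaceMonomial} and Lemma~\ref{lemma:pos-powers}.
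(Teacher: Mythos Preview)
Your proposal is correct and follows essentially the same strategy as the paper: reduce $\chi(\check{\E}_k)$ to the Euler characteristic of $\E_k$ minus all coordinate hyperplanes in the ambient space of Step~$k$, then apply Proposition~\ref{prop:euler-char}. The only noteworthy difference is that the paper observes directly that $\check{\E}_k$ is \emph{isomorphic} to $\E_k \setminus \bigcup_i\{x_i=0\}$ (since the subsequent blow-ups are isomorphisms away from $P_k$, which is removed in both descriptions), whereas you reach the same conclusion via an Euler-characteristic count; and the paper invokes the local formula~\eqref{eq:euler-char} rather than the symmetrized statement of Proposition~\ref{prop:euler-char}, leaving the $k\geq 2$ algebra implicit where you carry it out in full.
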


\begin{proof}
For $k = g$, we already know that $\chi(\check{\E}_g) = -1$. Because $\gcd(\lbeta_g,n_g) = e_g = 1$, this is the same as the expression in the statement. For $k = 1$, by construction of the resolution, $\check{\E}_1$ is isomorphic to $\E_1 \setminus \bigcup_{i=0}^g \{x_i = 0\}$ in $\P^g_{w_1}$ after the first blow-up. From~\eqref{eq:euler-char} in the proof of Proposition~\ref{prop:euler-char} applied to the equations~\eqref{eq:E1-homog}, we indeed find that \[\chi(\check{\E_1}) = -\frac{n_1\cdots n_g\gcd(\frac{n}{n_0},\ldots, \frac{n}{n_g})}{\frac{n}{n_0}} = - \frac{n_1\lbeta_1}{\lcm(\frac{\lbeta_1}{e_1}, n_1,\ldots, n_g)},\] where we used that $n = n_1\lbeta_1$ and the relation~\eqref{eq:rel-gcd-lcm}. If $g \geq 3$ and $k \in {\{2,\ldots, g-1\}}$, the Euler characteristic of $\check{\E}_k$ can be computed from~\eqref{eq:Ek-homog} in the same way. 
\end{proof}

We are finally ready to compute the zeta function of monodromy associated with a space monomial curve $Y \subset \C^{g+1}$.
 
\begin{theorem}\label{thm:zeta-function-mon-Y}
Let $Y \subset \C^{g+1}$ be a space monomial curve defined by the equations~\eqref{eq:equations-Y} with $g\geq 2$. Consider a generic embedding surface $S = S(\lambda_2,\ldots,\lambda_g) \subset \C^{g+1}$ given by~\eqref{eq:equations-S}, where $(\lambda_2,\ldots,\lambda_g)$ are chosen such that Section~\ref{RedCurveSurface} applies. Denote by $\sigma: X' \rightarrow \C^{g+1}$ the blow-up of $\C^{g+1}$ with center $Y$ and by $S'$ the strict transform of $S$ under $\sigma$. Then, the monodromy zeta function of $Y$ considered in $\C^{g+1}$ at the generic point $p = S' \cap \sigma^{-1}(0)$ is given by \[Z^{mon}_{Y,p}(t) = \frac{\prod\limits_{k = 0}^g(1-t^{M_k})^{\frac{\lbeta_k}{M_k}}}{\prod\limits_{k = 1}^g(1-t^{N_k})^{\frac{n_k\lbeta_k}{N_k}}},\] where $M_k := \lcm(\frac{\lbeta_k}{e_k},n_{k+1},\ldots, n_g)$ for $k = 0,\ldots, g$, and $N_k := \lcm(\frac{\lbeta_k}{e_k},n_k,\ldots, n_g)$ for $k = 1,\ldots, g$.
\end{theorem}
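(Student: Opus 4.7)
The plan is to combine previously established results in a fairly mechanical way. First, Theorem~\ref{thm:red-to-on-curve-surface} lets me replace $Z^{mon}_{Y,p}(t)$ by $Z^{mon}_{Y,0}(t)$, the monodromy zeta function of $Y$ viewed as a Cartier divisor on a generic embedding surface $S$ at the origin. Then I apply the A'Campo formula from Theorem~\ref{thm:ACampo-QEmb} to the embedded $\Q$-resolution $\varphi: \hat{S} \to S$ constructed in Theorem~\ref{thm:resolutionY}, using the stratification introduced and analyzed in the paragraphs preceding the theorem I am trying to prove.

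The formula requires knowing, for each stratum, its Euler characteristic and the multiplicity of the (unique) exceptional component it lies on. The strata split into four groups: $(i)$ the $\lbeta_0/M_0$ points of $\E_1 \cap H_0$, each carrying multiplicity $M_0 = \lcm(n_1,\ldots,n_g)$, as read off from the local model~\eqref{eq:points-Q0}; $(ii)$ for each $k = 1,\ldots,g$, the $\lbeta_k/M_k$ points of $\E_k \cap H_k$, each carrying multiplicity $M_k = \lcm(\tfrac{\lbeta_k}{e_k},n_{k+1},\ldots,n_g)$; $(iii)$ the intersection points $\E_k\cap\E_{k+1}$ and the point $\E_g \cap \hat{Y}$, which lie outside every $\E_k^\circ$ and therefore contribute trivially; and $(iv)$ the generic open parts $\check{\E}_k$ for $k=1,\ldots,g$, which carry multiplicity $N_k = \lcm(\tfrac{\lbeta_k}{e_k},n_k,\ldots,n_g)$ and whose Euler characteristic was just computed to be $-n_k\lbeta_k/N_k$ (via Proposition~\ref{prop:euler-char} applied to the homogeneous equations~\eqref{eq:E1-homog} and~\eqref{eq:Ek-homog}). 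All point strata contribute with Euler characteristic $1$.

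Assembling these contributions, the A'Campo formula immediately gives
\[
Z^{mon}_{Y,0}(t) \;=\; \prod_{k=0}^{g} (1-t^{M_k})^{\lbeta_k/M_k} \cdot \prod_{k=1}^{g} (1-t^{N_k})^{-n_k\lbeta_k/N_k},
\]
which is exactly the claimed expression after rewriting the second product as a denominator. One small verification that belongs in the write-up is that each exponent is indeed a non-negative integer (e.g.\ $M_k \mid \lbeta_k$), but this follows from the definition of $M_k$ and properties $(i)$--$(iv)$ of the $\lbeta_i$ and $n_i$ listed in Section~\ref{SpaceMonomial}. Applying Theorem~\ref{thm:red-to-on-curve-surface} then gives the statement for $Z^{mon}_{Y,p}(t)$.

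There is no real obstacle at this stage: the hard work is already done in Sections~\ref{RedCurveSurface} and~\ref{Resolution}. The only genuine care needed is in checking that the Euler characteristic computation of $\check{\E}_k$ is robust against the subtle point that the $H_i$ are \emph{not} components of $\varphi^{-1}(Y)$ (so they enter the stratification only through the ambient quotient structure, not through the definition of $\E_k^\circ$), and in verifying that the intersection strata of items $(iii)$ really do lie outside every $\E_k^\circ$. Both are immediate from the stratification description and Theorem~\ref{thm:resolutionY}.
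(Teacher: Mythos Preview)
Your proposal is correct and follows essentially the same approach as the paper: the paper's proof is a one-sentence assembly (``immediately follows from all the results in this section''), and you have written out exactly that assembly—reduce via Theorem~\ref{thm:red-to-on-curve-surface}, apply Theorem~\ref{thm:ACampo-QEmb} to the resolution of Theorem~\ref{thm:resolutionY}, and read off the contributions from the stratification data computed just before the statement. Your explicit remark that the $H_i$ are not components of $\varphi^{-1}(Y)$ (so the points $Q_k$ lie in $\E_k^\circ$ and contribute, while the crossing points of exceptional divisors do not) is exactly the bookkeeping the paper leaves implicit.
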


\begin{proof}
This immediately follows from all the results in this section: the strata $Q_k$ for $k = 0,\ldots, g$ yield the factors in the numerator, and the `strata' $\check{\E}_k$ for $k = 1,\ldots, g$ yield the factors in the denominator.
\end{proof}

We illustrate this theorem with two examples, in which we already see that every pole of the motivic Igusa zeta function induces an eigenvalue of monodromy. In the next section, we will prove this in general.

\begin{ex}\label{ex:zeta-function-mon} \
\begin{enumerate}[wide, labelindent=0pt] 
 	\item[(i)] The irreducible plane curve given by $(x_1^2-x_0^3)^2-x_0^5x_1 = 0$ has $(4,6,13)$ as minimal generating set of its semigroup, and leads to the space monomial curve $Y_1 \subseteq \C^3$ defined in three variables $(g=2$) by 
 		\[\left\{\begin{array}{r c l l}
 			x_1^2 & - & x_0^3  &  = 0 \\
 			x_2^2  &- & x_0^5x_1 &= 0.  \\
  		\end{array}\right.\]
  	The expression for the monodromy zeta function in Theorem~\ref{thm:zeta-function-mon-Y} gives \[Z^{mon}_{Y_1,p_1}(t) = \frac{(1-t^2)^2(1-t^6)(1-t^{13})}{(1-t^6)^2(1-t^{26})} = \frac{(1-t^2)^2(1-t^{13})}{(1-t^6)(1-t^{26})}.\] In~\cite[Example 4.1]{MVV}, it was shown that the motivic Igusa zeta function of $Y_1$ has three poles: $\L^2,\L^{\frac{8}{6}}$ and $\L^{\frac{37}{26}}$. Every pole $\L^{-s_0}$ of these three induces a monodromy eigenvalue $e^{2\pi i s_0}$: $e^{-4\pi i}$ is a zero of $Z^{mon}_{Y_1,p_1}(t)$, while $e^{\frac{-8 \pi i}{3}}$ and $e^{\frac{-37\pi i}{13}}$ are poles of $Z^{mon}_{Y_1,p_1}(t)$.
  	\item[(ii)] Consider the space monomial curve $Y_2\subseteq \C^4$ associated with the plane curve defined by $((x_1^2-x_0^3)^2 - x_0^5x_1)^2 - x_0^{10}(x_1^2-x_0^3) = 0$, whose semigroup is minimally generated by $(8,12,26,53)$. Its equations are given by
 		\[\left\{ \begin{array}{r c l l}
 			x_1^2 & - & x_0^3  &  = 0 \\
 			x_2^2  &- & x_0^5x_1 &= 0 \\
 			x_3^2 & - &x_0^{10}x_2 &=0.
  		\end{array}\right.\]  
 	Using Theorem~\ref{thm:zeta-function-mon-Y}, we find \[Z^{mon}_{Y_2,p_2}(T) = \frac{(1-t^2)^4(1-t^6)^2(1-t^{26})(1-t^{53})}{(1-t^6)^4(1-t^{26})^2(1-t^{106})} = \frac{(1-t^2)^4(1-t^{53})}{(1-t^6)^2(1-t^{26})(1-t^{106})}.\] The poles of the motivic zeta function of $Y_2$ were also computed in~\cite[Example 4.1]{MVV}: they are given by $\L^{3}, \L^{\frac{11}{6}}, \L^{\frac{50}{26}},$ and $\L^{\frac{235}{106}}$. Similarly as in the previous example, it is easy to see that they all induce eigenvalues of monodromy associated with $Y_2$.
\end{enumerate}
\end{ex}


\section{The monodromy conjecture for a space monomial curve} \label{MonConj}

This last section consists of a proof of the main result in this article, namely the monodromy conjecture for a space monomial curve $Y \subset \C^{g+1}$ with $g \geq 2$. In other words, we will show that every pole $\L^{-s_0}$ of the motivic Igusa zeta function associated with $Y$ yields a monodromy eigenvalue $e^{2\pi is_0}$ of $Y$.

\vspace{14pt}

In~\cite{MVV}, it was shown that a complete list of poles of both the local and global motivic Igusa zeta function of a space monomial curve $Y \subset \C^{g+1}$ is given by \[\L^g, \qquad \L^{\frac{\nu_k}{N_k}}, \qquad k = 1,\ldots, g,\] where
 	\begin{equation}\label{eq:poles-motivic}
 		\frac{\nu_k}{N_k} = \frac{1}{n_k\lbeta_k}\bigg(\sum_{l=0}^k \lbeta_l - \sum_{l=1}^{k-1}n_l\lbeta_l\bigg) + (k-1) + \sum_{l=k+1}^g\frac{1}{n_l},  
 	\end{equation}
and $N_k = \lcm(\frac{\lbeta_k}{e_k},n_k,\ldots, n_g)$. Clearly, the first pole, $\L^g$, and the poles $\L^{\frac{\nu_k}{N_k}}$ with $\frac{\nu_k}{N_k} \in \N$ induce the trivial monodromy eigenvalue $1$. We claim that for every $k= 1,\ldots,g$ with $\frac{\nu_k}{N_k} \notin \N$, the candidate monodromy eigenvalue $e^{-2\pi i\frac{\nu_k}{N_k}}$ is a pole of the monodromy zeta function of $Y$ computed in the previous section.

\begin{remark}
It is possible that $\frac{\nu_k}{N_k}$ is an integer for some $k \in \{1,\ldots, g\}$; for example, the space monomial curve $Y \subset \C^3$ defined by 
 	\[\left\{\begin{array}{r c l l}
 		x_1^2 & - & x_0^3  &  = 0 \\
 		x_2^6  &- & x_0^{17}x_1 &= 0  \\
  	\end{array}\right.\]
corresponds to the generators $(12,18,37)$ with $\frac{\nu_1}{N_1} = 1$.
\end{remark}

To prove this claim, we will not work directly with the monodromy zeta function of $Y$ at the point $p = S' \cap \sigma^{-1}(0)$, but we will again consider $Y$ as the Cartier divisor $\{f_1 = 0\}$ on a generic surface $S$. All the interesting information is contained in the characteristic polynomial $\Delta(t)$ of the monodromy transformation on $\mathcal H^1(\psi_{f_1}\C^{\boldsymbol{\cdot}})_0$. From Theorem~\ref{thm:zeta-function-mon-Y}, it follows that \[\Delta(t) = \frac{(t-1)\prod\limits_{k = 1}^g(t^{N_k}-1)^{\frac{n_k\lbeta_k}{N_k}}}{\prod\limits_{k = 0}^g(t^{M_k}-1)^{\frac{\lbeta_k}{M_k}}}\] is a polynomial of degree $\mu = 1 + \sum_{k=1}^g(n_k-1)\lbeta_k - \lbeta_0 > 0$. Hence, if we show that the candidate monodromy eigenvalue $e^{-2\pi i\frac{\nu_k}{N_k}} \neq 1$ is a zero of $\Delta(t)$, then it will be an eigenvalue of monodromy associated with $Y \subset \C^{g+1}$ at the generic point $p = S' \cap \sigma^{-1}(0)$.

\begin{theorem}\label{thm:mon-conj}
Let $Y \subset \C^{g+1}$ be a space monomial curve defined by the equations~\eqref{eq:equations-Y} with $g \geq 2$, and denote by $\sigma: X' \rightarrow \C^{g+1}$ the blow-up of $\C^{g+1}$ with center $Y$. Every pole $\L^{-s_0}$ of the local or global motivic Igusa zeta function associated with $Y$ induces a monodromy eigenvalue $e^{2\pi i s_0}$ of $Y$ at a point in $\sigma^{-1}(B \cap Y)$ for $B$ a small ball around $0$.
\end{theorem}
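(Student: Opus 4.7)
Fix $k \in \{1,\ldots,g\}$ with $\nu_k/N_k \notin \Z$ and set $\zeta_k := e^{-2\pi i \nu_k/N_k}$. As observed in the paragraph preceding the theorem, it suffices to show that $\zeta_k$ is a zero of the characteristic polynomial
\[\Delta(t) = \frac{(t-1)\prod_{j=1}^g (t^{N_j}-1)^{n_j\lbeta_j/N_j}}{\prod_{j=0}^g (t^{M_j}-1)^{\lbeta_j/M_j}},\]
because then $\zeta_k$ is an eigenvalue of the monodromy action on $\mathcal H^1(\psi_{f_1}\C^{\boldsymbol{\cdot}})_0$ at $p \in \sigma^{-1}(0) \subset \sigma^{-1}(B \cap Y)$, giving both the local and global versions at once. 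The remaining poles $\L^g$ and $\L^{\nu_k/N_k}$ with $\nu_k/N_k \in \Z$ induce the trivial eigenvalue $1$, which is always a monodromy eigenvalue (since $\mu > 0$).

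Let $d_k := \gcd(N_k,\nu_k)$ and $N_k' := N_k/d_k$, so that $\zeta_k$ is a primitive $N_k'$-th root of unity with $N_k' > 1$. A factor $(t^m-1)^e$ contributes $e$ to $\mathrm{ord}_{t=\zeta_k}\Delta(t)$ if $N_k' \mid m$ and $0$ otherwise; since $\zeta_k \neq 1$, the factor $(t-1)$ does not contribute. The task therefore reduces to the strict inequality
\[\sum_{\substack{1\leq j \leq g \\ N_k' \mid N_j}} \frac{n_j \lbeta_j}{N_j} \;>\; \sum_{\substack{0\leq j \leq g \\ N_k' \mid M_j}} \frac{\lbeta_j}{M_j}.\]

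My plan is to first carry out an arithmetic analysis of $N_k'$. Multiplying~\eqref{eq:poles-motivic} through by $n_k\lbeta_k N_k$ and using that $n_l \mid N_k$ for $l \geq k$ (by the definition $N_k = \lcm(\lbeta_k/e_k, n_k, \ldots, n_g)$), the non-integrality of $\nu_k/N_k$ must come from the factor $\lbeta_k/e_k$ inside $N_k$. I expect to show, using property (iii) $\gcd(\lbeta_k/e_k, n_k) = 1$ and the divisibilities $n_l \mid \lbeta_j$ for $l > j$, that some non-trivial divisor of $\lbeta_k/e_k$ always survives in $N_k'$. From this I would read off exactly which $N_j$ and $M_j$ are divisible by $N_k'$: the divisibility $N_k' \mid M_j$ for $j \geq k$ is obstructed by the surviving part of $\lbeta_k/e_k$, whereas $N_k' \mid N_j$ for $j = k$ holds trivially and typically only for $j=k$ among the numerator terms. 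The numerator then dominates because it contains at least the term $n_k\lbeta_k/N_k$, which, combined with property (iv) $n_j\lbeta_j < \lbeta_{j+1}$ used to bound any surviving $j>k$ contributions on both sides, gives the strict inequality.

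The hardest step will be the sharp comparison of the potentially competing terms, namely $j = k$ in the numerator against $j = k-1$ and $j = k$ in the denominator, where the lcm's $M_{k-1}$, $M_k$ and $N_k$ are closely related via $N_k = \lcm(M_k, n_k)$ and $M_{k-1} = \lcm(\lbeta_{k-1}/e_{k-1}, n_k, \ldots, n_g)$. To manage this, I would rewrite the inequality by isolating the common factor $\lcm(n_k,\ldots,n_g) = e_{k-1}/\gcd(\cdots)$ and reduce the problem to a numerical identity in the generators $(\lbeta_0,\ldots,\lbeta_g)$, then verify it by induction on $g-k$ using the recursion $\lbeta_{k+1} > n_k\lbeta_k$ and the relation~\eqref{eq:ni-betai}. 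The two worked examples in Example~\ref{ex:zeta-function-mon}, where one sees precisely this cancellation pattern, suggest that the proof will go through with a case split according to whether $N_k'$ is divisible by $\lbeta_k/e_k$ in full or only partially.
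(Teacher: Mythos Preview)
Your proposal is a sketch, not a proof, and at least one of your stated expectations is false. You write that ``the divisibility $N_k' \mid M_j$ for $j \geq k$ is obstructed by the surviving part of $\lbeta_k/e_k$'', but in fact $N_k' \mid M_k$ can occur: when $\zeta_k$ is a root of $t^{M_k}-1$ one deduces (from~\eqref{eq:poles-motivic} and $e_{k-1}\frac{\lbeta_k}{e_k} \mid \lbeta_l M_k$ for $l<k$) that $n_k \mid M_k$, hence $N_k = M_k$, so the denominator term $(t^{M_k}-1)^{\lbeta_k/M_k}$ genuinely competes with the numerator. Likewise, your claim that a nontrivial divisor of $\lbeta_k/e_k$ always survives in $N_k'$ is not established and need not hold; $N_k'$ can divide $\lcm(n_k,\ldots,n_g)$ instead. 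Finally, your plan to compare only $j=k$ in the numerator against $j=k-1,k$ in the denominator has no justification: without further structure, every index $j$ with $N_k' \mid M_j$ or $N_k' \mid N_j$ contributes, and you have given no argument ruling out the others.

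The device you are missing is a telescoping factorisation that localises the problem to a single index. Set $L_k := \lcm(n_k,\ldots,n_g)$ for $k=1,\ldots,g$ and $L_{g+1}:=1$; using $\gcd(M_k,L_k)=L_{k+1}$ (since $\gcd(\lbeta_k/e_k,n_k)=1$) and $\gcd(\lbeta_k/M_k,\,e_{k-1}/L_k)=e_k/L_{k+1}$, one checks that
\[
P_k(t) := \frac{(t^{N_k}-1)^{n_k\lbeta_k/N_k}\,(t^{L_{k+1}}-1)^{e_k/L_{k+1}}}{(t^{M_k}-1)^{\lbeta_k/M_k}\,(t^{L_k}-1)^{e_{k-1}/L_k}}
\]
is a polynomial (it has the shape $\frac{(t^a-1)^p(t^{\gcd(b,c)}-1)^{\gcd(q,r)}}{(t^b-1)^q(t^c-1)^r}$ with $b,c\mid a$ and $q,r\mid p$), and that $\Delta(t)=\prod_{k=1}^g P_k(t)$ because the $L$-factors telescope: $M_0=L_1$, $\lbeta_0/M_0=e_0/L_1$, and $(t^{L_{g+1}}-1)^{e_g/L_{g+1}}=t-1$. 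Now it suffices to show $\zeta_k$ is a zero of the single factor $P_k$, and there are only four cases according to whether $\zeta_k$ is a root of $t^{M_k}-1$ and/or $t^{L_k}-1$. In each case the required multiplicity inequality (e.g.\ $\frac{n_k\lbeta_k}{N_k}>\frac{\lbeta_k}{M_k}$ when $N_k=M_k$, or $\frac{n_k\lbeta_k}{N_k}>\frac{e_{k-1}}{L_k}$ using $\lbeta_k>\big|\sum_{l<k}\lbeta_l-\sum_{l<k}n_l\lbeta_l\big|$) follows from elementary estimates based on $n_l\geq 2$ and $\lbeta_{l+1}>n_l\lbeta_l$. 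This is the step your outline gestures at but does not supply; without the factorisation, the global divisibility bookkeeping you propose does not close.
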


\begin{proof}
It remains to show that every $\lambda_k := e^{-2\pi i\frac{\nu_k}{N_k}}$ for $k = 1,\ldots, g$ with $\frac{\nu_k}{N_k} \notin \N$ is a zero of the characteristic polynomial. To this end, we will write $\Delta(t)$ as the product of $g$ polynomials of which each has one of the elements $\lambda_k$ as a zero. More precisely, we will write $\Delta(t)$ as a product of polynomials of the form \[\frac{(t^a-1)^p\cdot(t^{\gcd(b,c)} - 1)^{\gcd(q,r)}}{(t^b-1)^q \cdot (t^c-1)^r},\] where $a,b,c,p,q$ and $r$ are positive integers with $b,c \mid a$  and $q,r \mid p$. For this purpose, let $L_k := \lcm(n_k,\ldots, n_g)$ for $k = 1,\ldots, g$ and let $L_{g+1} := 1$. With the definitions $N_k = \lcm(\frac{\lbeta_k}{e_k},n_k,\ldots, n_g)$ and $M_k = \lcm(\frac{\lbeta_k}{e_k},n_{k+1},\ldots, n_g)$, it is easy to see that $M_k, L_k \mid N_k$ and that $\frac{\lbeta_k}{M_k}, \frac{e_{k-1}}{L_k} \mid \frac{n_k\lbeta_k}{N_k}$ for all $k = 1,\ldots, g$. Furthermore, we have for all $k = 1,\ldots, g$ that \[\gcd(M_k,L_k) =\lcm\Big(L_{k+1},\gcd\big(\frac{\lbeta_k}{e_k},n_k\big)\Big) = L_{k+1},\] where we used in the first equality the general property $\gcd(\lcm(\alpha,\gamma),\lcm(\alpha,\delta)) = \lcm(\alpha,\gcd(\gamma,\delta))$, and in the second equality the fact that $\gcd(\frac{\lbeta_k}{e_k},n_k)= 1$, see Section~\ref{SpaceMonomial}. Finally, using the relation~\eqref{eq:rel-gcd-lcm} and $\gcd(\lbeta_k,e_{k-1}) = e_k$, we see for $k = 1,\ldots, g$ that \[\gcd\Big(\frac{\lbeta_k}{M_k},\frac{e_{k-1}}{L_k}\Big) = \gcd\Big(e_k,\frac{\lbeta_k}{n_{k+1}},\ldots, \frac{\lbeta_k}{n_g},\frac{e_{k-1}}{n_k},\ldots, \frac{e_{k-1}}{n_g}\Big) = \gcd\Big(\frac{e_k}{n_{k+1}},\ldots, \frac{e_k}{n_g}\Big) = \frac{e_k}{L_{k+1}}.\] All this together implies for each $k = 1,\ldots, g$  that \[P_k(t) := \frac{(t^{N_k} - 1)^{\frac{n_k\lbeta_k}{N_k}} \cdot (t^{L_{k+1}} - 1)^{\frac{e_k}{L_{k+1}}}}{(t^{M_k} - 1)^{\frac{\lbeta_k}{M_k}} \cdot (t^{L_k} - 1)^{\frac{e_{k-1}}{L_k}}}\] is a polynomial of the above form. It is also easy to see that $\Delta(t) = \prod_{k=1}^g P_k(t)$.

\vspace{14pt}

Fix now some $k \in \{1,\ldots, g\}$. We prove that $\lambda_k = e^{-2\pi i\frac{\nu_k}{N_k}}$ is a zero of $P_k(t)$. Clearly, it is a zero of $t^{N_k} - 1$, but we still need to show that this candidate zero does not get canceled with the denominator. To show this, we distinguish the following four cases.  
\begin{enumerate}[wide, labelindent=0pt]
	\item[(i)] The candidate zero $\lambda_k$ is not a zero of $t^{M_k}-1 = 0$, nor of $t^{L_k} - 1 = 0$: trivially, the candidate zero $\lambda_k$ is not canceled in $P_k(t)$.
	\item[(ii)] The candidate zero $\lambda_k$ is a zero of $t^{M_k}-1 = 0$, but not of $t^{L_k} - 1 = 0$: in this case, it is sufficient to prove that $\frac{n_k\lbeta_k}{N_k} > \frac{\lbeta_k}{M_k}$ in order to conclude that $\lambda_k$ is a zero of $P_k(t)$. Because $\lambda_k = e^{-2\pi i\frac{\nu_k}{N_k}}$ is a zero of $t^{M_k}-1 = 0$, we know that $\frac{\nu_kM_k}{N_k}$ is an integer. Using the expression~\eqref{eq:poles-motivic} for $\frac{\nu_k}{N_k}$, one can see that this implies that $n_k\lbeta_k \mid (\sum_{l=0}^k \lbeta_l - \sum_{l=1}^{k-1}n_l\lbeta_l)M_k$, which in turn implies, using $n_k\lbeta_k = e_{k-1}\frac{\lbeta_k}{e_k} \mid \lbeta_lM_k$ for $l = 0,\ldots, k-1$, that $n_k \mid M_k$. We can conclude that $N_k = M_k$, and, hence, we indeed have that $\frac{n_k\lbeta_k}{N_k} > \frac{\lbeta_k}{M_k}$ as $n_k > 1$.
	\item[(iii)] The candidate zero $\lambda_k$ is a zero of $t^{L_k} - 1 = 0$, but not of $t^{M_k}-1 = 0$: as in the previous case, it is enough to show that $\frac{n_k\lbeta_k}{N_k} > \frac{e_{k-1}}{L_k}$. From $\lambda_k$ being a zero of $t^{L_k} - 1 = 0$, one can now deduce that $\frac{\lbeta_k}{e_k} \mid (\sum_{l=0}^{k-1} \lbeta_l - \sum_{l=1}^{k-1}n_l\lbeta_l)\frac{L_k}{e_{k-1}}$. Because $e_{k-1} \mid \lbeta_l$ for $l = 0,\ldots, k-1$, it follows that $N_k = \lcm(\frac{\lbeta_k}{e_k},L_k) \mid (\sum_{l=0}^{k-1} \lbeta_l - \sum_{l=1}^{k-1}n_l\lbeta_l)\frac{L_k}{e_{k-1}}$, and, thus, that \[\frac{1}{N_k} \geq \frac{1}{\left\vert \sum_{l=0}^{k-1} \lbeta_l - \sum_{l=1}^{k-1}n_l\lbeta_l \right\vert} \frac{e_{k-1}}{L_k} = \left \{\begin{array}{cl}
			\frac{1}{L_1} & \text{for } k=1  \\
			\frac{1}{-\lbeta_0 + \sum_{l=1}^{k-1}(n_l-1)\lbeta_l}\frac{e_{k-1}}{L_k}   & \text{for } k = 2,\ldots, g.
		\end{array}\right. \]
 	The equality comes from the fact that $-\lbeta_0 + (n_1 - 1)\lbeta_1 = n_1\lbeta_1(1 - \frac{1}{n_0} - \frac{1}{n_1}) > 0$ since $n_0, n_1 \geq 2$ are coprime. We can finish this case by using that $\lbeta_1 >  \lbeta_0=e_0$ and $\lbeta_k > -\lbeta_0 + \sum_{l=1}^{k-1}(n_l-1)\lbeta_l$ for $k = 2,\ldots, g$, which follows from $\lbeta_i > n_{i-1}\lbeta_{i-1}$ for $i = 2,\ldots, k$.
	\item[(iv)] The candidate zero $\lambda_k$ is a zero of both $t^{L_k} - 1 = 0$ and $t^{M_k}-1 = 0$: in this last case, the candidate zero $\lambda_k$ is also a zero of $t^{L_{k+1}} - 1 = 0$ and we need to show that $\frac{n_k\lbeta_k}{N_k} + \frac{e_k}{L_{k+1}} - \frac{\lbeta_k}{M_k} - \frac{e_{k-1}}{L_k} > 0$. Combining case (ii) and (iii), we know that \[\frac{n_k\lbeta_k}{N_k} + \frac{e_k}{L_{k+1}} - \frac{\lbeta_k}{M_k} - \frac{e_{k-1}}{L_k} \geq \frac{(n_k-1)\lbeta_k}{\left\vert \sum_{l=0}^{k-1} \lbeta_l - \sum_{l=1}^{k-1}n_l\lbeta_l \right\vert}\frac{e_{k-1}}{L_k}  + \frac{e_k}{L_{k+1}} - \frac{e_{k-1}}{L_k}, \] which is positive as one can, similarly as in case (iii), see that $(n_k-1)\lbeta_k \geq \lbeta_k > \left\vert \sum_{l=0}^{k-1} \lbeta_l - \sum_{l=1}^{k-1}n_l\lbeta_l \right\vert$ for $k = 1,\ldots,g$.
\end{enumerate}
Hence, every $\lambda_k$ is a zero of $P_k(t)$, and consequently, an eigenvalue of monodromy.
\end{proof}

\begin{remark}
In the proof of Theorem~\ref{thm:mon-conj}, the pole $\lambda_g =  e^{-2\pi i\frac{\nu_g}{N_g}}$ could have been treated way easier. More precisely, the candidate zero $\lambda_g$ is never a zero of the denominator of $P_g(t)$, and we are always in case (i). Indeed, in case (ii), we would have that $n_g \mid M_g = \lbeta_g$, which is impossible. Likewise, in case (iii), we would have the impossible property $\lbeta_g \mid \sum_{l=0}^{g-1} \lbeta_l - \sum_{l=1}^{g-1}n_l\lbeta_l$ because $\lbeta_g > \vert\sum_{l=0}^{g-1} \lbeta_l - \sum_{l=1}^{g-1}n_l\lbeta_l\vert = -\lbeta_0 + \sum_{l=1}^{g-1}(n_l-1)\lbeta_l$. For smaller $k$, however, it is possible that $\lambda_k$ is a zero of the denominator. For instance, we can consider the curve $Y_1$ from Example~\ref{ex:zeta-function-mon} whose characteristic polynomial $\Delta(t)$ is written as the product $P_1(t) \cdot P_2(t)$ where \[P_1(t) = \frac{(t^6-1)^2(t^2-1)}{(t^6-1)(t^2-1)^2}, \qquad P_2(t) = \frac{(t^{26}-1)(t-1)}{(t^{13}-1)(t^2-1)}.\] For $\lambda_1 = e^{\frac{-8\pi i}{3}}$, we are in case (ii): it is a zero of the first term of the denominator of $P_1(t)$, but not of the second. One can also find examples in which some candidate zero $\lambda_k$ for $k < g$ is in case (iii) or (iv).
\end{remark}

One can also investigate the monodromy conjecture for the related \emph{topological} and \emph{$p$-adic Igusa zeta function}, which are specializations of the motivic Igusa zeta function. See for instance~\cite{DL1} and~\cite{Den1}, respectively, for their expressions in terms of an embedded resolution for one polynomial, and~\cite{VZ} for their generalizations to ideals. Since the monodromy conjecture for the motivic zeta function implies the conjecture for the other two zeta functions, we have simultaneously shown all these monodromy conjectures for our monomial curves. 



\begin{thebibliography}{99}

\bibitem[A'Ca]{AC}
{N. A'Campo}, `La fonction z\^{e}ta d'une monodromie', {\em Comment. Math. Helv.} 50 (1975), 233--248.
%
\bibitem[AGS]{AGS}
{F. Aroca, M. {G\'{o}mez-Morales} \and K. Shabbir}, `Torical modification of {N}ewton non-degenerate ideals', {\em Rev. R. Acad. Cienc. Exactas F\'{i}s. Nat. Ser. A Mat. RACSAM} 107 no. 1 (2013), 221--239.
%
\bibitem[AM]{AM}
{S. S. Abhyankar \and T. T. Moh}, `Newton-{P}uiseux expansion and generalized {T}schirnhausen transformation {I}, {II}', {\em J. Reine Angew. Math.} 260 (1973), 47--83; {\em ibid.} 261 (1973), 29--54.
%
\bibitem[AMO1]{AMO1}
{E. {Artal Bartolo}, J. {Mart\'{\i}n-Morales}, \and J. {Ortigas-Galindo}}, `Cartier and {W}eil divisors on varieties with quotient singularities', {\em Internat. J. Math.} 25 no. 11 (2014), 1450100.
%
\bibitem[AMO2]{AMO2}
{E. {Artal Bartolo}, J. {Mart\'{\i}n-Morales}, \and J. {Ortigas-Galindo}}, `Intersection theory on abelian-quotient {$V$}-surfaces and {$\Q$}-resolutions', {\em J. Singul.} 8 (2014), 11--30.
%
\bibitem[Aze]{Az}
{A. Azevedo}, {\em On the {J}acobian ideal of a plane curve} (PhD thesis, Purdue University, 1967).
%
\bibitem[BV]{BV}
{B. Bories \and W. Veys}, `Igusa's {$p$}-adic local zeta function and the monodromy conjecture for non-degenerate surface singularities', {\em Mem. Amer. Math. Soc.} 242 no. 1145 (2016).
%
\bibitem[Del]{De}
{P. Deligne}, `Le formalisme des cycles \'evanescents, {C}omparaison avec la th\'eorie transcendante', in {\em SGA7, Groupes de monodromie en g\'{e}om\'{e}trie alg\'{e}brique. {II}}, Lecture Notes in Mathematics 340 (Springer-Verlag, Berlin-New York, 1973), 82--115, 116--164.
%
\bibitem[Den1]{Den1}
{J. Denef}, 'On the degree of {I}gusa's local zeta function', {\em Amer. J. Math.} 109 no. 6 (1987), 991--1008.
%
\bibitem[Den2]{Den2}
{J. Denef}, `Degree of local zeta functions and monodromy', {\em Compositio Math.} 89 no. 2 (1993), 207--216.
%
\bibitem[Dim]{Di}
{A. Dimca}, {\em Sheaves in topology}, Universitext (Springer-Verlag, Berlin, 2004).
%
\bibitem[DL1]{DL1}
{J. Denef \and F. Loeser}, `Caract\'{e}ristiques d'{E}uler-{P}oincar\'{e}, fonctions z\^{e}ta locales et modifications analytiques', {\em J. Amer. Math. Soc. }5 no. 4 (1992), 705--720.
%
\bibitem[DL2]{DL2}
{J. Denef \and F. Loeser}, `Motivic {I}gusa zeta functions', {\em J. Algebraic Geom.} 7 no. 3 (1998), 505--537.
%
\bibitem[Dol]{Dol}
{I. Dolgachev}, `Weighted projective varieties', in {\em Group actions and vector fields ({V}ancouver, {B}.{C}.,
  1981)}, Lecture Notes in Math. 956 (Springer-Verlag, Berlin, 1982), 34--71.
%
\bibitem[EH]{EH}
{D. Eisenbud \and J. Harris}, {\em The geometry of schemes}, Graduate Texts in Mathematics 197 (Springer-Verlag, New York, 2000).
%
\bibitem[Har]{Ha}
{R. Hartshorne}, {\em Algebraic geometry}, Graduate Texts in Mathematics 52 (Springer-Verlag, New York-Heidelberg, 1977).
%
\bibitem[Hir]{Hi}
{H. Hironaka}, `Resolution of singularities of an algebraic variety over a field of characteristic zero. {I}, {II}', {\em Ann. of Math.} 2 no. 79 (1964), 109--203, 205--326.
%
\bibitem[HMY]{HMY}
{J. Howald, M. Musta\c{t}\u{a} \and C. Yuen}, `On {I}gusa zeta functions of monomial ideals', {\em Proc. Amer. Math. Soc.} 135 no. 11 (2007), 3425--3433.
%
\bibitem[Jou]{J}
{J.-P. Jouanolou}, {\em Th\'{e}or\`emes de {B}ertini et applications}, Progress in Mathematics 42 (Birkh\"{a}user Boston, Inc., Boston, MA, 1983).
%
\bibitem[Loe]{L}
{F. Loeser}, `Fonctions d'{I}gusa {$p$}-adiques et polyn\^{o}mes de {B}ernstein', {\em Amer. J. Math.} 110 no. 1 (1988), 1--21.
%
\bibitem[Mar1]{Ma1}
{J. {Mart\'{\i}n-Morales}}, `Monodromy zeta function formula for embedded {$\Q$}-resolutions', {\em Rev. Mat. Iberoam.} 29 no. 3 (2013), 939--967.
%
\bibitem[Mar2]{Ma2}
{J. {Mart\'{\i}n-Morales}}, `Semistable reduction of a normal crossing {$\Q$}-divisor', {\em Ann. Mat. Pura Appl.} 195 no. 5 (2016), 1749--1769.

\bibitem[Mil]{Mi}
{J. Milnor}, {\em Singular points of complex hypersurfaces}, Annals of Mathematics Studies 61 (Princeton University Press, Princeton, N.J.; University of Tokyo Press, Tokyo, 1968).
%
\bibitem[MMVV]{MMVV}
{J. {Mart\'\i n-Morales}, H. Mourtada, W. Veys \and L. Vos}, `Note on the monodromy conjecture for a space monomial curve with a plane semigroup', {\em Comptes Rendus. Math\'ematique} 358 no. 2 (2020), 177--187.
%
\bibitem[Mus]{Mu}
{M. Musta\c{t}\u{a}}, `Bernstein-{S}ato polynomials for general ideals vs. principal ideals' (preprint, arXiv:1906.03086, 2019).
%
\bibitem[MV]{MV}
{J. {Mart\'{\i}n-Morales} \and L. Vos}, `Normal surface singularities with an integral homology sphere link related to space monomial curves with a plane semigroup', (preprint, arXiv:2008.10918v2, 2020).
%
\bibitem[MVV]{MVV}
{H. Mourtada, W. Veys, \and L. Vos}, `The motivic {I}gusa zeta function of a space monomial curve with a
  plane semigroup', to appear in {\em Adv. in Geom.} (arXiv:1903.02354, 2019).
%
\bibitem[Sat]{Sa}
{I. Satake}, `On a generalization of the notion of manifold', {\em Proc. Nat. Acad. Sci. U.S.A.} 42 (1956), 359--363.
%
\bibitem[Spi]{Sp}
{M. Spivakovsky}, `Valuations in function fields of surfaces', {\em Amer. J. Math.} 112 no. 1 (1990), 107--156.
%
\bibitem[Ste]{Ste}
{J. H. M. Steenbrink}, `Mixed {H}odge structure on the vanishing cohomology', in {\em Real and complex singularities}, {P}roc. {N}inth {N}ordic {S}ummer {S}chool/{NAVF} {S}ympos. {M}ath. {O}slo 1976 (Sijthoff and Noordhoff, Alphen aan den Rijn, 1977), 525--563.
%
\bibitem[Tei1]{T1}
{B. Teissier}, Appendix to {O.} {Z}ariski's course {\em The moduli problem for plane branches}, University Lecture Series 39 (Amer. Math. Soc., Providence, RI, 2006).
%
\bibitem[Tei2]{T2}
{B. Teissier}, `Overweight deformations of affine toric varieties and local uniformization', in {\em Valuation theory in interaction}, EMS Ser. Congr. Rep. (Eur. Math. Soc., Z\"{u}rich, 2014), 474--565.
%
\bibitem[Tev1]{Te1}
{J. Tevelev}, `Compactifications of subvarieties of tori', {\em Amer. J. Math.} 129 no. 4 (2007), 1087--1104.
%
\bibitem[Tev2]{Te2}
{J. Tevelev}, `On a question of {B}. {T}eissier', {\em Collect. Math.} 65 no. 1 (2014), 61--66.
%
\bibitem[Ver]{Ver}
{J.-L. Verdier}, `Sp\'ecialisation de faisceaux et monodromie mod\'er\'ee', in {\em Analysis and topology on singular spaces, {II}, {III} ({L}uminy, 1981)}, Ast\'erisque 101 (Soc. Math. France, Paris, 1983), 332--364.
%
\bibitem[Vey]{Vey}
{W. Veys}, `Zeta functions for curves and log canonical models', {\em Proc. London Math. Soc.} 74 no. 2 (1997), 360--378.
%
\bibitem[VV]{VV}
{L. {Van Proeyen} \and W. Veys}, `The monodromy conjecture for zeta functions associated to ideals in
  dimension two', {\em Ann. Inst. Fourier (Grenoble) }60 no. 4 (2010), 1347--1362.
%
\bibitem[VZ]{VZ}
{W. Veys \and W. A. {Z\'{u}\~{n}iga-Galindo}}, `Zeta functions for analytic mappings, log-principalization of ideals, and {N}ewton polyhedra', {\em Trans. Amer. Math. Soc. }360 no. 4 (2008), 2205--2227.
%
\bibitem[Zar]{Z}
{O. Zariski}, {\em The moduli problem for plane branches}, University Lecture Series 39 (Amer. Math. Soc., Providence, RI, 2006).
\end{thebibliography}
\end{document}